\UseAllTwocells \xyoption{frame} \CompileMatrices
\newtheorem{prop}{Proposition}[section]
\newtheorem{lem}[prop]{Lemma}
\newtheorem{cor}[prop]{Corollary}
\newtheorem{thm}[prop]{Theorem}
\newtheorem{rmk}[prop]{Remark}
\newtheorem{example}[prop]{Example}
\newtheorem{defn}[prop]{Definition}
\newenvironment{pf}{\begin{trivlist}\item[]{\sc Proof.}}%
            {\nolinebreak $\Box$ \end{trivlist}}
\newcommand{\noprint}[1]{}
\newcommand{\zz}{{\mathbb Z}}
\newcommand{\aaa}{{\mathbb A}}
\newcommand{\dd}{{\mathbb D}}
\renewcommand{\ll}{{\mathbb L}}
\newcommand{\qq}{{\mathbb Q}}
\newcommand{\pp}{{\mathbb P}}
\newcommand{\cc}{{\mathbb C}}
\newcommand{\sM}{\mathcal{M}}
\newcommand{\sO}{\mathcal{O}}
\newcommand{\sX}{\mathcal{X}}
\newcommand{\sT}{\mathcal{T}}
\newcommand{\sY}{\mathcal{Y}}
\newcommand{\sZ}{\mathcal{Z}}
\newcommand{\sF}{\mathcal{F}}
\newcommand{\sC}{\mathcal{C}}
\newcommand{\sP}{\mathcal{P}}
\newcommand{\sA}{\mathcal{A}}
\newcommand{\sB}{\mathcal{B}}
\newcommand{\sQ}{\mathcal{Q}}
\newcommand{\bP}{\mathbb{P}}
\newcommand{\sTT}{{~^{0}\sT}}
\newcommand{\ssT}{{~^{-1}\sT}}
\newcommand{\sFF}{{~^{0}\sF}}
\newcommand{\ssF}{{~^{-1}\sF}}
\newcommand{\sPA}{{~^{p}\sA}}
\newcommand{\sPF}{{~^{p}\sF}}
\newcommand{\sPT}{{~^{p}\sT}}
\newcommand{\sQT}{{~^{q}\sT}}
\newcommand{\sPB}{{~^{p}\sB}}
\newcommand{\sPM}{{~^{p}\sM}}
\newcommand{\srH}{{\mathscr H}}
\newcommand{\srT}{{~^{p}\mathscr T}}
\newcommand{\srPH}{{~^{p}\srH}}
\newcommand{\smQT}{{~^{q}\mathfrak{T}}}
\newcommand{\smPF}{{~^{p}\mathfrak{F}}}
\newcommand{\pDelta}{{~^{p}\Delta}}
\DeclareMathOperator{\spec}{Spec}
\DeclareMathOperator{\Coh}{Coh}
\DeclareMathOperator{\Hom}{Hom}
\DeclareMathOperator{\Per}{Per}
\DeclareMathOperator{\Ch}{Ch}
\DeclareMathOperator{\Hilb}{Hilb}
\DeclareMathOperator{\PHilb}{~^{p}Hilb}
\DeclareMathOperator{\Supp}{Supp}
\DeclareMathOperator{\Coker}{Coker}
\DeclareMathOperator{\pCoker}{~^{p}Coker}
\DeclareMathOperator{\DT}{DT}
\DeclareMathOperator{\PT}{PT}
\DeclareMathOperator{\pDT}{~^{p}DT}
\DeclareMathOperator{\St}{St}
\DeclareMathOperator{\Sch}{Sch}
\DeclareMathOperator{\reg}{\tiny reg}
\DeclareMathOperator{\ssc}{\tiny{sc}}
\DeclareMathOperator{\sur}{\tiny{sur}}
\DeclareMathOperator{\exc}{\tiny{exc}}
\DeclareMathOperator{\Cone}{Cone}
\DeclareMathOperator{\Ad}{Ad}
\DeclareMathOperator{\FM}{FM}
\DeclareMathOperator{\red}{red}
\DeclareMathOperator{\id}{id}
\DeclareMathOperator{\Ext}{Ext}
\newcommand{\age}{\mathop{\rm age}}
\def\Label#1{\label{#1}{\tt [#1]}\phantom{h}}
\def\Label{\label}
\title[Donaldson-Thomas invariants under flops]{Donaldson-Thomas invariants of Calabi-Yau orbifolds under flops}
\author{Yunfeng Jiang}
\address{Department of Mathematics\\ University of Kansas\\ 405 Snow Hall, 1460 Jayhawk Blvd\\Lawrence, KS 66045}
\email{y.jiang@ku.edu}
\begin{document}
\sloppy \maketitle
\begin{abstract}
We study the Donaldson-Thomas type invariants for  the Calabi-Yau  threefold Deligne-Mumford stacks under flops. 
A crepant birational morphism between two smooth Calabi-Yau  threefold Deligne-Mumford stacks is called an orbifold flop if the flopping locus is the quotient of weighted projective lines by a cyclic group action. We prove that the Donaldson-Thomas invariants are preserved under orbifold flops.
\end{abstract}

\tableofcontents

\maketitle

\section{Introduction}

The goal of this paper is to prove a natural property that the Donaldson-Thomas (DT) type invariants of Calabi-Yau threefold Deligne-Mumford (DM)  stacks are preserved under orbifold flops. The techniques we use are Bridgeland's Hall algebra identities inside the motivic Hall algebra of some abelian categories, Joyce-Song's integration map from the motivic Hall algebra to the ring of functions on the  quantum torus, and Calabrese's method of Hall algebra identities under threefold  flops.

\subsection{Motivation and the DT-invariants}

Let $X$ be a proper smooth Calabi-Yau threefold. Fixing the topological data $(\beta,n)$ for $\beta\in H_2(X,\zz)$, and $n\in \zz$, the DT invariant $\DT_{n,\beta}$ is defined by the virtual count of the Hilbert scheme of curves on $X$ with topological data $(\beta, n)$:
$$\DT_{n,\beta}=\int_{[I_n(X,\beta)]^{virt}}1,$$
where $I_n(X,\beta)$ is the Hilbert scheme of curves $C$ on $X$ (the Donaldson-Thomas moduli space) such that 
$$[C]=\beta, \chi(\sO_{C})=n.$$
Here $[I_n(X,\beta)]^{virt}$ is the zero dimensional  virtual fundamental class of $I_n(X,\beta)$, constructed by R. Thomas in \cite{Thomas} since  the scheme $I_n(X,\beta)$ admits a perfect obstruction theory in the sense of \cite{BF}, \cite{LT}. 

In \cite{Behrend}, Behrend provides another way to the DT-invariants of Calabi-Yau threefolds, which are not necessarily proper. 
Behrend proves that the scheme  $I_n(X,\beta)$ admits a symmetric obstruction theory and if it is proper, the  virtual count is given by the weighted Euler characteristic:
$$\DT_{n,\beta}=\int_{[I_n(X,\beta)]^{virt}}1=\chi(I_n(X,\beta), \nu_I),$$
where $\nu_{I}: I_n(X,\beta)\to \zz$ is an integer valued constructible function which we  call the {\em Behrend function} of $I_n(X,\beta)$.  Behrend's theory works for any moduli schemes of objects on the derived category of coherent sheaves $D^b(X)$ admitting a symmetric obstruction theory.  This makes the DT-invariants into motivic invariants. 

A very important variation of DT-invariant is the Pandharipande-Thomas (PT) stable pair invariant.  
\begin{defn}\label{PT-stable:pair}(\cite{PT})
A stable pair 
$[\sO_{X}\stackrel{s}{\rightarrow} F]$ is a two-term complex in $D^b(X)$ satisfying:
\begin{enumerate}
\item $\dim(F)\leq 1$ and $F$ is pure;
\item $s$ has zero-dimensional cokernel.
\end{enumerate}
\end{defn}
The moduli scheme $\PT_n(X,\beta)$ of stable pairs with fixing topological data 
$[F]=\beta\in H_2(X,\zz), \chi(F)=n$ is  a scheme and the PT-invariant is defined by 
$$\PT_{n,\beta}(X)=\chi(\PT_n(X,\beta), \nu_{\PT}),$$
where $\nu_{\PT}$ is the Behrend function on  $\PT_n(X,\beta)$.  
Both DT-invariants and PT-invariants are curve counting invariants of $X$; The famous DT/PT-correspondence conjecture in  \cite{PT} equates these two invariants in terms of partition functions. 

The conjecture was proved by Bridgeland \cite{Bridgeland11}, and Toda \cite{Toda10} using the wall crossing idea, under which the DT-moduli space and the PT moduli space correspond to different (limit) stability conditions in Bridgeland's space of stability conditions. 

We pay more attention to Bridgeland's method for the proof.  In \cite{Bridgeland11} Bridgeland uses some Hall algebra identities in the motivic Hall algebra $H(\sA)$ of the abelian category of coherent sheaves $\sA=\Coh(X)$; such that the DT-moduli space and the PT-moduli space are both elements in the Hall algebra. Then applying the integration map as in \cite{JS}, \cite{Bridgeland10} Bridgeland gets the DT/PT-correspondence. 

The same idea works for threefold flops.  Let 
$$
\xymatrix{
X\ar@{-->}[rr]^{\phi}\ar[dr]_{\psi}&& X^\prime\ar[dl]^{\psi^\prime}\\
&Y&
}
$$
be a flopping contraction, such that $\psi, \psi^\prime$ all contract rational curves $\pp^1$ to singular points, with normal bundle 
$\sO_{\pp^1}(-1)\oplus \sO_{\pp^1}(-1)$.  The local model is the Atiyah flop. 
In  \cite{Calabrese} Calabrese studies and proves the flop formula of the DT-type invariants using the method of the Hall algebra identities and the integration map, generalizing the idea in \cite{Bridgeland11}. 

More precisely, for the flop $\phi: X\dasharrow X^\prime$, Bridgeland \cite{Bridgeland} proves that their derived categories
are equivalent:
$$\Phi: D^b(X)\to D^b(X^\prime)$$
where $\Phi$ is given by the Fourier-Mukai type transformation.  Furthermore, the equivalence 
$\Phi$ sends the category of perverse sheaves to perverse sheaves, i.e.
$$\Phi(^{q}\Per(X))=~^{p}\Per(X^\prime),$$
where $q=-(p+1)$ is the perversity.  Usually we take $p=-1, 0$. 
In \cite{Calabrese} Calabrese proves some Hall algebra identities in the Hall algebra $H(\sPA)$ for $\sPA:=~^{p}\Per(X)$.  Since 
$\Phi$ preserves perverse sheaves, applying the integration map he gets the flop formula for the DT-invariants. 
A proof of the flop formula for DT-type invariants using Joyce's wall crossing is given by Toda in \cite{Toda}; and the study of DT-invariants under blow-ups and flops using J. Li's degeneration formula was given by Hu-Li \cite{HL}. 

\subsection{Flops of Calabi-Yau threefold stacks}

In this paper we consider the orbifold flop of Calabi-Yau threefold DM  stacks.  The reason to consider Calabi-Yau  threefold stacks (or orbifolds), on one hand, is that in  general  there exists a global  K\"ahler moduli space,  and there are two large volume points: one corresponds to the crepant resolution of the orbifold singularity, and one corresponds to the orbifold singularity. 
They are usually derived equivalent, hence some information in the orbifold side determines the side of the crepant resolution. 
On the other hand, ``The Crepant Transformation Conjecture" (CTC) of Y. Ruan in Gromov-Witten (GW) theory has been attracted a lot of interests, see \cite{LR}, \cite{LLW}, \cite{CIJ}, \cite{CJL}.  It is interesting to consider the general CTC conjecture for DT-theory. Also this gives us a chance to learn Bridgeland's method of Hall algebra identities. 

The ``Crepant Resolution Conjecture" for the DT-invariants was formulated by J. Bryan etc in \cite{BCY} for Calabi-Yau orbifolds satisfying  the Hard Lefschetz (HL) conditions. In \cite{Calabrese2},  Calabrese proves  part of the conjecture for Calabi-Yau threefold stacks satisfying the HL conditions, using similar method of Hall algebra identities in \cite{Calabrese}.  
Note that Bryan and Steinberg \cite{BD} also prove partial result of the crepant resolution conjecture for the DT-invariants. 
We hope that our study of orbifold flop may shed more light on the crepant resolution conjecture. 

An orbifold flop of Calabi-Yau threefold DM stacks is given by the diagram:
\begin{equation}\label{threefold:stack:flop}
\xymatrix{
&\sZ\ar[dl]_{f}\ar[dr]^{f^\prime}&\\
\sX\ar@{-->}[rr]^{\phi}\ar[dr]_{\psi}&& \sX^\prime\ar[dl]^{\psi^\prime}\\
&Y&
}
\end{equation}
where 
\begin{enumerate}
\item $\sX$ and $\sX^\prime$ are smooth Calabi-Yau threefold DM  stacks;
\item $Y$ is a singular variety with only zero-dimensional singularities;
\item Both $\psi$ and $\psi^\prime$ contract cyclic quotients of weighted projective lines 
$\pp(a_1,a_2)$, $\pp(b_1,b_2)$ respectively;
\item $\sZ$ is the common weighted blow-up along the exceptional locus.
\end{enumerate}

\begin{rmk}
Actually the contraction map
$\psi: \sX\to Y$ can be made to have one dimensional singularities as in \cite{Calabrese}.  One still can get some formulas on the DT type invariants, see \cite{Calabrese3}.  Here we only fix to the case of $Y$ with isolated singularities. 
\end{rmk}
Similar to Abramovich-Chen in \cite{AC}, \cite{Chen}, we  prove that  the derived categories of $\sX$ and $\sX^\prime$ are equivalent for such orbifold flops using the idea of  perverse point sheaves of Bridgeland. The equivalence 
\begin{equation}\label{Equivalence:orbifold:flop}
\Phi: D^b(\sX)\to D^b(\sX^\prime)
\end{equation}
is given by the Fourier-Mukai transformation 
$\Phi=\FM$, where 
$$\FM(-)=f^\prime_{\star}(f^\star(-)).$$
Moreover, the equivalence $\Phi$ also sends perverse sheaves to perverse sheaves.
\begin{equation}
\Phi(~^{q}\Per(\sX))=~^{p}\Per(\sX^\prime),
\end{equation}
where $q=-(p+1)$.

Let $\sPA:=\sPA(\sX):=~^{p}\Per(\sX)$.   We work on the Hall algebra $H(\sPA)$ of 
$\sPA$.
Let $K(\sX)$ be the numerical K-group of $\sX$, and 
$$F_0K(\sX)\subset F_1K(\sX)\subset \cdots\subset K(\sX)$$
be the filtration with the support of dimension, see \S \ref{K-theory:basis} for more details. 

Fixing a K-group class $\alpha\in F_1K(\sX)$, let 
$\Hilb^\alpha(\sX)$ be the Hilbert scheme of substacks of $\sX$ with class 
$\alpha$. The DT-invariant is defined by
$$\DT_{\alpha}(\sX)=\chi(\Hilb^\alpha(\sX), \nu_{H}),$$
where $\nu_{H}$ is the Behrend function in \cite{Behrend} of $\Hilb^\alpha(\sX)$. 
Define the $DT$-partition function by 
\begin{equation}\label{DT:partition:function}
\DT(\sX)=\sum_{\alpha\in F_1K(\sX)}\DT_{\alpha}(\sX)q^{\alpha}.
\end{equation}

Similarly the notion of  PT-stable pair for the threefold DM stack $\sX$ is very similar  to Definition \ref{PT-stable:pair}.
A PT-stable pair $[\sO_{\sX}\stackrel{s}{\rightarrow}F]\in D^b(\sX)$ is an object in the derived category such that
$F$ is a pure one-dimensional sheaf supported on curves in $\sX$ with topological data $\beta$, and the cokernel 
$\Coker(s)$ is zero-dimensional. 
Let $\PT^{\beta}(\sX)$ be the PT-moduli space of stable pairs with K-group class $\beta$. 
Then the PT-invariant is defined by: 
$$\PT_{\beta}(\sX)=\chi(\PT^\beta(\sX), \nu_{PT}),$$
where $\nu_{\PT}$ is the Behrend function of $\PT^\beta(\sX)$.
The $\PT$-partition function by
\begin{equation}\label{PT:partition:function}
\PT(\sX)=\sum_{\beta\in F_1K(\sX)}\PT_{\beta}(\sX)q^{\beta}
\end{equation}

Define the following DT-type partition functions:
$$
\DT_{0}(\sX)=\sum_{\alpha\in F_0K(\sX)}\DT_{\alpha}(\sX)q^{\alpha};
$$

$$
\DT_{\exc}(\sX)=\sum_{\substack{\alpha\in F_1K(\sX)/F_0;\\
f_{\star}\alpha=0}}\DT_{\alpha}(\sX)q^{\alpha};
$$

$$
\DT_{\exc}^\vee(\sX)=\sum_{\substack{\alpha\in F_1K(\sX)/F_0;\\
f_{\star}\alpha=0}}\DT_{-\alpha}(\sX)q^{-\alpha};
$$
The main result in the paper is:
\begin{thm}\label{main:flop:introduction}
Let $\phi: \sX\dasharrow \sX^\prime$ be an orbifold  flop of Calabi-Yau threefold DM stacks. Then
$$\Phi_{\star}\left(\DT(\sX)\cdot \frac{\DT^\vee_{\exc}(\sX)}{\DT_0(\sX)}\right)=\DT(\sX^\prime)\cdot \frac{\DT^\vee_{\exc}(\sX^\prime)}{\DT_0(\sX^\prime)},$$
where $\Phi_{\star}$ is understood as sending the data $\alpha\in K(\sX)$ to $\varphi(\alpha)\in K(\sX^\prime)$.
\end{thm}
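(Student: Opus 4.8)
The proof follows Bridgeland's motivic Hall algebra method and Calabrese's treatment of threefold flops, now carried out in the orbifold setting. The plan is to introduce an auxiliary \emph{perverse} DT partition function $\pDT(\sX)$ built from the heart $\sPA={}^{p}\Per(\sX)$, to show that on each side of the flop it differs from the ordinary $\DT$-series by precisely the exceptional correction factor, and then to observe that the Fourier--Mukai equivalence $\Phi$ of \eqref{Equivalence:orbifold:flop} identifies the perverse series on the two sides.

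First I would set up the motivic Hall algebra $H(\sPA)$ together with the integration morphism $I$ from (a completion of) $H(\sPA)$ to the quantum torus attached to the Euler form on $K(\sX)$, recalling from \cite{JS}, \cite{Bridgeland10} that on the semiclassical part $I$ is an algebra homomorphism sending the characteristic function of a moduli stack of objects to its Behrend-weighted generating function. For this one needs the relevant moduli stacks --- the stack of all objects of $\sPA$ of a fixed class, the perverse Hilbert scheme $\PHilb^{\alpha}(\sX)$ parametrizing perverse quotients $[\sO_{\sX}\twoheadrightarrow E]$ with $E\in\sPA$, the ordinary Hilbert scheme $\Hilb^{\alpha}(\sX)$, and the stack of objects supported on the flopping locus with vanishing $f_{\star}$-class --- to be algebraic, locally of finite type, and with the finiteness needed for their classes to define elements of a suitable completion of $H(\sPA)$. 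Here the hypothesis that the flopping locus is a cyclic quotient of weighted projective lines $\pp(a_1,a_2)$, $\pp(b_1,b_2)$ is what lets one bound and classify the (perverse) sheaves supported there, exactly as in \cite{Calabrese}, \cite{Calabrese2}.

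The technical heart is a Hall algebra identity on $\sX$ alone. Using the torsion pair on $\sPA$ that isolates the subcategory of objects killed by $f_{\star}$ --- zero-dimensional sheaves together with sheaves supported on the exceptional locus with trivial pushforward class --- one decomposes the stack of perverse ideal sheaves into the ordinary Hilbert stack convolved with the exceptional stack, obtaining in $H(\sPA)$ an identity of the schematic form $\mathbf{1}_{\PHilb}=\mathbf{1}_{\Hilb}\ast\mathbf{1}_{\exc}$ after the appropriate regrading. This is the step in which the local geometry of the orbifold flop genuinely enters --- the $\Ext$-groups between, and the tilting relating, $\Coh(\sX)$ and $\sPA$ in a neighbourhood of the flopping curves --- and I expect it to be the main obstacle. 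Applying $I$ to this identity and using multiplicativity of the Behrend function along the relevant exact sequences (the Joyce--Song Behrend identities, available because $\sX$ carries a symmetric obstruction theory) converts it into
$$\pDT(\sX)=\DT(\sX)\cdot\frac{\DT^{\vee}_{\exc}(\sX)}{\DT_{0}(\sX)},$$
and symmetrically on $\sX^{\prime}$ with the complementary perversity $q=-(p+1)$; the right-hand side makes sense because the series $\DT_{0}$ and $\DT^{\vee}_{\exc}$ are invertible in the completed quantum torus, their constant terms being $1$.

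Finally I would invoke the derived equivalence. Since $\Phi=\FM$ sends $\sO_{\sX}$ to $\sO_{\sX^{\prime}}$ and carries ${}^{q}\Per(\sX)$ isomorphically onto ${}^{p}\Per(\sX^{\prime})$, it induces an isomorphism of motivic Hall algebras matching the perverse Hilbert stacks on the two sides, and because the Behrend function is invariant under derived equivalences of Calabi--Yau threefold DM stacks one gets $\Phi_{\star}\pDT(\sX)=\pDT(\sX^{\prime})$. Combining this with the two instances of the displayed identity yields
$$\Phi_{\star}\!\left(\DT(\sX)\cdot\frac{\DT^{\vee}_{\exc}(\sX)}{\DT_{0}(\sX)}\right)=\Phi_{\star}\pDT(\sX)=\pDT(\sX^{\prime})=\DT(\sX^{\prime})\cdot\frac{\DT^{\vee}_{\exc}(\sX^{\prime})}{\DT_{0}(\sX^{\prime})},$$
which is the assertion of the theorem.
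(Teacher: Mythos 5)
Your overall architecture is the same as the paper's: introduce the perverse Hilbert scheme and its generating series $\pDT(\sX/Y)$, prove on each side the factorization $\pDT=\DT\cdot\DT^{\vee}_{\exc}/\DT_{0}$ by Hall algebra methods, and then transport $\pDT$ through the Fourier--Mukai equivalence, which preserves the perverse hearts (with $q=-(p+1)$). The final step and the final formula are right. The genuine gap is in the middle step, which you yourself flag as ``the main obstacle'' but then describe in a form that is not what can actually be proved. There is no direct identity of the shape $\mathbf{1}_{\PHilb}=\mathbf{1}_{\Hilb}\ast\mathbf{1}_{\exc}$ with a DT-type exceptional factor. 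The identity one can prove geometrically (via the torsion pair $(\sPT,\sPF)$ on $\sPA$, not via ``objects killed by $f_{\star}$'') is $\srPH_{\leq 1}\star \mathds{1}_{\sPF[1]}=\mathds{1}^{\sO}_{\sPF[1]}\star \srH_{\leq 1}$, in which the exceptional contribution enters as the \emph{framed} element of shifted perverse-torsion-free sheaves, and three further ingredients are needed to extract your displayed factorization: (i) the duality functor $\dd^\prime=\dd[1]$ together with the framed identity in the tilted heart $\sA^{\#}$, which rewrites $\mathds{1}^{\sO}_{\sPF[1]}$ as $\mathds{1}_{\sPF[1]}\star \dd^\prime(\srH^{\#}_{\exc})$, i.e.\ the exceptional factor naturally appears as a \emph{dual stable-pair} (PT) element, not a Hilbert-scheme element; (ii) the cancellation of $\mathds{1}_{\sPF[1]}$, which is not a regular element of the Hall algebra, so one cannot simply apply the integration map to both sides --- one needs Joyce's no-poles theorem, that $(\ll-1)\log(\mathds{1}_{\sPF[1]})$ is regular, so that $\Ad_{\mathds{1}_{\sPF[1]}}$ induces the Poisson automorphism $\exp\{\eta,-\}$, which becomes trivial after integration; and (iii) Bayer's DT/PT correspondence for Calabi--Yau orbifolds, which is what converts the resulting factor $I(\dd^\prime(\srH^{\#}_{\exc}))=\PT^{\vee}_{\exc}(\sX)$ into $\DT^{\vee}_{\exc}(\sX)/\DT_{0}(\sX)$. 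None of (i)--(iii) appear in your proposal, and without them the claimed Hall identity does not yield the correction factor in the form you state; in particular ``multiplicativity of the Behrend function'' alone does not legitimize integrating an identity containing the non-regular element $\mathds{1}_{\sPF[1]}$.

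A smaller point: the Laurent/completion issues you mention in passing are indeed needed (one must check $\srPH_{\leq 1}$, $\srH_{\leq 1}$, $\mathds{1}_{\sPF[1]}$, $\mathds{1}^{\sO}_{\sPF[1]}$ are Laurent so the identity lives in a completion $H(\sPA_{\leq 1})_{\Lambda}$ admitting an integration map), and the transport step does not require any general ``invariance of the Behrend function under derived equivalences'': since $\Phi$ identifies the perverse hearts, it identifies the perverse Hilbert schemes as moduli spaces, and the equality $\Phi_{\star}\pDT(\sX/Y)=\pDT(\sX^{\prime}/Y)$ follows directly from that identification.
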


We prove Theorem \ref{main:flop:introduction} along the method of Bridgeland \cite{Bridgeland11} and Calabrese \cite{Calabrese} by working on the Hall algebra identities in $H(\sPA)$. 
One can define the perverse Hilbert scheme 
$$^{p}\Hilb^\alpha(\sX/Y)$$
which parametrizes the quotients $\sO_{\sX}\to F$ in the category $\sPA$ with fixing class $[F]=\alpha$, since the structure sheaf $\sO_{\sX}\in \sPA$. 
Then we define 
$$\pDT_{\alpha}=\chi(^{p}\Hilb^\alpha(\sX/Y), \nu_{pH})$$
where $\nu_{pH}$ is the Behrend function for $^{p}\Hilb^\alpha(\sX/Y)$.  The partition function is defined by:
$$
\pDT(\sX/Y)=\sum_{\alpha\in F_1K(\sX)}\pDT_{\alpha}(\sX/Y)q^{\alpha}.
$$
We prove that 
$$\pDT(\sX/Y)=\DT(\sX)\cdot \PT^\vee(\sX),$$
where 
$$\PT^\vee(\sX)=\sum_{\beta\in F_1K(\sX)}\PT_{-\beta}(\sX)q^{-\beta}.$$
By Bayer's DT/PT-correspondence for Calabi-Yau threefold  stacks in \cite{Ba}, 
$$\PT^\vee(\sX)=\frac{\DT^\vee_{\exc}(\sX)}{\DT_0(\sX)}.$$
Since for the orbifold flop $\phi: \sX\dasharrow \sX^\prime$, the derived equivalence 
$\Phi$ sends $~^{q}\sA(\sX)$ to $\sPA(\sX^\prime)$, where $q=-(p+1)$. The Theorem follows.  

Our orbifold flops need not to satisfy the HL condition as required by J. Bryan etc in \cite{BCY}.  There exist orbifold flop $\sX\dasharrow\sX^\prime$ of threefold Calabi-Yau stacks such that $\sX$ satisfies the HL condition, while $\sX^\prime$ does not.  The main result in Theorem \ref{main:flop:introduction} implies some information on the Donaldson-Thomas invariants for $\sX^\prime$ form the ones for $\sX$, see \S \ref{section:HLcondition}.

\subsection{Motivic DT-invariants and flops}

The motivic DT-invariants and their wall crossing formula are defined and  studied by Kontsevich and Soibelman in  \cite{KS}, Behrend, Bryan and   Szendroi in \cite{BBS}.  Joyce etc. have a working group on the motivic DT-theory, see \cite{BBJ}, \cite{BJM}.  It is interesting to see how motivic DT-invariants change under orbifold flops.  This can be taken as the motivic analogue of the Crepant Transformation Conjecture in DT-theory. 

Following the method in this paper, we need an integration map from the motivic Hall algebra to the motivic quantum torus, defined in \cite{KS}.  This is related to  Conjecture 4.2 of Kontsevich and Soibelman in \cite{KS}, which can be taken as the motivic version of the Behrend function identities.  We follow the proposal of Joyce in \cite{JS}, and Bridgeland in \cite{Bridgeland11} for the motivic version of the DT invariants.  In \cite{Jiang4}, we will prove  the motivic version of the Joyce-Song formula for the Behrend function identities, and we will also prove that there is a Poisson algebra homomorphism from  the motivic Hall algebra to the motivic quantum torus. Thus the  expected formula for motivic DT-invariants under flops should be true.  We hope to address these conjectures in the future.

\subsection{Comparation to the Gromov-Witten invariants under flops}

DT-invariants have deep connections to Gromov-Witten (GW) invariants via  the GW/DT-correspondence  in \cite{MNOP1}, \cite{MNOP2}. This conjecture has been proved in many cases, including toric threefolds in \cite{MOOP}, and quintic threefolds in \cite{PP}. 

For two birational Calabi-Yau stacks, the crepant transformation conjecture (CTC) says that the partition functions of their GW invariants are related by the analytic continuation.  Let $\sX\dasharrow \sX^\prime$ be a toric crepant birational transformation given by a toric wall crossing. 
In \cite{CIJ}, the authors prove the genus zero CTC.  Using Givental's quantization, in \cite{CI}, Coates and Iritani solved the higher genus CTC.  For such a toric flop, their derived categories are equivalent, and the kernel for the Fourier-Mukai transform is given by the common blow-up.  In \cite{CIJ}, the authors prove that the Fourier-Mukai transform matches the analytic continuation of the quantum connections for $\sX$ and $\sX^\prime$.  Since applying twice the Fourier-Mukai transform, one gets the monodromy for the K-theory, and hence the monodromy of  the derived category, the CTC implies that the monodromy given by the Fourier-Mukai transform  is the same as the monodromy given by the quantum connections.   More general orbifold flops are studied in \cite{CJL}.

Recall for the  orbifold flop of Calabi-Yau threefold DM stacks,  the Fourier-Mukai transform preserves the perverse sheaves for $\sX$ and $\sX^\prime$.  Applying twice the Fourier-Mukai transform gives the Seidel-Thomas twist \cite{ST} for the derived category.  It is interesting to study how the Fourier-Mukai transform can relate DT-invariants and GW-invariants together using the method in this paper and the calculation in \cite{CIJ}. 

\subsection{Outline}
The brief outline of the paper is as follows.  We introduce the orbifold flops for Calabi-Yau threefold  DM stacks in \S \ref{orbifold-flop}.   In \S \ref{section:perverse} we talk about the perverse sheaves on the Calabi-Yau threefold DM stacks and prove the derived equivalence for the orbifold flops.  This generalizes the results as in \cite{AC} and \cite{Bridgeland}.  We also define the counting invariants in the derived category and form the partition functions of the invariants.  In \S \ref{section:motivic:Hall:algebra} we review the motivic Hall algebra of Joyce \cite{Joyce07} and Bridgeland \cite{Bridgeland10}, and define the integration map.  We prove Theorem \ref{main:flop:introduction} in \S \ref{section:Hall:algebra:identity} using the method of Bridgeland and Calabrese on Hall algebra identities.  Finally in \S \ref{section:HLcondition} we talk about the HL condition for the orbifold flop.

\subsection{Acknowledgement}
We would like to thank Tom Coates, Alessio Corti and Richard Thomas for the encouragements and support when the author was staying at Imperial College London where this project was started.   We thank Tom Bridgeland and John Calabrese for the valuable discussions on perverse sheaves and motivic Hall algebras. 
This work is partially supported by  Simons Foundation Collaboration Grant 311837.


\section{Orbifold flop of three dimensional Calabi-Yau DM stacks.}\Label{orbifold-flop}

We define the orbifold flop for three dimensional Calabi-Yau stacks. 

\subsection{The local construction.}

In this section we give the local construction of orbifold flop in three dimensional Calabi-Yau  orbifolds or Deligne-Mumford (DM) stacks. 

Fix $\mathbf{a}=(a_0, a_1)$ and $\mathbf{b}=(b_0, b_1)$ as positive integers.
Let $\bP(a_0,a_1)$, $\bP(b_0,b_1)$ be the corresponding  weighted projective lines.
To avoid gerbe structure we require $\gcd(a_0, a_1,b_0, b_1)=1$.  To preserve the Calabi-Yau property we require $a_0+a_1=b_0+b_1$.
We will call such condition  the \emph{Calabi--Yau condition}.

Recall that in   \cite{Kaw} Kawamata defines the construction of so called "toric flops".  We briefly explain the construction here.   Let $\mathbb{C}^{*}$ acts on the affine variety $\mathbb{A}=\mathbb{A}^{4}$ 
by:
\begin{equation}\label{action}
\lambda(x_0,x_1,y_0,y_1)=(\lambda^{a_0}x_0,\lambda^{a_1}x_1,\lambda^{-b_0}y_0,\lambda^{-b_1}y_1).
\end{equation}
Consider the following stack quotients:
$$
\begin{array}{c}
\widetilde{\sX}=[(\mathbb{A}\setminus \{x_0 =x_1=0\})/\mathbb{C}^{*}]; \\ \\
\widetilde{\sX}^\prime=[(\mathbb{A}\setminus \{y_0=y_1=0\})/\mathbb{C}^{*}]; \\ \\
\widetilde{\sY}=[\mathbb{A}/\mathbb{C}^{*}]=\mbox{spec} R^{\mathbb{C}^{*}},
\end{array}
$$
where $R=\mathbb{C}[x_0,x_1,y_0,y_1]$.
Let $\widetilde{Y}$ be the coarse moduli space of $\widetilde{\sY}$. 
There is a diagram of threefold flops with quotient singularities:
\begin{equation}\label{diagram1}
\xymatrix{
~&~\widetilde{\sZ}\ar[dl]_{f}\ar[dr]^{f^\prime}\\
~ \widetilde{\sX}\;\ar[rd]_{\psi}&&
~\widetilde{\sX}^{\prime}\;\ar[ld]^{\psi'}\\
&~\widetilde{Y},
}
\end{equation} \normalsize
where $\widetilde{\sZ}$ is the fibre product.  The morphism $\psi, \psi'$ are projective and birational whose exceptional loci
$\widetilde{Z}, \widetilde{Z}^\prime$ are isomorphic to the weighted projective lines $\mathbb{P}(a_0,a_1)$ and $\mathbb{P}(b_0,b_1)$ respectively.  The DM stack  $\widetilde{\sZ}$ is  the common blow-up of $\widetilde{\sX}, \widetilde{\sX}^\prime$ along $\widetilde{Z}\subset \sX, \widetilde{Z}^\prime\subset \sX^\prime$ respectively.

If all the $a_i$ and $b_i$ are one, this is the local model of the famous Atiyah flop or the conifold flop. 
We are interested in three dimensional orbifolds, which are $\qq$-Gorenstein algebraic varieties with quotient singularities. 
If $\sX$  is a Calabi-Yau threefold with terminal singularities,  by Kollar 
\cite{Kollar}, the flop $\sX^\prime$ of $\sX$ and the contraction $Y$ all have terminal singularities. The flopping curves are always $\pp^1/\mu_{n}$, where $\mu_n$ is a cyclic group of order $n$ acting on $\pp^1$ by rotation. This is due to the fact that a terminal singularity inside $Y$ is isolated, which is  a hypersurface singularity, and is deformation equivalent to the quotient $\cc^3/\mu_n$ with action by $(1,-1,r)$, where $(r,n)=1$.
We put this construction into the toric picture of Kawamata.

Let $\mu_n$ act on $\sY$ by $\zeta(x_0,x_1,y_0,y_1)=(\zeta x_0,\zeta^{-1}x_1,\zeta^{r}y_0,y_1)$, where 
$(n,r)=1$. 

\begin{rmk}
Note that in the list of singularities as in \cite{Kollar}, $\widetilde{Y}$ has the singularity of type 
$$Y=\spec\cc[x_0,x_1,y_0,y_1]/(x_0y_0-x_1y_1),$$
which is called the conifold singularity in physics.   The singularity can also be understood as the hypersurface singularity:
$$x_0y_0-x_1^2+y_1^2=0.$$ 
Let $Y_n$ be the hypersurface in $\cc^4$ defined by 
$$x_0y_0-x_1^{2n}+y_1^2=0.$$
When $n=1$, $Y_1$ is the above conifold singularity.   The above construction (\ref{diagram1}) also works for this type of singularities when $n\geq 2$, where after we take the resolutions $\widetilde{\sX}$ and $\widetilde{\sX}^\prime$, the exceptional locus are 
$\pp^1$ with normal bundles $\sO_{\pp^1}\oplus \sO_{\pp^1}(-2)$. The singularity in \cite{Kollar} is 
$$Y_1/\mu_n,$$
and the action is given by $\zeta(x_0,x_1,y_0,y_1)=(\zeta x_0,\zeta^{-1}x_1,\zeta^{r}y_0,y_1)$ with $(r,n)=1$. 
\end{rmk}

\begin{defn} \label{local}
A local orbifold flop is given by the following diagram of  stack quotients:
\begin{equation}\label{diagram2}
\xymatrix{
&\sZ=[\widetilde{\sZ}/\mu_n]\ar[dl]_{f}\ar[dr]^{f^\prime}&\\
 \mathcal{X}=[\widetilde{\sX}/\mu_n]\;\ar[rd]_{\psi}&&
\mathcal{X}^{\prime}=[\widetilde{\sX}^\prime/\mu_n]\;\ar[ld]^{\psi'}\\
&Y=\overline{[\widetilde{Y}/\mu_n]},}
\end{equation} \normalsize
where $\overline{[\widetilde{Y}/\mu_n]}$ is the coarse moduli space of $[\widetilde{Y}/\mu_n]$. 
The morphism $\psi, \psi'$ are projective and birational whose exceptional loci
$Z, Z'$ are isomorphic to the weighted projective lines $\mathbb{P}(a_0,a_1)/\mu_n$ and $\mathbb{P}(b_0,b_1)/\mu_n$ respectively.  
\end{defn}

If $a_0,a_1$ are coprime, then the weighted projective line $\pp(a_0,a_1)$ has only two singular points 
$[1,0]$ and $[0,1]$ and the quotient $\pp(a_0,a_1)/\mu_n$ is a toric orbifold in the sense of \cite{BCS}, \cite{Jiang}.
The two singular points 
$[1,0]$ and $[0,1]$ will have local orbifold groups $\mu_{a_0 n}$ and $\mu_{a_1 n}$.

If $a_0,a_1$ are not coprime, then the weighted projective line $\pp(a_0,a_1)$ is a $\mu_d$-gerbe over 
$\pp(\frac{a_0}{d},\frac{a_1}{d})$, where $d=\gcd(a_0,a_1)$. The weighted projective line $\pp(\frac{a_0}{d},\frac{a_1}{d})$ has two singular points 
$[1,0]$ and $[0,1]$ and the quotient $\pp(a_0,a_1)/\mu_n$ is a toric Deligne-Mumford stack in the sense of \cite{BCS},  \cite{Jiang}.
The two singular points 
$[1,0]$ and $[0,1]$ will also have local orbifold groups $\mu_{a_0 n}$ and $\mu_{a_1 n}$, but the local action on it and the normal bundle are quite different comparing to the previous case.

\subsection{Orbifold flop for threefold stacks.}

In this section we establish the general definition of flops of Calabi-Yau threefold  stacks. 
Let $X$ be a quasi-projective $\qq$-Gorenstein Calabi-Yau variety.  We denote by 
$\sX$ the covering Deligne-Mumford stack of $X$. As in \cite{AC}, the stack $\sX$ is a quotient stack 
$$\sX=[P_{X}/\cc^*],$$
where $P_{X}=\spec(\oplus_{i\in\zz}K_{X}^{i})$.

\begin{defn} \label{orbifold:flop}
We say that two smooth Calabi-Yau threefold DM stacks is an
\emph{orbifold flop} $\phi: \mathcal{X} \dashrightarrow \mathcal{X}'$ 
 if they fit into the following 
commutative diagram
$$
\xymatrix{
&E\subset\sZ\ar[dl]_{f}\ar[dr]^{f^\prime}&\\
Z \subset \mathcal{X}\;\ar[rd]_{\psi}& &Z'\subset\mathcal{X}^{'}\;\ar[ld]^{\psi'}\\
& {p} \in Y,}
$$
such that 
\begin{itemize}
  \item $Z \cong \bP(\mathbf{a})/\mu_n$ and $Z' \cong \bP(\mathbf{b})/\mu_n$;
  \item the normal bundle $N_Z$ is isomorphic to 
    $(\oplus_i \sO_{\bP(\mathbf{a})}(- b_i))/\mu_n$ and
    $N_{Z'}$ is isomorphic to $(\oplus_i \sO_{\bP(\mathbf{b})}(- a_i))/\mu_n$;
  \item $\psi$ and $\psi'$ are birational (small) contractions such that
    the exceptional loci $Z$ and $Z'$ map to the point $p$;
\end{itemize}
\end{defn}

\begin{defn} \label{orbifold:flop:ab}
We say that two smooth Calabi-Yau threefold DM stacks is an
\emph{orbifold flop} $\phi: \mathcal{X} \dashrightarrow \mathcal{X}'$ 
of type $(\mathbf{a}, \mathbf{b})$ if they fit into the following 
commutative diagram
$$
\xymatrix{
&E\subset\sZ\ar[dl]_{f}\ar[dr]^{f^\prime}&\\
Z \subset \mathcal{X}\;\ar[rd]_{\psi}& &Z'\subset\mathcal{X}^{'}\;\ar[ld]^{\psi'}\\
& {p} \in Y,}
$$
such that 
\begin{itemize}
  \item $Z \cong \bP(\mathbf{a})$ and $Z' \cong \bP(\mathbf{b})$;
  \item the normal bundle $N_Z$ is isomorphic to 
    $(\oplus_i \sO_{\bP(\mathbf{a})}(- b_i))$ and
    $N_{Z'}$ is isomorphic to $(\oplus_i \sO_{\bP(\mathbf{b})}(- a_i))$;
  \item $\psi$ and $\psi'$ are birational (small) contractions such that
    the exceptional loci $Z$ and $Z'$ map to the point $p$;
\end{itemize}
\end{defn}

\begin{rmk}
The orbifold flop in Definition \ref{orbifold:flop:ab} is defined and studied in \cite{CJL}, where the authors consider the general 
$\pp(a_1,\cdots,a_r)$-flop for higher dimensional DM stacks. 
\end{rmk}

\subsection{Hard Lefschetz condition}

Recall that a DM stack $\sX$ satisfies Hard Lefschetz (HL) condition, if  the  age for a group element is equal to the age of  its inverse.

\begin{prop}\label{HL:condition}
Let $\phi: \sX\dasharrow \sX^\prime$ be an orbifold flop of type $(\mathbf{a}, \mathbf{b})$. 
In order for both $\sX$ and $\sX^\prime$ to satisfy hard Lefschetz (along the zero section), it is necessary and sufficient that $a_i=b_i$ for all $i=0,1$ after reordering. We call this type of orbifold flop the HL orbifold flop.
\end{prop}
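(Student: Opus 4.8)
The statement is a clean necessary-and-sufficient condition, so the proof should reduce the Hard Lefschetz condition for the two stacks $\sX,\sX'$ to an explicit arithmetic comparison of the weight data $(\mathbf a,\mathbf b)$. The HL condition, as recalled just before the proposition, is the statement that $\age(g)=\age(g^{-1})$ for every element $g$ of every local isotropy group (here "along the zero section" restricts attention to the isotropy groups occurring along $Z\subset\sX$, resp. $Z'\subset\sX'$). For an orbifold flop of type $(\mathbf a,\mathbf b)$ as in Definition~\ref{orbifold:flop:ab}, the exceptional locus is $Z\cong\bP(\mathbf a)=\bP(a_0,a_1)$ with normal bundle $(\oplus_i\sO_{\bP(\mathbf a)}(-b_i))$, so the only stacky points are the two torus-fixed points $[1{:}0]$ and $[0{:}1]$, whose isotropy groups are $\mu_{a_0}$ and $\mu_{a_1}$. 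First I would write down, at each of these two points, the weights of the $\mu_{a_0}$- (resp. $\mu_{a_1}$-) action on the three-dimensional tangent space: one weight comes from the tangent direction along $\bP(\mathbf a)$, and the other two come from the fibre of the normal bundle $\oplus_i\sO_{\bP(\mathbf a)}(-b_i)$. A standard local-coordinate computation on a weighted projective line gives these weights explicitly in terms of $a_0,a_1,b_0,b_1$ (mod $a_0$ at the first point, mod $a_1$ at the second).

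**Key steps.** Carrying this out: at the point $[1{:}0]$ the isotropy group $\mu_{a_0}$ acts on the tangent space with weights $(a_1, -b_0, -b_1)$ reduced modulo $a_0$ (the first from the $\bP^1$-direction $x_1/x_0^{\,a_1/a_0}$, the other two from the normal fibre twisted by $-b_i$), and symmetrically at $[0{:}1]$ the group $\mu_{a_1}$ acts with weights $(a_0,-b_0,-b_1)\bmod a_1$; one uses here the Calabi--Yau normalization $a_0+a_1=b_0+b_1$, which guarantees the sum of the three weights is $\equiv 0$, i.e. the local representation is indeed in $SL_3$ so the age computation is the natural one. Then $\age(g)$ for a generator $g=\zeta$ is $\tfrac1{a_0}(a_1 + (a_0-b_0) + (a_0-b_1))$ type expression, and HL at $[1{:}0]$ for $\sX$ becomes the numerical identity relating the residues of $a_1,b_0,b_1$ mod $a_0$ to those of $-a_1,-b_0,-b_1$ mod $a_0$ for all powers — equivalently, the local $SL_3$-action is conjugate to its inverse, which for a cyclic group means the multiset of weights $\{a_1,-b_0,-b_1\}\bmod a_0$ equals $\{-a_1,b_0,b_1\}\bmod a_0$. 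Doing the same for $\sX'$ (swap the roles of $\mathbf a$ and $\mathbf b$: now $Z'\cong\bP(\mathbf b)$ with normal weights $-a_i$) gives the analogous condition with $a\leftrightarrow b$. I would then show that the conjunction of "HL holds at both points of $\sX$ and both points of $\sX'$" forces $\{a_0,a_1\}=\{b_0,b_1\}$ as multisets, hence $a_i=b_i$ after reordering; the converse (if $a_i=b_i$ then the normal bundle is $\oplus\sO(-a_i)$ and the weight multiset at each point is manifestly stable under $g\mapsto g^{-1}$, so HL holds) is the easy direction.

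**Main obstacle.** The genuinely delicate point is the forward direction: extracting $\{a_0,a_1\}=\{b_0,b_1\}$ from the age identities. The identity at a single point of a single stack is weaker than what one wants — e.g. HL can hold at one fixed point for accidental numerical reasons — so the argument must combine the constraints at $[1{:}0]$ and $[0{:}1]$ of $\sX$ together with the two from $\sX'$, and use the Calabi--Yau relation $a_0+a_1=b_0+b_1$ to pin things down. I expect the cleanest route is: the weight-multiset condition $\{a_1,-b_0,-b_1\}\equiv\{-a_1,b_0,b_1\}\pmod{a_0}$ says $2a_1\equiv 0$ or $a_1\equiv\pm b_i \pmod{a_0}$ for appropriate pairings; running this simultaneously with the mod-$a_1$ version and the two mod-$b_i$ versions from $\sX'$, and feeding in $a_0+a_1=b_0+b_1$, one checks case-by-case that the only surviving solution is $\{a_0,a_1\}=\{b_0,b_1\}$. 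I would organize this as a short finite case analysis rather than a slick one-liner, since the asymmetry between the "$\bP(\mathbf a)$ with normal twist $-b_i$" and "$\bP(\mathbf b)$ with normal twist $-a_i$" pictures is exactly what makes the constraint rigid, and it is worth making that interplay explicit. Everything else (the local weight computation on a weighted $\bP^1$, the age formula, the $SL_3$ normalization) is routine and can be cited or done in a line.
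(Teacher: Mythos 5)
Your plan is essentially correct and, once the computation is carried out, gives a complete and self-contained proof; this is a genuinely different route from the paper, whose proof of Proposition~\ref{HL:condition} consists of citing the more general quasi-simple orbifold flop result of \cite{CJL} plus a one-sentence remark that the age condition can be checked directly (which really only addresses the easy direction). Your starting point is the right one: at $[1{:}0]$ the stabilizer $\mu_{a_0}$ acts on the three-dimensional tangent space with weights $(a_1,-b_0,-b_1)\bmod a_0$, symmetrically at the other fixed points of $Z$ and $Z'$, and the Calabi--Yau relation $a_0+a_1=b_0+b_1$ makes each local representation special linear. What your route buys is an explicit arithmetic criterion and a proof independent of \cite{CJL}; what the paper's route buys is brevity and a statement valid for the more general flops treated there.

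Three points should be tightened. (1) Your assertion that HL at a cyclic point is equivalent to inversion-invariance of the weight multiset needs a line of proof: for $\mu_r$ acting with weights $(w_1,w_2,w_3)$, $w_1+w_2+w_3\equiv 0\bmod r$, every age is an integer, while $\age(g)+\age(g^{-1})$ equals the number of weights not killed by $g$, which is $0$, $2$ or $3$; so HL already at the generator forces some $w_i\equiv 0\bmod r$, and conversely a zero weight gives weights $(0,w,-w)$, hence HL. Thus HL at the $\mu_{a_0}$-point is exactly ``$a_0\mid a_1$ or $a_0\mid b_0$ or $a_0\mid b_1$'', and similarly at the other three points. (2) With this reformulation your feared case analysis collapses: after relabeling within pairs and swapping $\mathbf a\leftrightarrow\mathbf b$ if necessary, let $a_1$ be the largest of the four weights; the condition at its point forces $a_0=a_1$, $b_0=a_1$ or $b_1=a_1$, since a positive integer at most $a_1$ and divisible by $a_1$ equals $a_1$; if some $b_i=a_1$ the other $b$ equals $a_0$ by the Calabi--Yau relation, and if $a_0=a_1$ then $b_0+b_1=2a_1$ with $b_0,b_1\le a_1$ forces $b_0=b_1=a_1$; either way $\{a_0,a_1\}=\{b_0,b_1\}$, so necessity uses only one of the four conditions. (3) When $\gcd(a_0,a_1)=d>1$ (which occurs in the paper's own example $(2,2;1,3)$) the curve $Z$ is a $\mu_d$-gerbe, so your claim that the only stacky points are $[1{:}0]$ and $[0{:}1]$ is not accurate: there is a generic stabilizer $\mu_d$ along $Z$. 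This is harmless --- it acts with weights $(0,-b_0,-b_1)\bmod d$ and $d\mid b_0+b_1$, so HL is automatic there --- but it must be mentioned for the argument to be complete.
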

\smallskip
\begin{pf}
This is a three dimensional case of the more general quasi-simple orbifold flop defined in \cite{CJL}.  The result is a special case of a more general result there.  On the other hand, one can directly check that for any 
element $v\in \mu_{a_i}, \mu_{b)i}$ for $i=1,2$, the age $\age(v)$ is the same as the age $\age(v^{-1})$, which is the requirement of the HL condition. 
\end{pf}

\section{Perverse coherent sheaves and the derived equivalence.}\label{section:perverse}
\subsection{Perverse coherent sheaves.}
Fix a smooth Calabi-Yau threefold stack $\sX$, denote by $\sA:=\Coh(\sX)$ the abelian category of coherent sheaves over $\sX$. Let 
$D^b(\sX):=D(\sA)=D^{b}(\Coh(\sX))$ be  the bounded derived category of coherent sheaves over $\sX$. The abelian category $\Coh(\sX)$ is the heart of  the standard $t$-structure of $D^{b}(\Coh(\sX))$.

Let  $\phi: \mathcal{X} \dashrightarrow \mathcal{X}'$ be an orbifold flop of Calabi-Yau  threefold stacks,  i.e. there exists  a commutative diagram
$$
\xymatrix{
Z \subset \mathcal{X}\;\ar[rd]_{\psi}& &Z'\subset\mathcal{X}^{'}\;\ar[ld]^{\psi'}\\
& {p} \in Y.}
$$

This orbifold flop satisfies the following properties:
\begin{enumerate}
\item $\psi$ and $\psi^\prime$ are proper, birational and an isomorphism in codimension one;
\item $Y$ is  projective and only has zero dimensional singular locus;
\item the dualising sheaf of $Y$ is trivial, i.e. $\omega_{Y} =\sO_{Y}$;
\item $R\psi_{*} \sO_{\sX}=\sO_{Y}$; $R\psi^\prime_{*} \sO_{\sX^\prime}=\sO_{Y}$;
\item  $\dim_{\qq}N^{1}(\sX/Y)_{\qq}=1$, so is  $\dim_{\qq}N^{1}(\sX^\prime/Y)_{\qq}$,
\end{enumerate}
where $N^{1}(\sX/Y)_{\qq}=N^{1}(\sX/Y)_{\zz}\otimes\qq$ and 
$N^1(\sX/Y)$ is the group of divisors on $\sX$ modulo numerical equivalence over $Y$.  Similar results hold for 
$N^1(\sX^\prime/Y)$.

\textbf{Perverse $t$-structure on $\sX$:}
Let 
$$\pi: \sX\to X$$
be the map to its coarse moduli space, so that we have the following diagram:
\[
\xymatrix{
\sX\ar[d]_{\pi}\ar[dr]^{\psi}& \\
X\ar[r]^{\overline{\psi}}& Y.
}
\]

As in \cite{AC} there are two sub-categories of $D^b(\sX)$:
$$
\begin{cases}
B=\{L\pi^\star C\in D^b(\sX)| C\in D^b(X)\};\\
C_2=\{C\in D^b(\sX)| R\pi_{\star}C=0\}.
\end{cases}
$$
The pair $(B,C_2)$ gives a semiorthogonal decomposition on $D^b(\sX)$. 
On the category $C_2$, there is a standard $t$-structure which is induced from the standard $t$-structure
on $D^b(\sX)$.

Recall from \cite{Bridgeland}, for the map 
$\overline{\psi}: X\to Y$, there is a perverse $t$-structure $t(-1)$ and the heart of this $t$-structure is denoted by 
$\Per^{-1}(X/Y)$. 

\begin{defn}
The derived functor $R\pi_{\star}$ has right adjoint $\pi^{!}$ and the left adjoint $L\pi^\star$. 
Denote by $t(p,0)$ the $t$-structure obtained by gluing: the perverse $t$-structure $t(p)$ on $D^b(X)$, and the standard $t$-structure on $C_2$.  We denote by the heart of this $t$-structure by $\Per^{p}(\sX/Y):=\Per^{p,0}(\sX/Y)$. 
Usually we take $p=-1, 0$ and we always denote by 
$\Per(\sX/Y):=\Per^{-1}(\sX/Y)$. 
\end{defn}

Recall that in \cite{AC},  the perverse sheaf is classified as follows:
An object $E$ in $D^b(\sX)$ is a ``perverse sheaf" i.e. $E\in \Per(\sX/Y)$ if:
\begin{enumerate}
\item $R\pi_{\star}E$ is a perverse sheaf for $\overline{\psi}: X\to Y$ and $\pi: \sX\to X$ is the map to its coarse moduli space;
\item $\Hom(E,C)=0$ for all $C$ in $C_2^{>0}$ and $\Hom(D, E)=0$ for all $D$ in $C_2^{<0}$.
\end{enumerate}
Then Lemma 3.3.1 of \cite{AC} classifies all perverse coherent sheaves:
\begin{lem}
An object $E\in D^b(\sX)$ is a perverse sheaf if and only if  the following conditions are satisfied:
\begin{enumerate}
\item $H_i(E)=0$ unless $i=0$ or $1$;
\item $R^1\psi_{\star}H_0(E)=0$ and $R^0\psi_{\star}H_1(E)=0$;
\item $\Hom(\pi_\star H_0(E),C)=0$ for any sheaf 
$C$ on $\sX$ satisfying  $\overline{\psi}_{\star}C=R^1\psi_{\star}C=0$;
\item $\Hom(D, H_1(E))=0$ for any sheaf  $D$ in $C_2$.
\end{enumerate}
\end{lem}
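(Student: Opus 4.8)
The plan is to unravel the definition of $\Per(\sX/Y)$ recalled above — $E\in\Per(\sX/Y)$ iff $R\pi_\star E$ is a perverse sheaf for $\overline\psi$ (i.e.\ $R\pi_\star E\in\Per^{-1}(X/Y)$) and $\Hom(E,C)=0$ for $C\in C_2^{>0}$, $\Hom(D,E)=0$ for $D\in C_2^{<0}$ — and to match these three requirements against conditions (1)--(4). The facts about $\pi\colon\sX\to X$ that make this go through are that, being the coarse moduli map of a DM stack in characteristic $0$, $\pi_\star$ is exact with $R^{>0}\pi_\star=0$; hence $R\pi_\star=\pi_\star$ and $H_i(R\pi_\star E)=\pi_\star H_i(E)$ for all $i$, the category $C_2$ is exactly the complexes all of whose cohomology sheaves are annihilated by $\pi_\star$ (with the truncation $t$-structure inherited from $D^b(\sX)$, so $C_2^{<0}=C_2\cap D^{<0}(\sX)$ and similarly for $C_2^{>0}$), and $R^j\psi_\star\mathcal{F}=R^j\overline\psi_\star(\pi_\star\mathcal{F})$ for every sheaf $\mathcal{F}$ on $\sX$ by degeneration of the Leray spectral sequence.

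First I would prove the amplitude statement (1). Since $R\pi_\star E\in\Per^{-1}(X/Y)$, one has $\pi_\star H_i(E)=H_i(R\pi_\star E)=0$ for $i\ne 0,1$. If $H_i(E)\ne 0$ for some $i\notin\{0,1\}$, choose such an $i$ extremal; then $H_i(E)$, placed in degree $i$, is a nonzero object of $C_2$, and its canonical truncation triangle realises it either as a subobject of $E$ lying in $C_2^{<0}$ or as a quotient of $E$ lying in $C_2^{>0}$, with the map to (resp.\ from) $E$ nonzero. Either case contradicts one of the two orthogonality requirements, so (1) holds.

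Granting (1), the complex $R\pi_\star E$ has cohomology sheaves $\pi_\star H_0(E)$ and $\pi_\star H_1(E)$, and Bridgeland's cohomological description of $\Per^{-1}(X/Y)$ in \cite{Bridgeland} says it is perverse iff $R^1\overline\psi_\star(\pi_\star H_0(E))=0$, $\overline\psi_\star(\pi_\star H_1(E))=0$, and a certain $\Hom$-vanishing holds for $\pi_\star H_0(E)$. The identity $R^j\psi_\star=R^j\overline\psi_\star\circ\pi_\star$ turns the first two into $R^1\psi_\star H_0(E)=0$ and $R^0\psi_\star H_1(E)=0$, which is (2); and pushing the $\Hom$-vanishing through the adjunction $L\pi^\star\dashv R\pi_\star$ (using the same identity to describe the test sheaves on $\sX$) yields (3). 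It remains to unpack the two orthogonality requirements: given (1), $\Hom(E,C)=0$ for $C\in C_2^{>0}$ is automatic, while $\Hom(D,E)=0$ for all $D\in C_2^{<0}$ reduces — by the elementary computation $\Hom_{D^b(\sX)}(\mathcal{F}[k],E)=0$ for every sheaf $\mathcal{F}\in C_2$ and $k\ge 2$, together with $\Hom_{D^b(\sX)}(\mathcal{F}[1],E)\cong\Hom_{\Coh(\sX)}(\mathcal{F},H_1(E))$ — to exactly (4). The converse is this same chain of equivalences read in reverse.

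The main obstacle will be the degree bookkeeping in the previous paragraph: the conditions cutting out $\Per^{-1}(X/Y)$ admit several equivalent formulations, and one has to fix the one that, after applying $R\pi_\star=\pi_\star$ and the Leray identity, reproduces (2)--(3) with every homological/cohomological shift correct and with the perversity $p=-1$ (rather than $p=0$) in the right place. The one genuinely stacky point is the $\Hom$-vanishing (3): because $\pi_\star$ is not faithful on $\Coh(\sX)$, transporting Bridgeland's $\Hom$-condition from $X$ to a condition phrased on $\sX$ via adjunction needs care, and it is exactly this step that distinguishes conditions (3) and (4). Everything else is the formal gluing of $t$-structures along the recollement $C_2\hookrightarrow D^b(\sX)\xrightarrow{R\pi_\star}D^b(X)$, as in \cite{AC}, together with exactness of $\pi_\star$.
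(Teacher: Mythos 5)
Your argument is correct, but note that the paper itself gives no proof of this statement: it is quoted (as Lemma 3.3.1) from \cite{AC}, so there is no in-paper argument to compare against; what you wrote is essentially the argument underlying the cited lemma. Your route — exactness of $\pi_\star$ for the coarse moduli map in characteristic zero, the resulting identifications $H_i(R\pi_\star E)=\pi_\star H_i(E)$ and $R^j\psi_\star=R^j\overline{\psi}_\star\circ\pi_\star$, the extremal-truncation argument against the two orthogonality conditions to get the amplitude statement (1), Bridgeland's cohomological description of $\Per^{-1}(X/Y)$ from \cite{Bridgeland} to get (2)--(3), and the reduction of the $C_2^{<0}$-orthogonality to (4) via $\Hom(F[k],E)=0$ for $k\geq 2$ and $\Hom(F[1],E)\cong\Hom(F,H_1(E))$ for $F\in C_2$ a sheaf — is the standard unwinding of the glued $t$-structure and goes through as you describe. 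One small caution: condition (3) as printed is garbled (the test objects should be sheaves $C$ on the coarse space $X$ with $R\overline{\psi}_\star C=0$); with that reading it is literally Bridgeland's $\Hom$-vanishing applied to $\pi_\star H_0(E)$, so no adjunction step through $L\pi^\star\dashv R\pi_\star$ is needed, and the ``stacky point'' you flag disappears once the statement is read correctly.
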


Recall that in \cite{Bridgeland}, \cite{AC}, the perverse sheaves can be obtained by tilting a torsion pair.  
We say that an object $E\in D(\sA)$ connects to $C_2$, denoted by  $E|C_2$ if $E$ satisfies the conditions: 
$\Hom(E,C)=0$ for all $C$ in $C_2^{>0}$ and $\Hom(D, E)=0$ for all $D$ in $C_2^{<0}$.
Let 
$$\sC=\{E\in \Coh(X)| R\overline{\psi}_{\star}E=0\}$$ and 
let
$$
\begin{array}{l}
\sTT=\{T\in \sA| R^1\overline{\psi}_{\star}(R\pi_{\star}T)=0; T|C_2 \};\\
\ssT=\{T\in \sA| R^1\overline{\psi}_{\star}(R\pi_{\star}T)=0, \Hom(T,\sC)=0,  T|C_2\};\\
\sFF=\{F\in \sA| R^0\overline{\psi}_{\star}(R\pi_{\star}T)=0; \Hom(\sC, F)=0,  F|C_2 \};\\
\ssF=\{F\in \sA| R^0\overline{\psi}_{\star}(R\pi_{\star}T)=0;  F|C_2\}
\end{array}
$$
Then $(^{p}\sT, ^{p}\sF)$ is a torsion pair on $\sA$ for $p=-1,0$ and a tilt of $\sA$ with respect to the torsion pair is the category of perverse coherent sheaves $\sPA:=\Per^{p}(\sX/Y)$.   Then every element $E\in \sPA$ fits into the exact sequence:
\begin{equation}\label{exact:sequence:sPA}
F[1]\hookrightarrow E\twoheadrightarrow T
\end{equation}
with $F\in \sPF$ and $T\in \sPT$.

From Bridgeland \cite{Bridgeland} and Abramovich-Chen \cite{AC}, the category of perverse sheaves forms a heart of 
$t$-structure on $D^b(\sX)$. 
Usually there are actually two perversities $p=-1, 0$.

\subsection{Derived equivalence}
Let $\phi: \sX\dasharrow \sX^\prime$ be an orbifold flop,  in this section we prove, following the method  of \cite{Bridgeland}, \cite{AC},  that there is an equivalence between derived categories:
\begin{equation}
\Phi: D^b(\sX)\to D^b(\sX^\prime)
\end{equation}
by the Fourier-Mukai transformation and 
$$\Phi(\Per^{-1}(\sX/Y))=\Per^0(\sX^\prime/Y).$$

\subsubsection{Perverse point ideal sheaves}

\begin{defn}\label{perverse:ideal:sheaf}
A perverse ideal sheaf $F\in\sPA$ is a sheaf such that it fits into the exact sequence
$$0\rightarrow F\longrightarrow \sO_{\sX}\longrightarrow E\rightarrow 0$$
in $\sPA$.  The object $E$ is called the ``perverse structure sheaf".  A perverse point sheaf
is a perverse structure sheaf such that it is numerically equivalent to the structure sheaf of a point.  
\end{defn}

We have a similar proposition as in \cite[Lemma 3.3.3]{AC}.
\begin{prop}\label{prop:perverse:ideal:sheaf}
A perverse ideal sheaf is a sheaf. A sheaf $F\in\Coh(\sX)$ is a perverse ideal sheaf if and only if it satisfies the following conditions:
\begin{enumerate}
\item $R\pi_{\star}F$ is a perverse ideal sheaf of $\overline{f}: X\to Y$;
\item $\Hom(D, F)=0$ for any sheaf $D\in C_2$. 
\end{enumerate}
\end{prop}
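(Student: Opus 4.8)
The plan is to follow the argument of \cite[Lemma~3.3.3]{AC}. Three inputs will be used throughout: the classification of perverse coherent sheaves on $\sX$ recorded above, together with Bridgeland's analogous description over $\overline{\psi}\colon X\to Y$ from \cite{Bridgeland}; the fact that $t(p,0)$ is obtained by gluing along $R\pi_{\star}$, so that $R\pi_{\star}$ is $t$-exact and restricts to an exact functor $\sPA\to\Per^{p}(X/Y)$ with $R\pi_{\star}\sO_{\sX}=\sO_{X}$, while $R\pi_{\star}F=\pi_{\star}F$ for coherent $F$; and the Serre-duality vanishing $R\Hom_{\sX}(K,\sO_{\sX})\cong R\Gamma(\sX,K)^{\vee}[-3]\cong R\Gamma(X,R\pi_{\star}K)^{\vee}[-3]=0$ for every $K\in C_{2}$ (here $\sX$ is proper and Calabi-Yau, so $\omega_{\sX}\cong\sO_{\sX}$).

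First, a perverse ideal sheaf $F$ is a sheaf: from $0\to F\to\sO_{\sX}\to E\to 0$ in $\sPA$ we obtain the triangle $F\to\sO_{\sX}\to E\to F[1]$ in $D^{b}(\sX)$, and taking standard cohomology, using that $\sO_{\sX}$ is a sheaf and $H_{i}(E)=0$ for $i\neq 0,1$, the long exact sequence forces $H_{1}(F)=0$; with $F\in\sPA$ this gives $F\in\Coh(\sX)$. For necessity, suppose $F$ is a perverse ideal sheaf, hence a sheaf which is a subobject of $\sO_{\sX}$ in $\sPA$ with quotient the perverse structure sheaf $E$. Applying the exact functor $R\pi_{\star}$ gives an exact sequence $0\to\pi_{\star}F\to\sO_{X}\to R\pi_{\star}E\to 0$ in $\Per^{p}(X/Y)$, so $R\pi_{\star}F$ is a perverse ideal sheaf of $\overline{\psi}$, which is (1). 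For (2), let $D\in C_{2}$ be a sheaf and apply $\Hom(D,-)$ to $E[-1]\to F\to\sO_{\sX}$: since $D$ is supported on the at most one-dimensional locus where $\pi$ fails to be an isomorphism it is torsion, so $\Hom(D,\sO_{\sX})=0$ as $\sO_{\sX}$ is torsion-free, while $D[1]\in C_{2}^{<0}$ and $E\mid C_{2}$ give $\Hom(D,E[-1])=\Hom(D[1],E)=0$; the long exact sequence yields $\Hom(D,F)=0$.

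For sufficiency, let $F\in\Coh(\sX)$ satisfy (1) and (2). First $F\in\sPA$: this follows from the classification Lemma, since $F$ is a sheaf and by (1) $R\pi_{\star}F=\pi_{\star}F$ is a perverse ideal sheaf — hence a perverse sheaf — for $\overline{\psi}$ (the orthogonality $F\mid C_{2}$ is automatic for a sheaf because the relevant $\Ext$-groups lie in negative degree, and $R^{1}\psi_{\star}F=R^{1}\overline{\psi}_{\star}(\pi_{\star}F)=0$ by perversity of $\pi_{\star}F$). Next, by (1) there is an exact sequence $0\to\pi_{\star}F\xrightarrow{\,j\,}\sO_{X}\to\overline{E}\to 0$ in $\Per^{p}(X/Y)$. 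Completing the counit $L\pi^{\star}\pi_{\star}F\to F$ to a triangle $L\pi^{\star}\pi_{\star}F\to F\to K\to L\pi^{\star}\pi_{\star}F[1]$, one has $K\in C_{2}$ because $R\pi_{\star}$ of the counit is an isomorphism; using $R\Hom_{\sX}(K,\sO_{\sX})=0$, the morphism $L\pi^{\star}j\colon L\pi^{\star}\pi_{\star}F\to\sO_{\sX}$ lifts uniquely to $\iota\colon F\to\sO_{\sX}$ with $R\pi_{\star}\iota=j$. Put $E=\Cone(\iota)$; then $R\pi_{\star}E=\Cone(j)=\overline{E}$ is perverse on $X$, and from the triangle $F\xrightarrow{\,\iota\,}\sO_{\sX}\to E\to F[1]$ one checks $E\mid C_{2}$: orthogonality to $C_{2}^{>0}$ is automatic, while orthogonality to $C_{2}^{<0}$ is exactly where (2) is used, since it kills $\Hom(D[1],E)$ through the injection $\Hom(D[1],E)\hookrightarrow\Hom(D[1],F[1])\cong\Hom(D,F)$ for sheaves $D\in C_{2}$. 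Hence $E\in\sPA$, the triangle is a short exact sequence $0\to F\to\sO_{\sX}\to E\to 0$ in $\sPA$, and $F$ is a perverse ideal sheaf.

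The substantive point, and the main obstacle, is the last step: getting the cone $\Cone(\iota)$ back into $\sPA$. Its perversity over $X$ is transported for free by $R\pi_{\star}$, but orthogonality to $C_{2}^{<0}$ is a genuine constraint, and condition (2) is precisely what guarantees it; the Serre-duality vanishing $R\Hom_{\sX}(C_{2},\sO_{\sX})=0$ is the auxiliary fact that both produces the lift $\iota$ of $j$ and makes it unique, so that $R\pi_{\star}\iota=j$ rigidifies the resulting cone.
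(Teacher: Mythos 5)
Your proof is correct, and there is nothing in the paper to compare it with line by line: the paper states this proposition without proof, as the stack analogue of \cite[Lemma 3.3.3]{AC}. What you have written is essentially the Abramovich--Chen argument transported to this setting, so it supplies exactly the details the paper omits: a perverse ideal sheaf is a sheaf by the long exact sequence of standard cohomology; necessity of (1) follows since $R\pi_{\star}$ is exact from $\sPA$ to $\Per^{p}(X/Y)$ with $R\pi_{\star}\sO_{\sX}=\sO_{X}$, and necessity of (2) from torsion-freeness of $\sO_{\sX}$ together with $\Hom(C_2^{<0},E)=0$; sufficiency by lifting $j\colon\pi_{\star}F\hookrightarrow\sO_{X}$ across the counit triangle using $R\Hom_{\sX}(K,\sO_{\sX})=0$ for $K\in C_{2}$, and then verifying that $E=\Cone(\iota)$ lies in $\sPA$, with condition (2) doing precisely the work of killing $\Hom(D[1],E)$. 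Two minor points are worth making explicit. First, orthogonality of $E$ against all of $C_{2}^{<0}$, not only against shifts $D[1]$ of sheaves $D\in C_{2}$, follows by truncating: since $\pi_{\star}$ is exact on coherent sheaves in characteristic zero, $C_{2}$ is closed under taking standard cohomology, and the contribution from $\tau^{\leq -2}$ vanishes for degree reasons because $E$ has standard cohomology only in degrees $-1,0$. Second, the Serre-duality vanishing $R\Hom_{\sX}(K,\sO_{\sX})\cong R\Gamma(\sX,K)^{\vee}[-3]=0$ uses properness of the smooth Calabi--Yau stack $\sX$, which is available here because $Y$ is projective and $\psi$ is proper; this Calabi--Yau/Serre-duality input is the same one underlying the constructions in \cite{Bridgeland} and \cite{AC}, so your use of it is consistent with the source the paper cites.
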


Perverse point sheaves and perverse point-ideal sheaves are simple objects, which satisfy the following properties.
Let $E_1, E_2$ be two perverse point sheaves. Then 
\[
\Hom(E_1,E_2)=
\begin{cases}
0,& E_1\ncong E_2;\\
\cc, & E_1\cong E_2.
\end{cases}
\]
Similarly let $F_1, F_2$ be two perverse point-ideal sheaves. Then 
\begin{equation}\label{point-ideal:sheaves:simple}
\Hom(F_1,F_2)=
\begin{cases}
0,& F_1\ncong F_2;\\
\cc, & F_1\cong F_2.
\end{cases}
\end{equation}

\subsubsection{Moduli of perverse point sheaves}

Let 
$$\sM(\sX/Y): \mbox{Sch}\to \mbox{Sets}$$
be the functor that sends a scheme $S$ to the set of equivalence classes of families of perverse point sheaves parametrized by 
$S$. The functor $\sM(\sX/Y)$ can be taken as the moduli functor of equivalence classes of perverse point-ideal sheaves. 
From (\ref{point-ideal:sheaves:simple}), the automorphism groups of perverse point-ideal sheaves are $\cc^\star$. 
Then the moduli functor  $\sM(\sX/Y)$ is represented by a fine moduli space  $M(\sX/Y)$.  As in \cite[Lemma 4.1.1]{AC}, the moduli space $M(\sX/Y)$ is separated. 

Let $W\subset M(\sX/Y)$ be the distinguished component, which is birational to $Y$.  We want to prove that $W$ is isomorphic to the smooth DM stack $\sX^\prime$ in the orbifold flop diagram:
\[
\xymatrix{
&\sZ\ar[dl]_{f}\ar[dr]^{f^\prime}&\\
\sX\ar@{-->}[rr]&&\sX^\prime.
}
\]

\begin{prop}\label{morphismXprimeW}
There exists a birational morphsim 
$\sX^\prime\to W$.
\end{prop}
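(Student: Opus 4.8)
The plan is to construct the morphism $\sX^\prime\to W$ by exhibiting a family of perverse point sheaves on $\sX$ parametrized by $\sX^\prime$, using the Fourier--Mukai kernel built from the common blow-up $\sZ$. Concretely, I would first recall that $\sZ=\widetilde{\sZ}/\mu_n$ carries the two crepant morphisms $f:\sZ\to\sX$ and $f^\prime:\sZ\to\sX^\prime$, and that (by the usual argument for flops, as in \cite{Bridgeland} and \cite{AC}) the functor $\FM(-)=Rf^\prime_{\star}Lf^{\star}(-)$, together with its companion $\Psi(-)=Rf_{\star}Lf^{\prime\star}(-)$, are inverse equivalences $D^b(\sX)\leftrightarrows D^b(\sX^\prime)$ up to a twist. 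The point is local over $Y$: away from the flopping locus $\psi,\psi^\prime$ are isomorphisms, so the only thing to check is over the singular point $p\in Y$, where everything reduces to the explicit toric model $\widetilde{\sX},\widetilde{\sX}^\prime$ of \S\ref{orbifold-flop}, equivariantly under $\mu_n$.

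The key step is to show that for each closed point $x^\prime\in\sX^\prime$ (more precisely, each residue gerbe), the object $\Psi(\sO_{x^\prime})\in D^b(\sX)$ — or rather the cone of the natural map $\sO_\sX\to \Psi(\sO_{x^\prime})$ after the appropriate shift/normalization — is a \emph{perverse point sheaf} in the sense of Definition \ref{perverse:ideal:sheaf}, i.e. it lies in $\sPA=\Per^{p}(\sX/Y)$ and is numerically equivalent to the structure sheaf of a point. For $x^\prime$ lying over a point of $Y$ away from $p$ this is immediate since $\Psi$ is locally just the identity and $\sO_{x^\prime}$ is already a genuine skyscraper. For $x^\prime$ on the exceptional locus $Z^\prime\cong\bP(\mathbf{b})/\mu_n$ one computes $\Psi(\sO_{x^\prime})$ directly in the toric local model: the relevant $Rf_{\star}$ of a twisted structure sheaf on a fibre of $\sZ\to\sX^\prime$ is a two-term complex concentrated in degrees $0$ and $1$, and one checks conditions (1)--(4) of the Abramovich--Chen classification lemma (vanishing of $R^1\psi_{\star}H_0$, of $R^0\psi_{\star}H_1$, the $\Hom$-vanishing against $\sC$, and connection to $C_2$) using $R\psi_{\star}\sO_{\sX}=\sO_Y$ and the HL-type age computation of Proposition \ref{HL:condition}. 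This is where the perversity bookkeeping $q=-(p+1)$ enters: the fibres over $Z^\prime$ contribute the piece $H_1$, which is exactly why we land in $\Per^{-1}$ on one side and $\Per^0$ on the other.

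Granting that $\Psi$ carries point sheaves on $\sX^\prime$ to perverse point sheaves on $\sX$ in a flat family (flatness over $\sX^\prime$ follows from cohomology-and-base-change applied to the kernel on $\sZ\times_Y\sX^\prime$, since the fibre dimensions of $f$ are constant along each stratum), the universal property of the fine moduli space $M(\sX/Y)$ (representability was recorded just above, using that perverse point-ideal sheaves are simple by (\ref{point-ideal:sheaves:simple})) yields a classifying morphism $\sX^\prime\to M(\sX/Y)$. Since over the dense open $Y\setminus\{p\}$ this map is the identity and $W$ is the distinguished component birational to $Y$, the image lands in $W$ and the morphism $\sX^\prime\to W$ is birational. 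The main obstacle I expect is the explicit local computation on the orbifold exceptional locus: verifying that $\Psi(\sO_{x^\prime})$ really is perverse (not just an object with the right numerical class) when $\mathbf{a}\neq\mathbf{b}$ and the HL condition fails, since then the normal bundle $(\oplus_i\sO_{\bP(\mathbf{a})}(-b_i))/\mu_n$ is genuinely asymmetric and one must be careful that the gluing $t$-structure $t(p,0)$ interacts correctly with the cyclic quotient — this is the step that genuinely uses the Calabi--Yau condition $a_0+a_1=b_0+b_1$ and the coprimality $\gcd(a_0,a_1,b_0,b_1)=1$ rather than just formal properties of Fourier--Mukai transforms.
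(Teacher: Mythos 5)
Your overall skeleton is the same as the paper's: exhibit a family of perverse point sheaves on $\sX$ parametrized by $\sX^\prime$, and then use that $M(\sX/Y)$ is a fine moduli space to get a classifying morphism which, being an isomorphism away from the exceptional locus, lands birationally in the distinguished component $W$. The differences in execution, however, contain real gaps. First, you open by ``recalling'' that $\FM$ and $\Psi=Rf_\star Lf^{\prime\star}$ are inverse equivalences; for these orbifold flops that equivalence is precisely what this section is in the process of establishing (it is deduced afterwards from $W\cong\sX^\prime$ by the BKR-type argument), so as phrased the argument is circular. This could be repaired, since all you actually use is the kernel $\sZ$ and the computation of $\Psi(\sO_{x^\prime})$, but you should not invoke the equivalence.

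The more serious gap is in passing from a fiberwise statement to a family. Checking that $\Psi(\sO_{x^\prime})$ is a perverse point sheaf for each closed point $x^\prime$ does not by itself produce an $\sX^\prime$-valued point of the moduli functor: one needs an actual object on $\sX^\prime\times_Y\sX$, flat over $\sX^\prime$ in the appropriate sense, whose restriction to every fiber is a perverse point sheaf, and your justification of flatness (``fibre dimensions of $f$ are constant along each stratum'' plus cohomology and base change) is not an argument. This is exactly where the paper's key technical point lives and your proposal never touches it: the fiber product $\sZ$ carries an extra \emph{embedded component} along the exceptional locus, and the correct family is $\sO_{\sZ_{\red}}$ obtained after removing it; one then verifies in family the two conditions of Proposition \ref{prop:perverse:ideal:sheaf} --- condition (1) by pushing forward $I_{\sZ_{\red}}$ to the coarse space, and condition (2), the vanishing $\Hom(D,I)=0$ for $D\in C_2$, by a finite base change to the toric cover $\widetilde{\sX}$ and an analysis of the torsion of $i^\star I_T$ at the stacky points. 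Without addressing the embedded component and the $C_2$-orthogonality in family, the classifying map is not constructed. Two smaller points: the perverse point sheaf is $\Psi(\sO_{x^\prime})$ itself (the quotient of $\sO_{\sX}$ in $\sPA$), not ``the cone of $\sO_{\sX}\to\Psi(\sO_{x^\prime})$''; and Proposition \ref{HL:condition} (an age computation) plays no role in verifying perversity.
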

\begin{proof}
We construct a family of perverse point sheaves over $\sX^\prime$. 
The candidate for such a family is $\sZ$.  But $\sZ$ in this case contains an extra embedded component and we take the reduction of 
$\sZ$ by removing this component. 

It is sufficient to work on the local model in Diagram (\ref{diagram2}) of Definition \ref{local}.  In this case 
$\sZ=\widetilde{\sZ}/\mu_n$, where $\widetilde{\sZ}=\sO_{\pp(\mathbf{a})\times \pp(\mathbf{b})}(-1,-1)$.
The  reduction  $\sZ_{\red}$ is $\sZ$ modulo the exceptional locus $\pp(\mathbf{a})$. 
We show that the structure sheaf 
$\sO_{\sZ_{\red}}$ is a family of perverse point sheaves over $\sX^\prime$. 
Let 
$$\id\times\pi: \sX^\prime\times_{Y}\sX\to \sX^\prime\times_{Y}X$$
be the natural morphism.  We check that 
$(\id\times \pi)_{\star}I_{\sZ_{\red}}$ is a perverse ideal sheaf. This is the Condition (1) in Proposition \ref{prop:perverse:ideal:sheaf}. 

To check Condition (2) in Proposition \ref{prop:perverse:ideal:sheaf}, we need to prove that 
$\Hom(D, I_{\sZ_{\red}})=0$ for any $D\in C_2$. 
We use the method in \cite{AC}. Let 
$$p: \widetilde{\sX}\to \sX$$
be the finite morphism as in Definition \ref{local}, which taken as a base change.  We argue that 
$\Hom(D, I_{\sZ})=0$. 
Let 
$$p: \widetilde{\sX}\times_{\widetilde{Y}}\widetilde{\sX}^\prime
\to \widetilde{\sX}\times_{Y}\sX^\prime\hookrightarrow \widetilde{\sX}\times\sX^\prime$$
be the corresponding morphisms, where the first is finite, and the second is an embedding. 
Let the image be $T$.  Then we have
$$0\to I_{T}\rightarrow \sO_{\widetilde{\sX}\times\sX^\prime}\rightarrow \sO_{T}\to 0.$$
Let $i: p\hookrightarrow \sX^\prime$ be a point.  We prove that 
$i^\star I_{T}$ has torsion with support in pure dimension one and it can not have sections at the 
preimages of the stacky points of $\sX$ under $p$.  So 
$\Hom(D, I_{T})=0$ for any $D\in C_2$. 
\end{proof}

\subsubsection{The derived equivalence}

Since $W$ is the distinguished component of $M(\sX/Y)$, the universal perverse point sheaf $\mathcal{E}$ gives a diagram:
$$
\xymatrix{
D^b(W)\ar[rr]^{\Phi}\ar[dr]&& D^b(\sX)\ar[dl]\\
&D^b(Y)&
}
$$
To prove that 
$W\cong \sX^\prime$ and 
 $\Phi$ is an equivalence,  we already know from Proposition \ref{morphismXprimeW} there is a birational morphism 
 $\sX^\prime\to W$, we follow the lines in 
\cite[\S 6-7]{BKR} to prove that $W$ is smooth, $W\cong \sX^\prime$ and $\Phi$ is an equivalence sending perverse sheaves to perverse sheaves. We omit the details. 

\begin{example}
In stead of proving  the tedious construction as in \S 3 of \cite{AC}, and \S6, \S 7 of \cite{BKR}, we give an example of orbifold flop. 
Let 
$$\phi: \sX=\pp_{\pp(2,2)}(\sO(-1)\oplus \sX(-3))\dasharrow \sX^\prime=\pp_{\pp(1,3)}(\sO(-2)\oplus \sX(-2))$$
be an example of the local model.  For notational reason let $\sC=\pp(2,2)$ and $\sC^\prime=\pp(1,3)$ be the exceptional locus of 
$\sX$ and $\sX^\prime$ respectively, which are contracted to the singular point $P\in Y$.  Geometrically we can construct the flop 
$\sX^\prime$ as follows.  We can do weighted blow-up of $\sX$ along the exceptional locus $\sC$ and then blowing-down another exceptional curve to get $\sX^\prime$. 

From \cite{Bridgeland}, \cite{AC}, let $y\in \sC$ be a point such that 
$\overline{y}\in C:=\pp^1$ is its image in the maps between coarse moduli spaces:
$$
\xymatrix{
\sC\ar[r]^{\pi}\ar@{^{(}->}[d]& C\ar@{^{(}->}[d]\\
\sX\ar[r]^{\pi}& X.
}
$$

Then we have an exact sequence on $X$:
\begin{equation}\label{flop:exact:sequence1}
0\rightarrow \sO_{C}(-1)\longrightarrow \sO_{C}\longrightarrow \sO_{\overline{y}}\rightarrow 0,
\end{equation}
pulling back to $\sX$ we have the following exact sequence:
\begin{equation}\label{flop:exact:sequence2}
0\rightarrow \sO_{\sC}(-2)\longrightarrow \sO_{\sC}\longrightarrow \sO_{y}\rightarrow 0.
\end{equation}
As in \cite{AC} and \cite{Bridgeland}, the coherent sheaves $\sO_{\sC}(-1)$,  $\sO_{\sC}(-2)$ are not perverse, hence the exact sequences (\ref{flop:exact:sequence1}) and (\ref{flop:exact:sequence2}) do not define exact sequences in $\Per(\sX/Y)$. 
But the shifted ones $\sO_{\sC}(-1)[1]$,  $\sO_{\sC}(-2)[1]$ are  perverse sheaves,  and  we have:
\begin{equation}\label{flop:exact:sequence3}
0\rightarrow \sO_{\sC}\longrightarrow  \sO_{y}\longrightarrow \sO_{\sC}(-2)[1]\rightarrow 0.
\end{equation}
This makes $\sO_{y}$ is not stable in $\Per(\sX/Y)$. 
So the  flopping $\sX^\prime\to Y$ means that we can replace the exceptional curve 
$\sC$ by $\sC^\prime$ so that it parameterizes the extension 
\begin{equation}\label{flop:exact:sequence4}
0\rightarrow \sO_{\sC}(-2)[1]\longrightarrow  \mathcal{E}\longrightarrow \sO_{\sC}\rightarrow 0,
\end{equation}
which is stable in $\Per(\sX/Y)$.  
The moduli stack of perverse point sheaves $W=M(\sX/Y)$ parameterizes perverse point sheaves $\mathcal{E}$ on 
$\sX$.  Geometrically $\sX^\prime$ is obtained by replacing $\sC$ parameterizing the exact sequence (\ref{flop:exact:sequence3}) by 
$\sC^\prime$ parameterizing the exact sequence (\ref{flop:exact:sequence4}). 
\end{example}

\subsection{Moduli of perverse ideal sheaves.}

\subsubsection{$K$-theory class}\label{K-theory:basis}
Let $\sX$ be the smooth Calabi-Yau DM stack and $K_0(\sX)$ the Grothendieck group of $K$-theory with compactly support. 
Recall that in \cite{BCY}, two $F_1, F_2\in K_0(\sX)$ are numerically equivalent, i.e.
$$F_1\sim_{\tiny\mbox{num}}F_2$$
if 
$$\chi(E\otimes F_1)=\chi(E\otimes F_2)$$
for all locally free sheaves $E$ on $\sX$.  Recall that there is a Chern character map
$$\widetilde{\Ch}: K_0(\sX)\to H_{\tiny\mbox{CR}}^*(\sX)$$
from the K-group of $\sX$ to the Chen-Ruan cohomology of $\sX$, 
such that
$$\chi(F)=\int_{I\sX}\widetilde{\Ch}(F)\cdot \widetilde{Td}(\sX).$$
So ``numerical equivalence" means that their associated Chow group classes are the same. 
Let 
$$K(\sX):=K_0(\sX)/\sim_{\tiny\mbox{num}}.$$
There is a natural filtration 
$$F_0(K(\sX))\subset F_1(K(\sX))\subset \cdots \subset K(\sX)$$
which is given by the dimension of the support of coherent sheaves.

\subsubsection{Hilbert scheme of sub-stacks.}
Let $\alpha\in K(\sX)$. We define $\Hilb^{\alpha}(\sX)$ to be the category of families of sub-stacks 
$\sZ\subset \sX$ having $[\sO_{\sZ}]=\alpha$.  From \cite{BCY}, \cite{OS},  $\Hilb^{\alpha}(\sX)$ is represented by a scheme which we still denote it by $\Hilb^{\alpha}(\sX)$. 
Let $\mathcal{I}_{\sZ}$ be the ideal sheaf of $\sZ$ in $\sO_{\sX}$, then we can take $\Hilb^{\alpha}(\sX)$ to be the moduli space of ideal sheaves $\mathcal{I}_{\sZ}$ with $[\sO_{\sZ}]=\alpha$.  In the case that $\sX$ is a smooth scheme, this is the original $\DT$-moduli space, see \cite{Thomas}, \cite{MNOP1}.

\subsubsection{Stable pairs}\label{stable:pair}
For the Calabi-Yau threefold stack $\sX$, generalizing the definition of Pandharipande-Thomas \cite{PT}, a stable pair 
$[\sO_{\sX}\stackrel{s}{\longrightarrow}F]$ is an object in $D^b(\sX)$, such that 
\begin{enumerate}
\item $\dim\Supp(F)\leq 1$ and $F$ is pure;
\item $\Coker(s)$ is zero dimensional.
\end{enumerate}

The stable pairs lies in the heart of a $t$-structure constructed in \cite{Bridgeland11}. 
As in \cite{Bridgeland11}, let 
$$\sP:=\Coh_{0}(\sX)\subset \sA:=\Coh(\sX)$$
be the sub-category consisting of sheaves supported on dimension zero.  Let 
$$\sQ=\{E\in\sA| \Hom(P,E)=0 ~\text{for}~ P\in\sP\}.$$
Then $(\sP, \sQ)$ is a torsion pair:
\begin{enumerate}
\item  if $P\in \sP$ and $Q\in \sQ$, then $\Hom_{\sA}(P,Q)=0$;
\item Every $E\in \sA$ fits into a short exact sequence
$$0\rightarrow P\longrightarrow E\longrightarrow Q\rightarrow 0$$
with $P\in\sP$ and $Q\in \sQ$. 
\end{enumerate}
A new $t$-structure on $D^b(\sX)=D(\sA)$ is defined by tilting the standard $t$-structure, see \S 2.2 of \cite{Bridgeland11}, or \cite{HRS}. The heart $\sA^{\#}$ of this new $t$-structure is given by:
$$\sA^{\#}=\{E\in D(\sA)| H_0(E)\in \sQ, H_1(E)\in \sP, H_i(E)=0 \text{`for~} i\notin\{0,1\}\}.$$
We have $\sQ=\sA\cap\sA^{\#}$ and $\sO_{\sX}\in \sA^{\#}$. Bridgeland \cite{Bridgeland11} proves the following result:
\begin{prop}
A stable pair $[\sO_{\sX}\stackrel{s}{\longrightarrow}F]$ is a epimorphism $\sO_{\sX}\twoheadrightarrow F$ in $\sA^{\#}$ with 
$\dim\Supp(F)\leq 1$ and $F\in \sQ$. 
\end{prop}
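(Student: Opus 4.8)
The plan is to transplant Bridgeland's argument \cite[\S2.2]{Bridgeland11} to the Calabi--Yau Deligne--Mumford stack $\sX$; the only point where the stack structure is actually felt is the behaviour of the structure sheaf $\sO_{\sX}$. Given a candidate stable pair $[\sO_{\sX}\stackrel{s}{\longrightarrow}F]$, attach to it the two-term complex $J:=[\sO_{\sX}\stackrel{s}{\longrightarrow}F]=\Cone(s)[-1]$, with $\sO_{\sX}$ in degree $0$, whose cohomology sheaves (computed in $\sA=\Coh(\sX)$) are $H^{0}(J)=\ker(s)$ and $H^{1}(J)=\Coker_{\sA}(s)$. The image of $s$ is a quotient of $\sO_{\sX}$, hence the structure sheaf $\sO_{Z}$ of a closed substack $Z\subseteq\sX$, so $\ker(s)=I_{Z}$ is an ideal sheaf; since $\sX$ is smooth, $\sO_{\sX}$ (and therefore $I_{Z}$) has no nonzero zero-dimensional subsheaf, so $\ker(s)\in\sQ$ \emph{for every} such $s$. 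By the description of $\sA^{\#}$ it then follows that $J\in\sA^{\#}$ if and only if the remaining cohomology sheaf $\Coker_{\sA}(s)$ lies in $\sP=\Coh_{0}(\sX)$, i.e.\ if and only if $\Coker_{\sA}(s)$ is zero-dimensional.

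To match the two formulations, I would observe first that if $F$ is pure of dimension $\leq 1$ then it has no nonzero zero-dimensional subsheaf, whence $\Hom(P,F)=0$ for all $P\in\sP$ and $F\in\sQ=\sA\cap\sA^{\#}$; conversely $F\in\sQ$ together with $\dim\Supp F\leq 1$ forces $F$ to be pure of dimension $\leq 1$. Since $\sO_{\sX}\in\sA^{\#}$ (recalled above) and, when $F\in\sQ$, also $F\in\sA^{\#}$, the map $s$ is a morphism in $\sA^{\#}$, and rotating the triangle $\sO_{\sX}\stackrel{s}{\longrightarrow}F\to\Cone(s)\to\sO_{\sX}[1]$ to $J\to\sO_{\sX}\stackrel{s}{\longrightarrow}F\to J[1]$ shows that $s$ is an epimorphism $\sO_{\sX}\twoheadrightarrow F$ in $\sA^{\#}$ if and only if $J\in\sA^{\#}$, which by the previous paragraph is exactly the requirement that $\Coker_{\sA}(s)$ be zero-dimensional. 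Assembling these equivalences: $[\sO_{\sX}\stackrel{s}{\longrightarrow}F]$ is a stable pair precisely when $F\in\sQ$, $\dim\Supp F\leq 1$, and $s$ is an epimorphism $\sO_{\sX}\twoheadrightarrow F$ in $\sA^{\#}$, in which case $J$ is its kernel and $0\to J\to\sO_{\sX}\to F\to 0$ is a short exact sequence in $\sA^{\#}$. As the statement is ultimately about the associated moduli functors, I would finish by checking that all of the above is compatible with base change: over a test scheme one works with $\sO_{\sX\times S}\to\mathcal F$, $\mathcal F$ flat over $S$, and $\Cone$, the truncation triangles and $\sA^{\#}$-cohomology all commute with restriction to fibres; this is routine and parallel to \cite{Bridgeland11}, \cite{Ba}.

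I expect the main obstacle to be the careful comparison of the cokernel taken in the tilted heart $\sA^{\#}$ with the ordinary cokernel --- concretely, proving that $\Coker_{\sA^{\#}}(s)$ equals the torsion-free quotient $\Coker_{\sA}(s)/T_{0}(\Coker_{\sA}(s))$, where $T_{0}(-)$ denotes the maximal zero-dimensional subsheaf. This comes down to plugging the torsion-pair decomposition $0\to T_{0}(\Coker_{\sA}(s))\to\Coker_{\sA}(s)\to\Coker_{\sA}(s)/T_{0}(\Coker_{\sA}(s))\to 0$ into the long exact sequence of $\sA^{\#}$-cohomology attached to the truncation triangle of $\Cone(s)$ and keeping scrupulous track of the degree shifts; once it is in hand, the chain of equivalences above closes up. The only genuinely stack-specific ingredient is the elementary fact used above that $\sO_{\sX}$ has no zero-dimensional subsheaf and that its quotients are structure sheaves of closed substacks, which follows from smoothness of $\sX$ exactly as in the scheme case.
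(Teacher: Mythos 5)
Your proof is correct and is essentially the argument the paper itself relies on: the paper does not reprove this statement but quotes it from Bridgeland \cite{Bridgeland11}, and your tilting argument (since $\sO_{\sX}$ has no nonzero zero-dimensional subsheaf, $\ker(s)\in\sQ$ automatically, so the two-term complex lies in $\sA^{\#}$ exactly when $\Coker_{\sA}(s)\in\sP$, which by rotating the triangle is exactly the condition that $s$ be an epimorphism in $\sA^{\#}$) is precisely Bridgeland's proof of his Lemma 2.3 transplanted to the stack $\sX$, with the comparison of $\sA^{\#}$- and $\sA$-cokernels you worry about at the end not actually needed. The only point to watch is the purity convention: ``$F$ pure with $\dim\Supp(F)\leq 1$'' must be read as pure of dimension one (a nonzero zero-dimensional sheaf is pure but does not lie in $\sQ$), exactly as in Pandharipande--Thomas and \cite{Bridgeland11}.
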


Fixing $[\sO_{F}]=\beta\in K(\sX)$, let $\PT^\beta(\sX)$ be the moduli stack of stable pairs, parameterizing the objects 
$[\sO_{\sX}\stackrel{s}{\longrightarrow}F]$ satisfying the conditions in the definition.  From \cite{Ba}, it is represented by a scheme $\PT^\beta(\sX)$.

\subsubsection{DT-type invariants}
\begin{defn}
The DT-invariant of $\sX$ in the class $\alpha\in K(\sX)$ is defined by the weighted Euler characteristic 
$$\DT_{\alpha}(\sX)=\chi(\Hilb^{\alpha}(\sX),\nu_{H}),$$
where 
$$\nu_{H}: \Hilb^{\alpha}(\sX)\to \zz$$
is the Behrend function in \cite{Behrend}.  Similarly, 
the PT-invariant of $\sX$ in the class $\beta\in K(\sX)$ is defined by the weighted Euler characteristic 
$$\PT_{\beta}(\sX)=\chi(\Hilb^{\beta}(\sX),\nu_{\PT}),$$
where 
$$\nu_{\PT}: \PT^\beta(\sX)\to \zz$$
is the Behrend function of $\PT^\beta(\sX)$.
\end{defn}

\begin{rmk}
Both $\Hilb^\alpha(\sX)$ and $\PT^\beta(\sX)$ have symmetric obstruction theories in the sense of Behrend \cite{Behrend}.  If $\sX$ is compact, then the invariants defined by virtual fundamental class are the same as weighted Euler characteristic of Behrend, see Theorem 4.18 of \cite{Behrend}. 
\end{rmk}

\subsubsection{Partition function}

Define the $DT$-partition function by 
\begin{equation}\label{DT:partition:function}
\DT(\sX)=\sum_{\alpha\in F_1K(\sX)}\DT_{\alpha}(\sX)q^{\alpha}
\end{equation}
and the $PT$-partition function by
\begin{equation}\label{PT:partition:function}
\PT(\sX)=\sum_{\beta\in F_1K(\sX)}\PT_{\beta}(\sX)q^{\beta}
\end{equation}

The degree zero $DT$-partition function is defined by 
\begin{equation}\label{DT:partition:function:zero}
\DT_0(\sX)=\sum_{\alpha\in F_0K(\sX)}\DT_{\alpha}(\sX)q^{\alpha}
\end{equation}
and the reduced $DT$-partition function by
\begin{equation}\label{reduced:DT:partition:function}
\DT^\prime(\sX)=\frac{\DT(\sX)}{\DT_0(\sX)}.
\end{equation}

\section{The motivic Hall algebra.}\Label{section:motivic:Hall:algebra}
In this section we review the definition and construction of the motivic Hall algebra of Joyce and Bridgeland in \cite{Joyce07}, \cite{Bridgeland10}. Then we review the integration map from the motivic Hall algebra to the ring of functions of the quantum torus. 

\subsection{Motivic Hall algebra}
We briefly review the notion of motivic Hall algebra in \cite{Bridgeland10}, more details can be found in \cite{Bridgeland10}, \cite{Joyce07}. 

\begin{defn}
The Grothendieck ring of stacks  $K(\St/\cc)$ is defined to be the $\cc$-vector space spanned by isomorphism classes of Artin stacks of finite type over $\cc$ with affine stabilizers, modulo the relations:
\begin{enumerate}
\item for every pair of stacks $\sX_1$ and $\sX_2$ a relation:
$$[\sX_1\sqcup\sX_2]=[\sX_1]+[\sX_2];$$
\item for any geometric bijection $f: \sX_1\to \sX_2$, $[\sX_1]=[\sX_2]$;
\item for any Zariski fibrations $p_i: \sX_i\to \sY$ with the same fibers, $[\sX_1]=[\sX_2]$.
\end{enumerate}
\end{defn}
Let $[\aaa^1]=\ll$, the Lefschetz motive.  If $S$ is a stack of finite type over $\cc$, we define the relative Grothendieck ring of stacks $K(\St/S)$ as follows:

\begin{defn}\label{relative:Grothendieck:group}
The relative Grothendieck ring of stacks  $K(\St/\cc)$ is defined to be the $\cc$-vector space spanned by isomorphism classes of morphisms
$$[\sX\stackrel{f}{\rightarrow}S],$$
with $\sX$ an Artin stack over $S$ of finite type with affine stabilizers, modulo the following relations:
\begin{enumerate}
\item for every pair of stacks $\sX_1$ and $\sX_2$ a relation:
$$[\sX_1\sqcup\sX_2\stackrel{f_1\sqcup f_2}{\longrightarrow}S]=[\sX_1\stackrel{f_1}{\rightarrow}S]+[\sX_2\stackrel{f_2}{\rightarrow}S];$$
\item for any diagram:
$$
\xymatrix{
\sX_1\ar[rr]^{g}\ar[dr]_{f_1}&&\sX_2\ar[dl]^{f_2}\\
&S&,
}
$$
where $g$ is a 
geometric bijection, then $[\sX_1\stackrel{f_1}{\rightarrow}S]=[\sX_2\stackrel{f_2}{\rightarrow}S]$;
\item for any pair of Zariski fibrations 
$$\sX_1\stackrel{h_1}{\rightarrow} \sY;\quad \sX_2\stackrel{h_2}{\rightarrow} \sY$$
with the same fibers, and $g: \sY\to S$, a relation 
$$[\sX_1\stackrel{g\circ h_1}{\longrightarrow} S]=[\sX_2\stackrel{g\circ h_2}{\longrightarrow} S].$$
\end{enumerate}
\end{defn}
The motivic Hall algebra  in \cite{Joyce07} and \cite{Bridgeland10} is defined as follows.
Let $\sM$ be the moduli stack of coherent sheaves on $\sX$. It is an algebraic stack, locally of finite type over $\cc$. The motivic Hall algebra is the vector space 
$$H(\sA)=K(\St/\sM)$$
equipped with a non-commutative product given by the role:
$$[\sX_1\stackrel{f_1}{\longrightarrow} \sM]\star[\sX_2\stackrel{f_2}{\longrightarrow} \sM]=[\sZ\stackrel{b\circ h}{\longrightarrow} \sM],$$
where $h$ is defined by the following Cartesian square:
\[
\xymatrix{
\sZ\ar[r]^{h}\ar[d]&\sM^{(2)}\ar[r]^{b}\ar[d]^{(a_1,a_2)}&\sM \\
\sX_1\times\sX_2\ar[r]^{f_1\times f_2}& \sM\times \sM,&
}
\]
with $\sM^{(2)}$ the stack of short exact sequences in $\sA$, and 
the maps $a_1, a_2, b$ send a short exact sequence
$$0\rightarrow A_1\longrightarrow B\longrightarrow A_2\rightarrow 0$$
to sheaves $A_1$, $A_2$, and $B$ respectively. Then $H(\sA)$ is an algebra over 
$K(\St/\cc)$.

\subsection{The integration map}

Recall that in \S 3 of  \cite{Bridgeland10}, there exists maps of commutative rings:
$$K(\Sch/\cc)\to K(\Sch/\cc)[\ll^{-1}]\to K(\St/\cc),$$
where $K(\Sch/\cc)$ is the Grothendieck  ring of schemes of finite type over $\cc$. 
Since $H(\sA)$ is an algebra over $K(\St/\cc)$, define a $K(\Sch/\cc)[\ll^{-1}]$-module
$$H_{\reg}(\sA)\subset H(\sA)$$
to be the span of classes of maps $[X\stackrel{f}{\rightarrow}\sM]$ with $X$ a scheme.  An element of $H(\sA)$ is regular if it lies in $H_{\reg}(\sA)$. The following is Theorem 5.1 of \cite{Bridgeland10}.

\begin{thm}
The sub-module of regular elements of $H(\sA)$ is closed under the convolution product:
$$H_{\reg}(\sA)\star H_{\reg}(\sA)\subset H_{\reg}(\sA)$$
and is a $K(\Sch/\cc)[\ll^{-1}]$-algebra. Moreover, the quotient 
$$H_{\ssc}(\sA)=H_{\reg}(\sA)/(\ll-1)H_{\reg}(\sA)$$
is a commutative $K(\Sch/\cc)$-algebra.
\end{thm}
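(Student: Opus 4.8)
The plan is to follow Bridgeland's argument in \cite{Bridgeland10}, reducing both assertions to the local structure of the stack $\sM^{(2)}$ of short exact sequences together with the defining relations of the (relative) Grothendieck ring of stacks.

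\textbf{Closure under $\star$.} Since $H_{\reg}(\sA)$ is generated as a $K(\Sch/\cc)[\ll^{-1}]$-module by classes $[X\to\sM]$ with $X$ a scheme, it suffices to show that a product $[X_1\xrightarrow{f_1}\sM]\star[X_2\xrightarrow{f_2}\sM]$ of two such generators is regular. Let $\sF_1,\sF_2$ be the flat families of sheaves on $\sX$ classified by $f_1,f_2$. By upper semicontinuity of the dimensions of $\Hom$ and $\Ext^1$, I would stratify $X_1\times X_2$ into finitely many locally closed subschemes $S_i$ on which both $h_i:=\dim\Hom(\sF_2|_{x_2},\sF_1|_{x_1})$ and $e_i:=\dim\Ext^1(\sF_2|_{x_2},\sF_1|_{x_1})$ are constant. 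Over $S_i$, the restriction of the fibered product $\sZ=(X_1\times X_2)\times_{\sM\times\sM}\sM^{(2)}$ defining the convolution is isomorphic to a quotient stack $[\widetilde{Z}_i/\Ga^{h_i}]$, where $\widetilde{Z}_i\to S_i$ is the affine bundle of extension classes (a scheme, being Zariski-locally trivial) and $\Ga^{h_i}$ acts trivially; this is the family version of the identification of the fibre of $(a_1,a_2)\colon\sM^{(2)}\to\sM\times\sM$ over $(A_1,A_2)$ with $[\Ext^1(A_2,A_1)/\Hom(A_2,A_1)]$. Since $[B\Ga]=\ll^{-1}$ in $K(\St/\cc)$ and $\ll$ is invertible there, this gives
$$[X_1\xrightarrow{f_1}\sM]\star[X_2\xrightarrow{f_2}\sM]=\sum_i\ll^{-h_i}\,[\widetilde{Z}_i\to\sM]\in H_{\reg}(\sA),$$
a finite $K(\Sch/\cc)[\ll^{-1}]$-combination of classes of schemes over $\sM$. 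As the unit $[\spec\cc\to\sM]$ of $H(\sA)$ is visibly regular, $H_{\reg}(\sA)$ is a $K(\Sch/\cc)[\ll^{-1}]$-subalgebra.

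\textbf{Commutativity of $H_{\ssc}(\sA)$.} Being the quotient of $H_{\reg}(\sA)$ by the ideal generated by $\ll-1$, the semi-classical algebra $H_{\ssc}(\sA)$ is automatically a $K(\Sch/\cc)$-algebra; only commutativity needs an argument. Keeping the notation above, let $\sE_{12}$ be the total space of the (stratified) $\Ext^1$-bundle over $X_1\times X_2$ — a scheme — with $\mu_{12}\colon\sE_{12}\to\sM$ sending an extension class to the isomorphism class of its middle term, so that $\sum_i[\widetilde{Z}_i\to\sM]=[\sE_{12}\xrightarrow{\mu_{12}}\sM]$ and hence $[X_1]\star[X_2]\equiv[\sE_{12}\to\sM]\pmod{\ll-1}$. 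The zero section of $\sE_{12}$ is $X_1\times X_2$ mapping to $\sM$ by $(x_1,x_2)\mapsto\sF_1|_{x_1}\oplus\sF_2|_{x_2}$, while the complement of the zero section is a $\Gm$-bundle over the projectivized $\Ext^1$-bundle (rescaling an extension class does not change the middle term), so by the relations in $K(\St/\sM)$ its class is $(\ll-1)$ times a regular element. Therefore $[X_1]\star[X_2]\equiv[X_1\times X_2\xrightarrow{\oplus}\sM]\pmod{\ll-1}$, where $\oplus$ is the map just described; running the same computation for $[X_2]\star[X_1]$ and using $\sF_1|_{x_1}\oplus\sF_2|_{x_2}\cong\sF_2|_{x_2}\oplus\sF_1|_{x_1}$ yields the same class. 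Hence $H_{\ssc}(\sA)$ is commutative, with product induced by direct sum of sheaves.

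\textbf{Main obstacle.} The one genuinely structural input is the claim used in the first step that, over a stratum with constant $\dim\Hom$ and $\dim\Ext^1$, the fibered product $\sZ$ is a trivial-action $\Ga^{h_i}$-quotient of a scheme. This requires the explicit description of the moduli stack $\sM$ and of $\sM^{(2)}$ (via Quot schemes, or via the deformation theory governed by the cotangent complex), plus the check that the family of extension classes — a priori only a torsor under the vector bundle $\underline{\Ext}^1$ over the stratum — is Zariski-locally trivial, hence representable by a scheme. Everything else is bookkeeping with relations (1)--(3) of the (relative) Grothendieck ring and the invertibility of $\ll$ and $\ll^n-1$.
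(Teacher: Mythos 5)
The paper offers no proof of this statement at all: it is imported verbatim as Theorem 5.1 of Bridgeland's \emph{An introduction to motivic Hall algebras} \cite{Bridgeland10}, so there is no internal argument to compare yours against; what you have written is, in substance, a faithful reconstruction of Bridgeland's original proof. Your two steps are exactly his: stratify $X_1\times X_2$ by the dimensions of $\Hom$ and $\Ext^1$, use the identification of the fibres of $(a_1,a_2)\colon\sM^{(2)}\to\sM\times\sM$ with $[\Ext^1(A_2,A_1)/\Hom(A_2,A_1)]$ (trivial action) together with $[B\Ga]=\ll^{-1}$ to exhibit the product of two scheme classes as a $K(\Sch/\cc)[\ll^{-1}]$-combination of scheme classes, and then, modulo $\ll-1$, discard the complement of the zero section of the $\Ext^1$-bundle (a $\Gm$-bundle over its projectivization, hence divisible by $\ll-1$) so that the semiclassical product is induced by direct sum, which is symmetric. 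The only place where real work is hidden is the one you flag yourself, and it is slightly more than representability of the extension torsor: to apply the Zariski-fibration relation in $K(\St/\sM)$ you must know that the map to $\sM$ (taking the middle term) factors, up to 2-isomorphism, through the total space $\widetilde Z_i$ of the $\Ext^1$-bundle (and, in the second step, through the projectivized bundle), which requires local freeness and base change for the relative $\Hom$ and $\Ext^1$ sheaves on each stratum and the construction of a universal extension over $\widetilde Z_i$. That is precisely the technical content of Bridgeland's lemmas, so your sketch is correct as far as it goes, but it would need that lemma carried out to be a complete proof rather than a reduction to it.
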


The algebra $H_{\ssc}(\sA)$ is called the semi-classical Hall algebra. In \cite{Bridgeland10}, Bridgeland also defines a Poisson bracket on $H(\sA)$ by:
$$\{f, g\}=\frac{f\star g-g\star f}{\ll-1}.$$
This bracket preserves the subalgebra $H_{\reg}(\sA)$. 

Let $\Delta\subset F_1K(\sX)$ be the effective cone of $F_1K(\sX)$, that is, the collection of elements of the forms
$[E]$, where $E$ is a one-dimensional sheaf.  Define
$$\cc[\Delta]=\bigoplus_{\alpha\in \Delta}\cc\cdot x^{\alpha}$$
to be the ring generated by symbols $x^\alpha$ for $\alpha\in \Delta$, with product defined by:
$$x^\alpha\star x^{\beta}=(-1)^{\chi(\alpha,\beta)}\cdot x^{\alpha+\beta}.$$  
The ring is commutative since Euler form is skew-symmetric. The Poisson bracket is given by:
$$\{x^{\alpha}, x^{\beta}\}=(-1)^{\chi(\alpha,\beta)}\cdot \chi(\alpha,\beta)\cdot x^{\alpha+\beta}.$$
The following theorem is proved in \S 5.2 of \cite{Bridgeland10}.

\begin{thm}\label{thm:5.2}(Theorem 5.2, \cite{Bridgeland10})
Let $\nu: \sM\to\zz$ be the locally constructible Behrend function. Then there is a Poisson algebra homomorphism:
\begin{equation}\label{integration:map}
I: H_{\ssc}(\sA)\to \cc[\Delta]
\end{equation}
such that 
$$I([\sZ\stackrel{f}{\rightarrow}\sM_{\alpha}])=\chi(\sZ, f^\star\nu)\cdot x^{\alpha}.$$
\end{thm}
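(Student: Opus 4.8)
This is the integration map of Bridgeland \cite{Bridgeland10}, resting on Behrend \cite{Behrend} and Joyce--Song \cite{JS}; the plan is to recall the shape of the argument, which runs unchanged for Calabi--Yau threefold Deligne--Mumford stacks once the Behrend function identities are known in that generality.

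First I would define $I$ on a generator $[Z\stackrel{f}{\rightarrow}\sM_\alpha]$ of $H_{\ssc}(\sA)$, with $Z$ a scheme, by the prescribed formula $\chi(Z,f^\star\nu)\,x^\alpha$, and extend $\cc$-linearly (indeed $K(\Sch/\cc)$-linearly, letting $[S]$ act on $\cc$ through $\chi(S)$). What has to be checked is that $Z\mapsto\chi(Z,f^\star\nu)$ is a motivic measure on schemes over $\sM$: it is additive over disjoint unions; it is invariant under a geometric bijection $Z_1\to Z_2$ over $\sM$ (stratify the target so that the map becomes an isomorphism over each stratum, where it preserves the weighted Euler characteristic because $\nu$ is pulled back); and for a Zariski-locally-trivial fibration $p\colon Z\to Y$ with fibre $F$ it satisfies $\chi(Z,p^\star\mu)=\chi(F)\,\chi(Y,\mu)$ by multiplicativity of the topological Euler characteristic. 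These are the relations imposed in the relative Grothendieck ring, and the passage to $H_{\reg}(\sA)/(\ll-1)$ is harmless since $\chi(\aaa^n)=1$; hence $I$ is a well-defined linear map $H_{\ssc}(\sA)\to\cc[\Delta]$.

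Next I would prove multiplicativity. For generators $u=[X_1\stackrel{g_1}{\rightarrow}\sM_\alpha]$ and $v=[X_2\stackrel{g_2}{\rightarrow}\sM_\beta]$, unwind $u\star v=[\sZ\stackrel{h}{\rightarrow}\sM_{\alpha+\beta}]$ from the defining Cartesian square: $\sZ$ maps naturally to $X_1\times X_2$ remembering the sub-object and the quotient, and the fibre over a point with $g_1=A_1$, $g_2=A_2$ is the stack of short exact sequences $0\to A_1\to B\to A_2\to 0$, a vector-group gerbe (banded by $\Hom_\sX(A_2,A_1)$) over the affine space $\Ext^1_\sX(A_2,A_1)$; its class in $K(\St/\cc)$ is the power $\ll^{\,\dim\Ext^1_\sX(A_2,A_1)-\dim\Hom_\sX(A_2,A_1)}$, so $\sZ$ is regular. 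Because the middle term $B$ depends only on the line spanned by the extension class, $\nu_\sM(B)$ is constant along $\cc^\star$-orbits of $\Ext^1_\sX(A_2,A_1)$, and since $\chi(\cc^\star)=0$ the non-split extensions contribute nothing to the weighted Euler characteristic of the fibre; at $\ll=1$ the gerbe contributes $1$, so the fibre's weighted Euler characteristic is $\nu_\sM(A_1\oplus A_2)$ (with the stack conventions of \cite{JS}). The first Behrend function identity $\nu_\sM(A_1\oplus A_2)=(-1)^{\chi(\alpha,\beta)}\,\nu_\sM(A_1)\,\nu_\sM(A_2)$ then yields, on integrating over $X_1\times X_2$,
$$I(u\star v)=(-1)^{\chi(\alpha,\beta)}\,\chi(X_1,g_1^\star\nu)\,\chi(X_2,g_2^\star\nu)\,x^{\alpha+\beta}=I(u)\star I(v).$$

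Finally I would verify the Poisson identity. The commutator $u\star v-v\star u$ is divisible by $\ll-1$ in $H_{\reg}(\sA)$, and $I(\{u,v\})$ is read off from its $(\ll-1)$-linear term; now the two $\cc^\star$-bundles over $\mathbb{P}(\Ext^1_\sX(A_2,A_1))$ and $\mathbb{P}(\Ext^1_\sX(A_1,A_2))$ no longer disappear, since dividing by $\ll-1=[\cc^\star]$ promotes each to a factor $\chi$ of the relevant projective space, while the two $\ll$-exponents contribute their difference $\bigl(\dim\Ext^1_\sX(A_2,A_1)-\dim\Hom_\sX(A_2,A_1)\bigr)-\bigl(\dim\Ext^1_\sX(A_1,A_2)-\dim\Hom_\sX(A_1,A_2)\bigr)$, which by Serre duality on the Calabi--Yau threefold equals $\chi(\alpha,\beta)$. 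Using the second Behrend function identity (the one over $\mathbb{P}(\Ext^1)$) to handle the non-split terms, everything assembles to $I(\{u,v\})=(-1)^{\chi(\alpha,\beta)}\,\chi(\alpha,\beta)\,\chi(X_1,g_1^\star\nu)\,\chi(X_2,g_2^\star\nu)\,x^{\alpha+\beta}=\{I(u),I(v)\}$. The slick way to organize the last two steps together is to build first the $\ll$-deformed integration map from $H_{\reg}(\sA)$ to a quantum torus over $\cc[\ll^{\pm 1/2}]$, as a homomorphism of $\cc[\ll^{\pm 1/2}]$-algebras, and then specialize at $\ll=1$. The main obstacle --- the only ingredient that is not formal Hall-algebra bookkeeping --- is the pair of Behrend function identities; for Calabi--Yau threefold Deligne--Mumford stacks they hold exactly as for schemes, the moduli stack of objects of $D^b(\sX)$ carrying a symmetric obstruction theory ($(-1)$-shifted symplectic structure) by orbifold Serre duality, so the argument of \cite{Bridgeland10} goes through without essential change.
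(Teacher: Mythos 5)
Your proposal is correct and is essentially the argument the paper relies on: the paper does not reprove the statement but cites Theorem 5.2 of \cite{Bridgeland10}, whose proof is exactly what you reconstruct --- check the motivic relations for the weighted Euler characteristic, compute the convolution fibres as gerbes over $\Ext^1$, kill non-split extensions at $\ll=1$ by $\chi(\cc^\star)=0$, and feed the two Joyce--Song Behrend function identities (\cite{JS}, \S 10) into the product and the $(\ll-1)$-linear term for the Poisson bracket, with the dimension count matched to $\chi(\alpha,\beta)$ by Serre duality. The only non-formal input, as you say and as the paper's remark stresses, is the validity of those identities in the stacky setting, which the paper delegates to \cite{Bussi} and \cite{Jiang3}; your shifted-symplectic/orbifold Serre duality justification is the same mechanism. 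One caution: your ``slick'' aside about first constructing a $\cc[\ll^{\pm 1/2}]$-algebra homomorphism to a quantum torus with Behrend-function weights and then specializing at $\ll=1$ is not available as a routine step --- that refined map is precisely the conjectural motivic integration map of \cite{KS} discussed in the introduction, so the argument should stay at the semiclassical level as in \cite{Bridgeland10}.
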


\begin{rmk}
The proof of Theorem \ref{thm:5.2} relies on the Behrend function identities in \S 10 of \cite{JS}, which was originally proved for coherent sheaves by Joyce-Song \cite{JS}.  These identities was recently proved by V. Bussi \cite{Bussi} using algebraic method and also works in characteristic $p$, see \cite{Jiang3} for another method using Berkovich spaces. 
In \cite{Jiang4} we will generalize the integration map to the motivic level of the Behrend functions. 
\end{rmk}

\textbf{Integration map for $H(\sPA)$}:

For the abelian category of perverse coherent sheaves $\sPA$, we have a similar definition $H(\sPA)$, the motivic Hall algebra of $\sPA$. 
The semi-classical Hall algebra $H_{\ssc}(\sPA)$ can be similarly defined. 
Let $\sPM$ be the moduli stack of objects in the category $\sPA$.  There is an integration map
\begin{equation}\label{integration:map:sMP}
I: H_{\ssc}(\sPA)\to \cc[\Delta]
\end{equation}
such that 
$$I([\sZ\stackrel{f}{\rightarrow}\sPM_{\alpha}])=\chi(\sZ, f^\star\nu)\cdot x^{\alpha}.$$
Here $\nu: \sPM\to \zz$
is the Behrend function of $\sPM$. 
The proof of this morphism requires the Behrend function identities similar to \S 10 of \cite{JS}, \cite{Bussi}. 
Since the elements in  $\sPA$ are semi-Schur, i.e. for any $E\in\sPA$, 
$\Ext^i(E,E)=0$ for $i<0$,   in \cite{Jiang3} the author proves the Joyce-Song formula for the Behrend function identities  using Berkovich spaces.
Thus the integration map really exists in this case.

\section{DT-invariants identities under flops}\label{section:Hall:algebra:identity}

In this section we study the Hall algebra identities, following \cite{Bridgeland11} and \cite{Calabrese}, and prove the main result.

\subsection{Infinite-type Hall algebras} 
In this section we enlarge the definition of the Hall algebra, as in \S 4.2 of \cite{Bridgeland11} and \cite{Calabrese}. 
For the stack $\sM$, define infinite-type Grothendieck group 
$L(\St_{\infty}/S)$ by the symbols $[\sX\to S]$, but with $\sX$ only assumed to be locally of finite type over 
$S$.  Then we need to drop the relation $(1)$ in Definition \ref{relative:Grothendieck:group}.  The infinite-type Hall algebra is then 
$$H_\infty(\sA)=L(\St_{\infty}/\sM)$$
$$H_\infty(\sPA)=L(\St_{\infty}/\sPM).$$
\begin{rmk}
By working on infinite-type Hall algebra, we may not have integration map $I$ in (\ref{integration:map}) and 
(\ref{integration:map:sMP}).  We will have such an integration map $I$ in the Laurent Hall algebra 
$H_{\Lambda}\subset H_\infty$,  and $H(\sA)\subset H_{\Lambda}$. 
\end{rmk}

\subsection{Perverse Hilbert scheme}

Let $\sA_{\leq 1}\subset \sA$ be the full sub-category consisting of sheaves with support of $\dim\leq 1$.  Similarly, 
$\sPA_{\leq 1}\subset \sPA$ is the full sub-category consisting of perverse sheaves with support of $\dim\leq 1$. Let 
$H_\infty(\sA_{\leq 1})$ ($H_\infty(\sPA_{\leq 1})$) be the corresponding sub-Hall algebra. 

The first element in our formula is 
$$\srH_{\leq 1}\in H_\infty(\sA_{\leq 1}),$$
the Hilbert scheme of $\sX$, which parameterizes quotients 
$$\sO_{\sX}\twoheadrightarrow F$$
in $\sA_{\leq 1}$. 
Let $\sM_{\leq 1}\subset \sM$ be the moduli stack of coherent sheaves with support $\dim\leq 1$. Then 
$\srH_{\leq 1}$ is given by the morphism $\Hilb_{\leq 1}(\sX)\to \sM_{\leq 1}$. 

\begin{rmk}
If $\sO_{\sX}\twoheadrightarrow E$ is a quotient in $\sA_{\leq 1}$, then $E\in \sPT$.  This is because 
$E\in \sPT$, and the quotient of torsion is torsion.  So the morphism 
$$\Hilb_{\leq 1}(\sX)\to \sM_{\leq 1}$$
factors through the element $\srT_{\leq 1}$, which is represented by
$[\sPT\to \sM_{\leq 1}]$.  Hence $\srH_{\leq 1}\in H_{\infty}(\sPA_{\leq 1})$, since 
$\srT_{\leq 1}\in \sPM_{\leq 1}$. 
\end{rmk}

\subsection{Framed coherent sheaves}
Let $\sB\subset \sA$ be a sub-category.  We denote by $\mathds{1}_{\sB}$ the element of $H_\infty(\sA)$ represented by
the inclusion of stacks $\sB\subset \sM$, which is an open immersion. (Similar for $\sA_{\leq 1}$ and $\sPA_{\leq 1}$.)

Following \S 2.3 of \cite{Bridgeland11}, we define  $\sM_{\leq 1}^{\sO}$, the stack of framed coherent sheaves, which parametrizes coherent sheaves with a fixed section $\sO_{\sX}\to E$.  Then $\Hilb_{\leq 1}(\sX)$ is an open subscheme of 
$\sM_{\leq 1}^{\sO}$ by considering a surjective section
$[\sO_{\sX}\twoheadrightarrow E]\in \Hilb_{\leq 1}(\sX)$. We also have a forgetful morphism:
$$\sM_{\leq 1}^{\sO}\to \sM_{\leq 1}$$
by taking 
$[\sO_{\sX}\to E]$ to $E\in \sM_{\leq 1}$.  Given any open substack 
$\sB\subset \sM_{\leq 1}$, we have a Cartesian diagram:
\begin{equation}\label{diagram:framed}
\xymatrix{
\sB^{\sO}\ar[r]\ar[d]&\sM_{\leq 1}^{\sO}\ar[d]\\
\sB\ar[r]&\sM_{\leq 1},
}
\end{equation}
and $\mathds{1}_{\sB}^{\sO}\in H_\infty(\sA_{\leq 1})$. 

Similarly if $\sPB\subset \sPM_{\leq 1}$ is an open stack, then we have similar diagram as in (\ref{diagram:framed})
and an element $\mathds{1}_{\sPB}^{\sO}\in H_\infty(\sPA_{\leq 1})$.

Finally let $\PHilb_{\leq 1}(\sX/Y)$ be the ``perverse Hilbert scheme" parametrizing quotients of $\sO_{\sX}$ in 
$\sPA_{\leq 1}$. Then we have an element  $\srPH_{\leq 1}\in H_{\infty}(\sPA_{\leq 1})$. 

\subsection{Hall algebra identities.}

We prove several Hall algebra identities in this section, following the method in \cite{Calabrese}, \cite{Bridgeland11}. 

\begin{thm}\label{main:Hall:algebra:identity}
We have:
$$\srPH_{\leq 1}\star \mathds{1}_{\sPF[1]}=\mathds{1}_{\sPF[1]}^{\sO}\star \srH_{\leq 1}.$$
\end{thm}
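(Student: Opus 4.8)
The plan is to mimic the proof of the analogous identity in Bridgeland \cite{Bridgeland11} (his key lemma relating the Hilbert scheme to framed sheaves) and Calabrese \cite{Calabrese}, working now inside $H_\infty(\sPA_{\leq 1})$. The statement $\srPH_{\leq 1}\star \mathds{1}_{\sPF[1]}=\mathds{1}_{\sPF[1]}^{\sO}\star \srH_{\leq 1}$ should be proved by exhibiting an isomorphism of stacks over $\sPM_{\leq 1}$ between the two convolution products. Recall from \eqref{exact:sequence:sPA} that every $E\in\sPA$ sits in a short exact sequence $F[1]\hookrightarrow E\twoheadrightarrow T$ with $F\in\sPF$, $T\in\sPT$; the element $\srT_{\leq 1}=[\sPT_{\leq 1}\to\sPM_{\leq 1}]$ and the Remark in \S 4.2 shows $\srH_{\leq 1}$ factors through $\srT_{\leq 1}$. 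So the convolution $\srPH_{\leq 1}\star\mathds{1}_{\sPF[1]}$ parametrizes extensions $0\to \sO_{\sX}\twoheadrightarrow T'\to E\to F[1]\to 0$... more precisely pairs consisting of a perverse quotient $\sO_{\sX}\twoheadrightarrow T'$ in $\sPA_{\leq 1}$ and an $F[1]$ together with an extension of $T'$ by $F[1]$, i.e. an object $E\in\sPA_{\leq 1}$ with a chosen sub-object $F[1]$ and a chosen section of the quotient $E/F[1]$. On the other side, $\mathds{1}_{\sPF[1]}^{\sO}\star\srH_{\leq 1}$ parametrizes triples: a framed object $\sO_{\sX}\to G$ with $G\in\sPF[1]$, a Hilbert-scheme quotient $\sO_{\sX}\twoheadrightarrow F$ in $\sA_{\leq 1}$ (equivalently in $\sPT_{\leq 1}$), and an extension of $F$ by $G$.

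**First I would** set up the precise moduli-theoretic descriptions of both sides as stacks over $\sPM_{\leq 1}$, being careful about the framing data: on the left, the framing $\sO_{\sX}\to E$ factors as $\sO_{\sX}\twoheadrightarrow T'\hookrightarrow E$ composed with... no — rather, the section of $E$ comes from the section of the quotient $T'=E/F[1]$, lifted. On the right, the framing $\sO_{\sX}\to E$ is the one on $G\in\sPF[1]$ together with the compatible $\sO_{\sX}\twoheadrightarrow F$, assembled via the extension. **Then I would** construct the natural transformation in both directions. The key input is that for $G\in\sPF[1]$ (a shifted perverse torsion-free sheaf) one has $\Hom(\sO_{\sX}, G)$ behaving correctly — since $\sO_{\sX}\in\sPA$ and $G=F'[1]$ with $F'\in\sPF$, there are no maps $\sO_{\sX}\to G$ unless... actually one must check $\Hom(\sO_{\sX},F'[1])=\Ext^1(\sO_{\sX},F')$ and the relevant higher vanishing, which controls how the framing splits across the extension $0\to G\to E\to F\to 0$. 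This is where one uses that $F\in\sA_{\leq 1}\cap\sPA$ so that any map $\sO_{\sX}\to E$ lands, after composing with $E\twoheadrightarrow F$, in a quotient that is genuinely a sheaf quotient, while the indeterminacy lives in $\Hom(\sO_{\sX},G)$, giving exactly the framing on the $\sPF[1]$ factor.

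**The hard part will be** the bookkeeping of the framing data and verifying that the two iterated Cartesian constructions really are isomorphic as stacks — in particular that the map $\sO_{\sX}\to E$ on the left canonically decomposes into the pair (framing on $G$, surjection onto $F$) on the right, with no loss or gain of automorphisms. Concretely, one must show: given $\sO_{\sX}\twoheadrightarrow T'$ in $\sPA_{\leq 1}$ and $0\to F[1]\to E\to T'\to 0$, the composite $\sO_{\sX}\to T'$ lifts to $\sO_{\sX}\to E$ uniquely up to the ambiguity $\Hom(\sO_{\sX},F[1])$, because $\Ext^1(T',F[1])$... one needs $\mathrm{Ext}^1_{\sPA}(\sO_{\sX}, F[1])$ versus the obstruction space — this should follow from $\sO_{\sX}\in\sPA$ together with $F[1]\in\sPF[1]$ and the torsion-pair structure $(\sPT,\sPF[1])$, namely $\Hom(\sPT,\sPF[1])=0$ and hence the relevant $\Ext$-computation degenerates. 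Once the pointwise statement is in hand, one spreads it out in families using that $\sM_{\leq 1}^{\sO}$ and the perverse analogue are well-behaved relative stacks, and concludes by the relation (3) in Definition \ref{relative:Grothendieck:group} (Zariski fibration with the same fibers), exactly as in \cite[\S 4]{Calabrese}. I would expect the rest — associativity of convolution, compatibility with the forgetful maps to $\sPM_{\leq 1}$ — to be routine given Bridgeland's framework.
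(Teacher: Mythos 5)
Your proposal is correct in outline and takes essentially the same route as the paper: describe both convolutions as moduli of diagrams, use the torsion-pair decomposition $F[1]\hookrightarrow E\twoheadrightarrow T$ with $F\in\sPF$, $T\in\sPT_{\leq 1}$, show that lifts of $\sO_{\sX}\to T$ to $\sO_{\sX}\to E$ with perverse cokernel in $\sPF[1]$ form a torsor under $\Hom_{\sX}(\sO_{\sX},F[1])$, and conclude via geometric bijections and the Zariski-fibration relation, as in Calabrese. Only two slips to correct: with the paper's convention the perverse Hilbert quotient is the \emph{sub}object and the $\sPF[1]$-object is the quotient on the left-hand side (your ``chosen sub-object $F[1]$'' description reverses this), and the two sides are not isomorphic as stacks over $\sPM_{\leq 1}$ --- the identity holds only between classes in $H_{\infty}(\sPA_{\leq 1})$, exactly through relation (3), as you in fact invoke at the end.
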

\begin{proof}
First let us analyze both sizes of the equality. The left hand side  (\mbox{LHS}) is represented by a stack 
$\mathfrak{M}_{L}$, parameterizing diagrams:
\[
\xymatrix{
\sO_{\sX}\ar@{->>}[d]&&\\
P_1\ar@{^{(}->}[r]& E\ar@{->>}[r]& P_2
}
\]
where all objects are in $\sPA_{\leq 1}$, the bottom sequence is exact in $\sPA_{\leq 1}$, 
$\sO_{\sX}\twoheadrightarrow P_1$ is surjective in $\sPA_{\leq 1}$, and 
$P_2\in \sPF[1]$. 

The right hand side (\mbox{RHS}) is represented by a stack $\mathfrak{M}_{R}$, parameterizing diagrams:
\[
\xymatrix{
\sO_{\sX}\ar[d]&&\sO_{\sX}\ar[d]^{\sur}\\
F[1]\ar@{^{(}->}[r]& E\ar@{->>}[r]& T
}
\]
where the horizontal sequence 
$$F[1]\hookrightarrow E\twoheadrightarrow T$$
is an exact sequence in $\sPA_{\leq 1}$, and $F\in \sPF$, $T\in \sPT_{\leq 1}$. 
Moreover 
$\sO_{\sX}\to T$ is surjective in $\sA_{\leq 1}$, and has perverse cokernel lying in $\sPF[1]$. 
Actually given a perverse coherent sheaf $E\in \sPA_{\leq 1}$, there exists a unique exact sequence above.

As in \S 3.3 of \cite{Calabrese}, we construct the following diagram:
\begin{equation}\label{main:diagram}
\xymatrix{
\mathfrak{M}_{L}\ar[dr]^{f_{L}}&& \mathfrak{M}\ar[dl]^{f^\prime}\ar[dr]^{g}& &\mathfrak{M}_{R}\ar[dl]^{g_{R}}\\
&\mathfrak{M}^\prime&& \mathfrak{N}
}
\end{equation}
such that the maps are either geometric bijections or Zaraski fibrations. 

We first define the stack $\mathfrak{M}^\prime$, which parametrizes the diagrams of the form:
\[
\xymatrix{
\sO_{\sX}\ar[d]_{\varphi}\\
E
}
\]
such that $\pCoker(\varphi)\in \sPF[1]$.  By Lemma 3.2 of \cite{Calabrese}, this is equivalent to 
$\Cone(\varphi)\in D^{\leq 1}(\sX)$, which is open. So 
$\mathfrak{M}^\prime$ is an open substack of the stack of framed perverse sheaves $\sPM_{\leq 1}^{\sO}$.

The first lemma is:
\begin{lem}\label{map:f}
There is a map 
$f_L: \mathfrak{M}_{L}\to \mathfrak{M}^\prime$ induced by the composition 
$$\sO_{\sX}\twoheadrightarrow P_1\hookrightarrow E,$$ 
which is a geometric bijection. 
\end{lem}
\begin{proof}
The map $f_L:  \mathfrak{M}_{L}\to \mathfrak{M}^\prime$ is an equivalence on $\cc$-points. As we see later, 
$\srPH_{\leq 1}$, $\mathds{1}_{\sPF}$ are all Laurent elements in the Hall algebra $H_\infty(\sPA_{\leq 1})$. So for any 
$\alpha\in K(\sX)$, 
$\mathfrak{M}_{L,\alpha}\to \mathfrak{M}_{\alpha}^\prime$ is of finite type.
\end{proof}

Secondly, we define the stack $\mathfrak{M}$, which parametrizes the diagrams of the form:
\[
\xymatrix{
&\sO_{\sX}\ar[d]^{\phi}&\\
F[1]\ar@{^{(}->}[r]& E\ar@{->>}[r]& T,
}
\]
where the horizontal sequence is a short exact sequence of perverse sheaves and 
$F\in\sPF$, $T\in \sPT_{\leq 1}$, and $\pCoker(\phi)\in\sPF[1]$.
The stack $\mathfrak{M}$ can be understood as a fibre product:
\[
\xymatrix{
\mathfrak{M}\ar[r]\ar[d]&\mathfrak{M}^\prime\ar[d]\\
\sZ\ar[r]&\sPM_{\leq 1},
}
\]
where $\sZ$ is the element $\mathds{1}_{\sPF[1]}\star \mathds{1}_{\sPT_{\leq 1}}$. 

\begin{lem}\label{map:fprime}
The morphism $f^\prime: \mathfrak{M}\to \mathfrak{M}^\prime$ defined by forgetting the bottom exact sequence 
is a geometric bijection. 
\end{lem}
\begin{proof}
As in Proposition 3.4 of \cite{Calabrese}, considering the following diagram:
\[
\xymatrix{
\sZ\ar[r]\ar[d]&\sPM_{\leq 1}^{(2)}\ar[r]^{b}\ar[d]&\sPM_{\leq 1}\\
\sPF[1]\times\sPT_{\leq 1}\ar[r]& \sPM_{\leq 1}\times\sPM_{\leq 1}
}
\]
where the bottom is an open immersion and $b$ is of finite type. The morphism $\sZ\to \sPM_{\leq 1}$ induces an equivalence on $\cc$-points since $(\sPF[1], \sPT)$ is a torsion pair in $\sPA$. As 
$\mathfrak{M}\to \mathfrak{M}^\prime$ is a base change, it is a geometric bijection. 
\end{proof}

So to prove the main identity, we need to prove that 
$$[\mathfrak{M}]=[\mathfrak{M}_{R}]\in H_\infty(\sPA).$$
In Diagram (\ref{main:diagram}), we are only left to define the stack $\mathfrak{N}$. 
The stack $\mathfrak{N}$ is defined as the moduli stack of the following diagrams:
\[
\xymatrix{
&&\sO_{\sX}\ar[d]^{\sur}\\
F[1]\ar@{^{(}->}[r]& E\ar@{->>}[r]& T,
}
\]
where the bottom exact sequence lies in $\sPA$ and $F\in \sPF$, $T\in \sPT_{\leq 1}$. Moreover the morphism
$\sO_{\sX}\to T$ is surjective in $\sA$ and has perverse cokernel in $\sPF[1]$. There exist two maps
$$l: \mathfrak{M}\to \mathfrak{N}$$
which is given by
$$
\xymatrix{
&\sO_{\sX}\ar[d]^{\phi}&\\
F[1]\ar@{^{(}->}[r]& E\ar@{->>}[r]& T,
}\longmapsto
\xymatrix{
&&\sO_{\sX}\ar[d]^{\sur}\\
F[1]\ar@{^{(}->}[r]& E\ar@{->>}[r]& T,
}
$$
and 
$$r: \mathfrak{M}_{R}\to \mathfrak{N}$$
which is given by:
$$
\xymatrix{
\sO_{\sX}\ar[d]&&\sO_{\sX}\ar[d]^{\sur}&\\
F[1]\ar@{^{(}->}[r]& E\ar@{->>}[r]& T,
}\longmapsto
\xymatrix{
&&\sO_{\sX}\ar[d]^{\sur}\\
F[1]\ar@{^{(}->}[r]& E\ar@{->>}[r]& T.
}
$$

\begin{prop}\label{l:r}
The maps $l$ and $r$ are two Zaraski fibrations with the same fibers. 
\end{prop}
\begin{proof}
First over a perverse sheaf $E$, we have 
$$F[1]\hookrightarrow E\twoheadrightarrow T$$
in $\sPA_{\leq 1}$, and $F\in \sPF, T\in\sPT_{\leq 1}$. 
So over an element 
$$
\xymatrix{
&&\sO_{\sX}\ar[d]^{\sur}\\
F[1]\ar@{^{(}->}[r]& E\ar@{->>}[r]& T.
}
$$
in $\mathfrak{N}$, the fiber of $r$ is $\Hom_{\sX}(\sO_{\sX}, F[1])$ and the fiber of $l$ is: 
the lifts 
$$\sO_{\sX}\stackrel{}{\longrightarrow} E$$
which has perverse cokernel $\pCoker(\varphi)\in \sPF[1]$. 

From the exact sequence 
$$0\rightarrow \Hom_{\sX}(\sO_{\sX}, F[1])\longrightarrow  \Hom_{\sX}(\sO_{\sX}, E)\longrightarrow  
\Hom_{\sX}(\sO_{\sX}, T)\rightarrow 0,$$
for a map $\varphi: \sO_{\sX}\to T$, all lifts of $\varphi$ by 
$\sO_{\sX}\to E$ are in bijection with $\Hom_{\sX}(\sO_{\sX}, F[1])$. Then to finish the proof, we have to show that 
any lift of $\sO_{\sX}\to T$ is one $\sO_{\sX}\to F[1]$ such that the perverse cokernel is in 
$\sPF[1]$. 

Let $\varphi: \sO_{\sX}\to T$ be a map with $T\in \sPT_{\leq 1}$ and $\pCoker(\varphi)\in \sPF[1]$. 
Let $\delta: \sO_{\sX}\to E$ be a lift such that 
\[
\xymatrix{
0\ar[r]\ar[d]&\sO_{\sX}\ar[r]^{=}\ar[d]_{\delta}&\sO_{\sX}\ar[d]^{\varphi}\\
F[1]\ar@{^{(}->}[r]& E\ar@{->>}[r]& T.
}
\]
is an exact-sequence diagram. Hence we have an exact sequence on cokernels:
$$F[1]\rightarrow \pCoker(\delta)\rightarrow \pCoker(\varphi)\rightarrow 0.$$
So from Lemma 1.5 of \cite{Calabrese}, $\pCoker(\delta)\in\sPF[1]$. 
This construction works for families and we are done. 
\end{proof}
Hence from Lemmas \ref{map:f}, \ref{map:fprime}, Proposition \ref{l:r},
$$[\mathfrak{M}_{L}]=[\mathfrak{M}_{R}],$$
hence the theorem. 
\end{proof}

\subsection{PT-type invariants identities.}

Recall that in \S \ref{stable:pair} we define a torsion pair
$(\sP, \sQ)$ on $\sA$, where 
$$\sP=\{\text{coherent sheaves supported on dimension zero\}}$$
and $\sQ$ is the right orthogonal of $\sP$.  Recall that the tilt of $\sA$ is given by
$\sA^{\#}$.

The scheme $\Hilb_{\leq 1}^{\#}(\sX)$ parameterizes quotients 
$\sO_{\sX}\twoheadrightarrow F$ in $\sA^{\#}$ supported on dimension $\leq 1$. So we have an element 
$\srH_{\leq 1}^{\#}\in H_\infty(\sA_{\leq 1})$ which gives rise to the PT-stable pair invariants. 

Let $\sQ_{\leq 1}$ be the stack parameterizing objects in $\sQ_{\leq 1}\subset \sM_{\leq 1}$. Then there exists an element 
$\mathds{1}_{\sQ_{\leq 1}}\in H_\infty(\sA_{\leq 1})$.  Its framed version is denoted by $\mathds{1}_{\sQ_{\leq 1}}^{\sO}$, 
parameterizing 
$$\{\sO_{\sX}\to F\}$$
for $F\in \sQ_{\leq 1}$.  Similar to \S 4.5 of \cite{Bridgeland11}, we have the following Hall algebra identity:
\begin{equation}\label{Hall:algebra:identity:PT}
\mathds{1}_{\sQ_{\leq 1}}^{\sO}=\srH_{\leq 1}^{\#}\star \mathds{1}_{\sQ_{\leq 1}}.
\end{equation}

\textbf{Restriction to the exceptional locus.}
Following Calabrese \cite{Calabrese}, we define the following:
\[
\begin{cases}
\sQ_{\exc}=\{Q\in \sQ_{\leq 1}| \dim\Supp R\psi_{\star}Q=0\};\\
\sPA_{\exc}=\{E\in\sPA_{\leq 1}| \dim\Supp R\psi_{\star}E=0\};\\
\sPT_{\exc}=\sPT\cap \sPA_{\exc};\\
\sPT_{\bullet}=\sPT_{\exc}\cap \sQ_{\exc},
\end{cases}
\]
where $\psi: \sX\to Y$ is the contraction map. 
Hence inside $\Hilb_{\leq 1}^{\#}(\sX)$, there is an open subscheme $\Hilb^{\#}_{\exc}(\sX)$, parameterizing quotients 
$\sO_{\sX}\to F$ in $\sA^{\#}_{\leq 1}$ such that $F\in \sPT_{\bullet}$.  Its Hall algebra element is denoted by
$\srH_{\exc}^{\#}\in H_\infty(\sA_{\leq 1})$.

\begin{prop}
We have the following identity in $H_\infty(\sA_{\leq 1})$:
$$\mathds{1}_{\sPT_{\bullet}}^{\sO}=\srH_{\exc}^{\#}\star \mathds{1}_{\sPT_{\bullet}}.$$
\end{prop}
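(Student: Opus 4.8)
The identity is the restriction to the exceptional locus of the stable-pair identity (\ref{Hall:algebra:identity:PT}), and the plan is to prove it by replaying the proof of (\ref{Hall:algebra:identity:PT})---which itself follows \S 4.5 of \cite{Bridgeland11} together with Calabrese \cite{Calabrese}---while tracking the extra condition $\dim\Supp R\psi_{\star}(-)=0$ that cuts out $\sPT_{\bullet}=\sPT_{\exc}\cap\sQ_{\exc}$ inside $\sPA_{\leq 1}$. Concretely, $\mathds{1}_{\sPT_{\bullet}}^{\sO}$ is represented by the stack of framed sheaves $\sO_{\sX}\to E$ with $E\in\sPT_{\bullet}$, while the convolution $\srH_{\exc}^{\#}\star\mathds{1}_{\sPT_{\bullet}}$ is represented by the stack of short exact sequences of coherent sheaves $0\to F\to E\to Q\to 0$ equipped with a quotient $\sO_{\sX}\twoheadrightarrow F$ in $\sA^{\#}_{\leq 1}$ such that $F\in\sPT_{\bullet}$ and $Q\in\sPT_{\bullet}$.

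First I would set up the comparison between these two stacks. In one direction the morphism is the composition $\sO_{\sX}\twoheadrightarrow F\hookrightarrow E$; in the other, given $\sO_{\sX}\xrightarrow{s}E$ with $E\in\sPT_{\bullet}$, one factors $s$ in the tilted heart $\sA^{\#}$ as $\sO_{\sX}\twoheadrightarrow F\hookrightarrow E$ with $F$ the image of $s$ in $\sA^{\#}$, and sets $Q:=\Coker_{\sA^{\#}}(s)$; the long exact cohomology sequence for the standard $t$-structure shows, as in \S 4.5 of \cite{Bridgeland11}, that $F$ and $Q$ are honest coherent sheaves lying in $\sQ$ and that $0\to F\to E\to Q\to 0$ is exact in $\sA$. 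That this factorization is a bijection on $\cc$-points, and that the intermediate objects inherit the perverse-torsion conditions so that one really lands in $\sPT_{\bullet}$, I would take over from the proof of (\ref{Hall:algebra:identity:PT}); since $\sPT$ need not be literally closed under the relevant subobjects, I expect to reproduce Calabrese's multi-step correspondence in the style of Diagram (\ref{main:diagram}), with an intermediate stack and a chain of geometric bijections and Zariski fibrations rather than a single morphism. Finiteness of these morphisms over each graded piece $\sM_{\leq 1,\alpha}$ then follows as in Lemma \ref{map:f} from the fact that $\srH_{\exc}^{\#}$ and $\mathds{1}_{\sPT_{\bullet}}$ are Laurent elements of $H_\infty(\sPA_{\leq 1})$.

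The one genuinely new ingredient beyond (\ref{Hall:algebra:identity:PT}) is that the exceptional condition survives the factorization in both directions, and this is the step I would isolate as a separate lemma. For a short exact sequence $0\to F\to E\to Q\to 0$ of coherent sheaves, applying $R\psi_{\star}$ produces a distinguished triangle $R\psi_{\star}F\to R\psi_{\star}E\to R\psi_{\star}Q\to R\psi_{\star}F[1]$; since in a triangle the support of any one term is contained in the union of the supports of the other two, the condition $\dim\Supp R\psi_{\star}(-)=0$ obeys a two-out-of-three principle. Hence $E\in\sPT_{\bullet}$ forces $F,Q\in\sPT_{\bullet}$ and conversely, so both comparison morphisms land in the asserted loci. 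I expect the main obstacle to be the perverse-torsion bookkeeping of the middle paragraph---getting the intermediate stack in Diagram (\ref{main:diagram}) exactly right---rather than this support computation; once that is in place, the relations defining the Hall algebra give $\mathds{1}_{\sPT_{\bullet}}^{\sO}=\srH_{\exc}^{\#}\star\mathds{1}_{\sPT_{\bullet}}$ in $H_\infty(\sA_{\leq 1})$.
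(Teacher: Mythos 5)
Your overall shape matches the paper's proof: both sides are compared by factoring a section $s\colon \sO_{\sX}\to E$ with $E\in\sPT_{\bullet}$ through its image in the tilted heart $\sA^{\#}$, and in the other direction by composing $\sO_{\sX}\twoheadrightarrow F\hookrightarrow E$; your two-out-of-three observation for the condition $\dim\Supp R\psi_{\star}(-)=0$ is correct and does take care of the exceptional part. The genuine gap is at the point where you say the intermediate objects ``inherit the perverse-torsion conditions'' and propose to take this over from the proof of (\ref{Hall:algebra:identity:PT}), possibly after reproducing a multi-step correspondence in the style of Diagram (\ref{main:diagram}). Identity (\ref{Hall:algebra:identity:PT}) involves only $\sQ_{\leq 1}$ and contains no $\sPT$-condition, so nothing in its proof tells you that the $\sA^{\#}$-image $F$ of $s$ lies in $\sPT$; and since $\sPT$ is a torsion class, hence not closed under subobjects, $F\subset E$ with $E\in\sPT_{\bullet}$ does not give $F\in\sPT$ either. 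This membership is exactly what is needed for $\sO_{\sX}\twoheadrightarrow F$ to define a point of $\Hilb^{\#}_{\exc}(\sX)$, i.e. for your factorization to land in the stack representing $\srH_{\exc}^{\#}\star\mathds{1}_{\sPT_{\bullet}}$, and it is the main content of the paper's proof, not the support bookkeeping. Inserting intermediate stacks, geometric bijections and Zariski fibrations as in Diagram (\ref{main:diagram}) does not supply it: that machinery handles the torsor of lifts appearing in Theorem \ref{main:Hall:algebra:identity}, whereas here the correspondence is a single bijection on $\cc$-points and the issue is purely whether the image lies in $\sPT$.

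The missing argument is short but specific. Since $s$ factors as an epimorphism $\sO_{\sX}\twoheadrightarrow F$ in $\sA^{\#}$, the cokernel $P$ of the underlying sheaf map $\sO_{\sX}\to F$ is zero-dimensional (Lemma 2.3 of \cite{Bridgeland11}). Let $F\twoheadrightarrow G$ be the projection onto the torsion-free part of $F$ with respect to the torsion pair $(\sPT,\sPF)$. Objects of $\sPF$ are killed by $\psi_{\star}$, so $\Hom(\sO_{\sX},G)=\Gamma(G)=0$, and also $\Hom(P,G)=0$ because a nonzero image of the zero-dimensional sheaf $P$ would give a zero-dimensional subsheaf of $G$ surviving under $\psi_{\star}$. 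Hence the surjection $F\to G$ vanishes both on the image of $\sO_{\sX}$ and on $P$, forcing $G=0$ and $F\in\sPT$. With this lemma, together with the easy checks that $F,Q\in\sQ$, that $Q\in\sPT$ as a quotient of $E$, and your support lemma (and, in the composition direction, that $E\in\sPT_{\bullet}$ because $\sPT$, $\sQ$ and the exceptional condition are closed under extensions), your correspondence is indeed a bijection on $\cc$-points and the identity follows.
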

\begin{proof}
First if we have a morphism
$\sO_{\sX}\to T$ in $\sA^{\#}$ with $T\in \sPT_{\bullet}$, then we have a sequence
$\sO_{\sX}\to I\to T$ in $\sA^{\#}$, where $I$ is the image in $T$. 
From Lemma 2.3 of \cite{Bridgeland11}, $I$ is a sheaf, so $\sO_\sX\twoheadrightarrow I$ has cokernel
$P\in \sP$. 
Considering 
$$I\to T\to Q,$$
where $Q$ is the quotient.  The short exact sequence 
$I\hookrightarrow T\twoheadrightarrow Q$ lies in $\sA$, $Q\in\sPT$ since it is a quotient of $T$, and 
$Q\in \sQ$ since it is an object in $\sA^{\#}$. Also $R\psi_{\star}Q$ supports on dimension zero since 
$R\psi_{\star}T$ is.  So $Q\in \sPT$. 

Conversely, let $\sO_{\sX}\to I$ be an element in $\Hilb_{\exc}^{\#}$, where it is an epimorphism in $\sA^{\#}$. Let 
$$I\hookrightarrow T\twoheadrightarrow Q$$
be an exact sequence of coherent sheaves, with $I\in \sQ_{\exc}$, $Q\in\sPT_{\bullet}$. So 
$T\in\sPT_{\bullet}$. Also $T\in\sQ_{\exc}$, so we need to prove $I\in \sPT$.

Considering the exact sequence 
$$\sO_{\sX}\to I\twoheadrightarrow P,$$
with $P$ supported in dimension zero. Let 
$$I\twoheadrightarrow F$$
be the projection to the torsion free part of $I$ with respect to $(\sPT, \sPF)$.  Then the morphism
$$\sO_{\sX}\to I\twoheadrightarrow F$$
is zero, since $F\in \sPT$ has no sections.  Thus there exists a morphism 
$$P\to F$$
such that 
$$I\twoheadrightarrow P\to F=I\twoheadrightarrow F.$$
But $P$ is a skyscraper sheaf, which implies that $P\to F=0$. 
So $I\twoheadrightarrow F=0$, which implies that $F=0$ and $I\in\sPT$. 
The RHS and LHS are given by the following correspondence:
$$
\xymatrix{
\sO_{\sX}\ar[d]&&&\\
I\ar@{^{(}->}[r]& T\ar@{->>}[r]& Q,
}\longmapsto
\xymatrix{
\sO_{\sX} \ar[r]& T.
}
$$
which is a bijection on $\cc$-points. 
\end{proof}

\subsection{Duality functor.}

We briefly recall the duality functor 
\begin{equation}\label{duality:functor}
\dd: D^b(\sX)\to D^b(\sX)
\end{equation}
defined by:
$$E\mapsto R\Hom_{\sX}(E, \sO_{\sX})[2].$$
This duality functor satisfies the following property:
\begin{equation}\label{duality:functor:preserve:equality}
\dd(\sQT_{\bullet})=\sPF,
\end{equation}
where $q=-(p+1)$. 
\begin{rmk}
Since $\sX$ is a smooth Calabi-Yau threefold stack, the proof of (\ref{duality:functor:preserve:equality}) is very similar to Lemma 3.7 of \cite{Calabrese}. We omit the details. 
\end{rmk}

Let $\dd^\prime:=\dd[1]$ be the functor of $\dd$ shifted by one. 

\begin{prop}\label{duality:equality}
We have:
$$\dd^\prime(\mathds{1}_{\sQT_{\bullet}})=\mathds{1}_{\sPF[1]};$$
$$\dd^\prime(\mathds{1}^{\sO}_{\sQT_{\bullet}})=\mathds{1}^{\sO}_{\sPF[1]},$$
where $\mathds{1}_{\sQT_{\bullet}}, \mathds{1}_{\sPF[1]}$ are elements in $H_{\infty}(\sA_{\leq 1})$ given by the stacks 
$\smQT_{\bullet}, \smPF\in \sM_{\leq 1}$; and 
$\mathds{1}^{\sO}_{\sQT_{\bullet}}, \mathds{1}^{\sO}_{\sPF[1]}$ are elements in $H_{\infty}(\sPA_{\leq 1})$ given by the stacks 
$\smQT^{\sO}_{\bullet}, \smPF[1]^{\sO}\in \sM^{\sO}_{\leq 1}$.
\end{prop}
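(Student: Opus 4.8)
The plan is to deduce Proposition~\ref{duality:equality} from the two facts already assembled: the object-level identity $\dd(\sQT_\bullet)=\sPF$ (equation \eqref{duality:functor:preserve:equality}), and the general principle that an (anti-)equivalence of derived categories carrying one heart to (a shift of) another induces an isomorphism of the associated motivic Hall algebras. Concretely, $\dd$ is an exact anti-equivalence of $D^b(\sX)$ because $\sX$ is smooth Calabi--Yau of dimension three, so $R\Hom_\sX(-,\sO_\sX)$ is a perfect dualizing functor; the shift $[2]$ is cosmetic. Setting $\dd'=\dd[1]$, equation \eqref{duality:functor:preserve:equality} combined with the exact sequence \eqref{exact:sequence:sPA} shows $\dd'$ sends the (full subcategory of $\sM_{\leq 1}$ parametrizing) objects of $\sQT_\bullet$ bijectively — on $\cc$-points, and in families — to the objects of $\sPF[1]$, and likewise at the level of framed objects, since $\dd'$ sends $\sO_\sX\in\sA^{\#}$-sections to $\sO_\sX$-sections up to the canonical identification $R\Hom_\sX(\sO_\sX,\sO_\sX)=\sO_\sX$.

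The key steps, in order, would be: (i) record that $\dd$ is an anti-autoequivalence of $D^b(\sX)$ with $\dd^2\cong\id$, hence induces an isomorphism of stacks of objects $\sM_{\leq1}\to\sM_{\leq1}$ and $\sPM_{\leq1}\to\sPM_{\leq1}$ that descends to an (anti-)isomorphism of the infinite-type Hall algebras $H_\infty(\sA_{\leq1})$ and $H_\infty(\sPA_{\leq1})$ — the anti-homomorphism property is standard because dualizing a short exact sequence reverses its arrows; (ii) check $\dd'$ restricts to an isomorphism of the open substacks $\smQT_\bullet\subset\sM_{\leq1}$ and $\smPF[1]\subset\sM_{\leq1}$ (resp.\ $\sPM_{\leq1}$), which is exactly the family version of \eqref{duality:functor:preserve:equality}, invoking the cited analogue of Lemma~3.7 of \cite{Calabrese}; (iii) conclude $\dd'(\mathds{1}_{\sQT_\bullet})=\mathds{1}_{\sPF[1]}$ by definition of these elements as the classes of the inclusions of those substacks; (iv) for the framed statement, note that a framing $\sO_\sX\to E$ maps under $\dd'$ to $\dd'(E)\to\dd'(\sO_\sX)[{-}1]=\dd(\sO_\sX)=\sO_\sX$, but one wants framings of the form $\sO_\sX\to\dd'(E)$, so one must additionally apply the Calabi--Yau Serre-duality identification $\Hom(\sO_\sX,\dd'(E))\cong\Hom(E,\sO_\sX)^\vee$ (up to shift) to see that the framed moduli stacks $\smQT^{\sO}_\bullet$ and $\smPF[1]^{\sO}$ are exchanged.

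I expect step (iv) to be the main obstacle: the naive application of $\dd'$ to a section produces a \emph{co}section $\dd'(E)\to\sO_\sX$ rather than a section $\sO_\sX\to\dd'(E)$, so identifying the framed moduli stacks requires care. The resolution is that on a smooth compact Calabi--Yau threefold stack $\Hom_\sX(\sO_\sX,\dd'(E))=\Hom_\sX(\sO_\sX,R\Hom(E,\sO_\sX)[3])=\Ext^3_\sX(E,\sO_\sX)\cong\Hom_\sX(\sO_\sX,E)^\vee$ by Serre duality for $\sX$, and this identification is functorial in $E$, hence globalizes over the moduli stack; in the non-compact local models one works with the relative-over-$Y$ dualizing complex $\omega_{\sX/Y}\cong\sO_\sX$ (guaranteed by $\omega_Y\cong\sO_Y$ and $R\psi_\star\sO_\sX=\sO_Y$) and the same argument goes through. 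With that identification in hand, the Cartesian square \eqref{diagram:framed} defining framed elements is preserved by $\dd'$, giving $\dd'(\mathds{1}^{\sO}_{\sQT_\bullet})=\mathds{1}^{\sO}_{\sPF[1]}$. As with the analogous step in \cite{Calabrese}, I would relegate the routine diagram-chasing that $\dd'$ is exact on the relevant hearts and compatible with base change to a remark citing \cite{Calabrese}.
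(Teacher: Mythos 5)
Your route is the same one the paper takes implicitly (the paper simply defers to Proposition 3.8 of \cite{Calabrese}): the unframed identity is the family version of \eqref{duality:functor:preserve:equality}, and the only genuine content is the framed statement. Steps (i)--(iii) are fine, modulo the small remark that $\dd'$ does not preserve $\sM_{\leq 1}$ itself (it turns sheaves of dimension $\leq 1$ into two-term complexes), so what it identifies are the substacks $\smQT_{\bullet}$ and $\smPF[1]$ inside the moduli of objects of the relevant hearts; this is harmless for the classes in question.

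The weak point is the conclusion of step (iv). Serre duality (valid here since $\psi$ is proper, $Y$ is projective and $\omega_{\sX}\cong\sO_{\sX}$) gives $\Hom_{\sX}(\sO_{\sX},\dd'(E))\cong\Ext^3_{\sX}(E,\sO_{\sX})\cong\Hom_{\sX}(\sO_{\sX},E)^\vee$: the space of framings of $\dd'(E)$ is the \emph{dual} of the space of framings of $E$, so there is no canonical identification of the two, and in particular $\dd'$ does not induce a morphism of framed stacks; the assertion that ``the Cartesian square \eqref{diagram:framed} is preserved by $\dd'$'' is not literally correct, and as written the framed identity does not follow. What Serre duality does give is that $\smQT^{\sO}_{\bullet}$ and $\smPF[1]^{\sO}$, viewed over the common base $\smQT_{\bullet}\cong\smPF[1]$, are fibrations whose fibers over corresponding points are affine spaces of the same dimension. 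One then stratifies the base by $\dim\Hom_{\sX}(\sO_{\sX},E)$ (on each stratum the framed stack is a vector bundle, by semicontinuity and base change) and invokes relations (1) and (3) of Definition \ref{relative:Grothendieck:group} to conclude the equality of \emph{classes} $\dd'(\mathds{1}^{\sO}_{\sQT_{\bullet}})=\mathds{1}^{\sO}_{\sPF[1]}$ in the Hall algebra. This is exactly how Calabrese finishes, so your argument is correct once the final inference is recast as an equality of motivic classes of fibrations with equidimensional fibers rather than an isomorphism of framed moduli stacks.
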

\begin{proof}
Proof is very similar to Proposition 3.8 of \cite{Calabrese}. 
\end{proof}

\begin{prop}\label{Hall:algebra:identity:want:cancel}
The formula in Theorem \ref{main:Hall:algebra:identity} is given by:
$$\srPH_{\leq 1}\star \mathds{1}_{\sPF[1]}=\mathds{1}_{\sPF[1]}\star \dd^\prime(\srH_{\exc}^{\#})\star 
\srH_{\leq 1}.$$
\end{prop}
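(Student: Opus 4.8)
The plan is to read the desired formula off Theorem~\ref{main:Hall:algebra:identity} by rewriting the framed element $\mathds{1}^{\sO}_{\sPF[1]}$ that occurs on its right-hand side. Since Theorem~\ref{main:Hall:algebra:identity} gives
$$\srPH_{\leq 1}\star \mathds{1}_{\sPF[1]}=\mathds{1}_{\sPF[1]}^{\sO}\star \srH_{\leq 1},$$
it suffices to prove the single auxiliary identity
$$\mathds{1}_{\sPF[1]}^{\sO}=\mathds{1}_{\sPF[1]}\star \dd^\prime(\srH_{\exc}^{\#})$$
in $H_\infty(\sPA_{\leq 1})$, and then multiply both sides on the right by $\srH_{\leq 1}$ and substitute; no cancellation in the Hall algebra is needed.

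To establish the auxiliary identity I would start from the restricted PT-type identity of the preceding proposition, taken for the perversity $q=-(p+1)$, namely
$$\mathds{1}_{\sQT_{\bullet}}^{\sO}=\srH_{\exc}^{\#}\star \mathds{1}_{\sQT_{\bullet}},$$
and apply the shifted duality functor $\dd^\prime=\dd[1]$ of (\ref{duality:functor}). The functor $\dd^\prime$ is contravariant and exact on the relevant abelian hearts, so it carries a short exact sequence $0\to A\to B\to C\to 0$ to one with the roles of $A$ and $C$ interchanged; hence it induces an anti-homomorphism of motivic Hall algebras, $\dd^\prime(x\star y)=\dd^\prime(y)\star\dd^\prime(x)$. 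Applying it to the displayed identity gives
$$\dd^\prime(\mathds{1}_{\sQT_{\bullet}}^{\sO})=\dd^\prime(\mathds{1}_{\sQT_{\bullet}})\star\dd^\prime(\srH_{\exc}^{\#}),$$
and Proposition~\ref{duality:equality} identifies the two left-hand factors as $\dd^\prime(\mathds{1}_{\sQT_{\bullet}}^{\sO})=\mathds{1}^{\sO}_{\sPF[1]}$ and $\dd^\prime(\mathds{1}_{\sQT_{\bullet}})=\mathds{1}_{\sPF[1]}$. Substituting yields exactly the auxiliary identity, and combining with Theorem~\ref{main:Hall:algebra:identity} proves the proposition.

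The main obstacle is to make precise that $\dd^\prime$ descends to an anti-homomorphism of the infinite-type Hall algebras $H_\infty(\sA_{\leq 1})$ and $H_\infty(\sPA_{\leq 1})$ and that it is compatible with the framed moduli stacks in $\sM^{\sO}_{\leq 1}$: one must check that $\dd^\prime$ preserves local finiteness and the three defining relations of $L(\St_{\infty}/-)$, that it sends the tautological framing $\sO_{\sX}\to E$ to the correct framing on the dual side (this is where the shift by $[1]$ and the Calabi--Yau hypothesis $\omega_{\sX}\cong\sO_{\sX}$ enter, as in Lemma~3.7 of \cite{Calabrese}), and --- the most delicate point --- that the perversities are matched, i.e. that $\dd^\prime$ interchanges the $q$-perverse hearts on $\sX$ with the $p$-perverse ones, which is what makes $\dd^\prime(\srH^{\#}_{\exc})$ the element appearing in the statement. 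Since $\sX$ is a smooth Calabi--Yau threefold DM stack, each of these reduces to the corresponding verification in \cite{Calabrese} (Lemma~3.7 and Proposition~3.8), adapted to the orbifold setting, so no essentially new input is required.
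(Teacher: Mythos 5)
Your proposal is correct and follows essentially the same route as the paper: the paper also reduces to the identity $\mathds{1}^{\sO}_{\sPF[1]}=\mathds{1}_{\sPF[1]}\star \dd^\prime(\srH_{\exc}^{\#})$, obtained by applying $\dd^\prime$ to the $q$-perverse version of the restricted PT-type identity $\mathds{1}_{\sQT_{\bullet}}^{\sO}=\srH_{\exc}^{\#}\star \mathds{1}_{\sQT_{\bullet}}$ and invoking Proposition~\ref{duality:equality}, with the order reversal coming from contravariance of $\dd^\prime$ exactly as you say. Your write-up is in fact more explicit than the paper's (which compresses this into one chain of equalities and defers the Hall-algebra anti-homomorphism and perversity-matching checks to Calabrese's Lemma 3.7 and Proposition 3.8), but no new idea is involved.
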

\begin{proof}
The formula in Theorem \ref{main:Hall:algebra:identity} is:
$$\srPH_{\leq 1}\star \mathds{1}_{\sPF[1]}=\mathds{1}^{\sO}_{\sPF[1]}\star  
\srH_{\leq 1}.$$
From Proposition \ref{duality:equality}, 
$$\mathds{1}^{\sO}_{\sPF[1]}=\dd^\prime(\mathds{1}^{\sO}_{\sQT_{\bullet}})
=\dd^\prime(\srH_{\leq 1}^{\#}\star \mathds{1}_{\sQT_{\bullet}})
=\mathds{1}_{\sPF[1]}\star \dd^\prime(\srH_{\exc}^{\#}).$$
\end{proof}

\subsection{Laurent elements and a complete Hall algebra.}

As in \cite{Bridgeland11} and \cite{Calabrese}, we need to introduce Laurent elements in the numerical Grothendieck group $K(\sX)$.  The reason to do this is that the infinite-type Hall algebra $H_\infty(\sA_{\leq 1})$ is too big to support an integration map and we have to work on spaces of locally finite type. 

Recall that for the contraction $\psi: \sX\to Y$, we have 
$$N_1(\sX/Y)\hookrightarrow N_1(\sX)\twoheadrightarrow N_1(Y).$$
So we have 
$$N_1(\sX)=N_1(\sX/Y)\oplus N_1(Y).$$
We can index elements in $N_{\leq 1}(\sX)=N_1(\sX)\oplus N_0(\sX)=N_1(Y)\oplus N_1(\sX/Y)\oplus N_0(\sX)$ by $(\gamma, \delta, n)$. 
Recall that we have a Chern character map:
$$[E]\in F_1K(\sX)\mapsto (\Ch_2(E), \Ch_3(E))\in N_1(\sX)\oplus N_0(\sX).$$
Let $\pDelta\subset F_1K(\sPA)\cong N_1(\sX)\oplus N_0(\sX)$ be the image of the Chern character map  of $\sPA_{\leq 1}$.  Then the Hall algebra 
$H(\sPA_{\leq 1})$ is graded by $\pDelta$.  Let 
$\mathscr{C}\subset N_1(\sX/Y)$ be the effective curve classes in $\sX$ contracted by $\psi$. 

\begin{defn}
Let $L\subset \pDelta$ be a subset. We call $L$ to be {\em Laurent} is the following conditions hold:
\begin{enumerate}
\item  for any $\gamma$, there exists an $n(\gamma, L)$ such that for all 
$\delta, n$, with $(\gamma, \delta, n)\in L$, we have $n\geq n(\gamma, L)$;
\item  for all $\gamma, n$, there exists a $\delta(\gamma, n, L)\in \mathscr{C}$, such that 
for all $\delta$ with $(\gamma, \delta, n)\in L$ one has $\delta\leq \delta(\gamma, n, L)$.
\end{enumerate}
\end{defn}

Let $\Lambda$ be the set of all Laurent subsets of $\pDelta$.  The set $\Lambda$ satisfies the following properties as in Lemma 3.10 of  \cite{Calabrese}:
\begin{enumerate}
\item If $L_1, L_2\in \Lambda$, then $L_1+L_2\in\Lambda$;
\item If $\alpha\in\pDelta$ and  $L_1, L_2\in \Lambda$, then there exist only finitely many decompositions
$\alpha=\alpha_1+\alpha_2$ with $\alpha_i\in L_i$.
\end{enumerate}

\textbf{The $\Lambda$-completion $H(\sPA_{\leq 1})_{\Lambda}$.}

Recall the algebra:  
$$\cc_{\sigma}[\pDelta]=\bigoplus_{\alpha\in\pDelta}x^{\alpha}.$$
The integration map is given by:
$$I: H_{\ssc}(\sPA_{\leq 1})\to \cc_{\sigma}[\pDelta].$$
For any $\pDelta$-graded associative algebra $R$, the $\Lambda$-completion 
$R_{\Lambda}$ is defined to be the vector space of formal series:
$$\sum_{(\gamma,\delta,n)}x_{(\gamma,\delta,n)}$$
with $x_{(\gamma,\delta,n)}\in R_{x_{(\gamma,\delta,n)}}$, and $x_{(\gamma,\delta,n)}=0$ outside a Laurent subset. The product is defined by:
$$x\cdot y=\sum_{\alpha\in\pDelta}\sum_{\alpha_1+\alpha_2=\alpha}x_{\alpha_1}\cdot y_{\alpha_2}.$$
Then the integration map $I: H_{\ssc}(\sPA_{\leq 1})\to \cc_{\sigma}[\pDelta]$ induces a morphism on the completions:
$$I_{\Lambda}: H_{\ssc}(\sPA_{\leq 1})_{\Lambda}\to \cc_{\sigma}[\pDelta]_{\Lambda}.$$

\textbf{Elements in $H(\sPA_{\leq 1})_{\Lambda}$.}

Let $\mathfrak{S}$ be an algebraic stack of locally of finite type over $\cc$, such that 
$[\mathfrak{S}\to \sPM_{\leq 1}]$ is a map to $\sPM_{\leq 1}$.  For $\alpha\in\pDelta$, the preimage of $\sPM_{\alpha}$ is denoted by $\mathfrak{S}_{\alpha}$. The element 
$$[\mathfrak{S}\to \sPM_{\leq 1}]\in H_{\infty}(\sPA_{\leq 1})$$
is \textbf{Laurent} if $\mathfrak{S}_{\alpha}$ is a stack of finite type for all $\alpha\in\pDelta$, and $\mathfrak{S}_{\alpha}$ is empty for $\alpha$ outside a Laurent subset. 

Then following results are due to Calabrese in \cite{Calabrese}.

\begin{prop}
The elements 
$$\mathds{1}_{\sPF[1]},\quad \mathds{1}^{\sO}_{\sPF[1]}, \quad \srPH_{\leq 1}, \quad \srH_{\leq 1}$$
are all Laurent.
\end{prop}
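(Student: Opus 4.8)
The plan is to verify the Laurent property for each of the four elements separately, reducing everything to the structure of the contraction $\psi:\sX\to Y$ and the behaviour of the Chern character on the perverse heart $\sPA_{\leq 1}$. Recall from the definition that a class $[\mathfrak{S}\to\sPM_{\leq 1}]$ is Laurent precisely when $\mathfrak{S}_\alpha$ is of finite type for each $\alpha\in\pDelta$ and is empty outside a Laurent subset of $\pDelta$; since $Y$ has only isolated singularities the decomposition $N_{\leq 1}(\sX)=N_1(Y)\oplus N_1(\sX/Y)\oplus N_0(\sX)$ lets us index classes by triples $(\gamma,\delta,n)$, and it is the $(\delta,n)$-coordinates (the part of the class living over the exceptional fibre) that must be controlled.

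First I would treat $\srH_{\leq 1}$ (and, in parallel, $\srPH_{\leq 1}$, using that $\sO_\sX\in\sPA$ and that $\srPH_{\leq 1}$ factors through $[\sPT\to\sM_{\leq 1}]$ as in the Remark following the definition of the perverse Hilbert scheme). For a fixed class $\alpha=(\gamma,\delta,n)$ the Hilbert scheme $\Hilb_{\leq 1}^\alpha(\sX)$ is a projective scheme (it is a closed subscheme of a suitable Quot-scheme over $\sX$, which is proper), hence of finite type; this is the content cited from \cite{BCY}, \cite{OS}. For the emptiness-outside-a-Laurent-subset condition: if $\sO_\sX\twoheadrightarrow F$ with $F$ one-dimensional and of class $(\gamma,\delta,n)$, then $\gamma=\psi_\star[\mathrm{Supp}\,F]$ is an effective curve class on $Y$, and for fixed $\gamma$ the $\psi$-contracted part $\delta$ is bounded above by the part of $[F]$ supported on the exceptional fibre of a fixed representative — this is exactly condition (2) in the Laurent definition, with $\delta(\gamma,n,L)\in\mathscr{C}$. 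Condition (1), the lower bound $n\ge n(\gamma,L)$ on the holomorphic Euler characteristic, follows from purity/stability-type boundedness: a quotient $\sO_\sX\twoheadrightarrow F$ with $F$ of bounded support cannot have arbitrarily negative $\chi$, so $n$ is bounded below once $\gamma$ (and the finitely many relevant $\delta$) are fixed. This is the argument of \cite[\S4.2]{Bridgeland11} and \cite[Lemma 3.11]{Calabrese} transported verbatim to the stacky setting, the only change being that boundedness statements for Quot schemes are invoked for the DM stack $\sX$ rather than a scheme.

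Next I would handle $\mathds{1}_{\sPF[1]}$ and $\mathds{1}^{\sO}_{\sPF[1]}$. Here the key point is that $\sPF$ consists of sheaves $F$ with $R^0\overline\psi_\star(R\pi_\star F)=0$, i.e.\ $F$ is entirely ``exceptional'' over $Y$; consequently for any such $F$ the class of $F$ has $\gamma$-component zero, and the support is contained in the (finitely many) exceptional fibres. Thus $\sPF[1]$ lives over a fixed finite union of fibres of $\psi$, the moduli stack $\smPF$ is of finite type in each degree (moduli of one-dimensional sheaves with fixed Hilbert polynomial supported on a fixed projective subscheme), and the Laurent conditions hold trivially because $\gamma=0$ forces $\delta$ and $n$ into a cone/half-line by the same boundedness as before. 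Alternatively one may deduce the Laurent property for $\mathds{1}^{\sO}_{\sPF[1]}$ from that of the other three via Proposition \ref{duality:equality} together with Proposition \ref{Hall:algebra:identity:want:cancel} and the fact (established in the earlier subsection) that $\dd'$ preserves Laurentness, but the direct argument is cleaner.

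The main obstacle I anticipate is not any single one of these verifications but rather the uniformity of the boundedness estimates needed for condition (1) in the stacky context: one must know that, over a fixed exceptional locus in the DM stack $\sX$, the families of (perverse) quotient sheaves of bounded support form a bounded family, so that $\chi$ is bounded below independently of the other data. For schemes this is standard Quot-scheme boundedness; for Calabi-Yau threefold DM stacks it requires the boundedness results for moduli of sheaves on DM stacks (as used in \cite{BCY}, \cite{OS}, \cite{Ba}) and the comparison between $\sX$ and its coarse space $X$ via $\pi$. Granting that input — which is already in force throughout the paper — the four statements reduce to the corresponding ones in \cite{Calabrese}, and the proof is a routine transcription. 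For this reason one may simply cite \cite[Lemma 3.11]{Calabrese} and indicate that the arguments carry over, noting only the replacement of scheme-theoretic boundedness by its DM-stack analogue.
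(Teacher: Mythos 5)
Your treatment of $\mathds{1}_{\sPF[1]}$ and $\mathds{1}^{\sO}_{\sPF[1]}$ is essentially the paper's: objects of $\sPF$ are supported on the exceptional locus, so their classes have $\gamma=0$ with $n$ bounded below and $\delta$ bounded above, and for a fixed class the family is bounded (the paper phrases this boundedness via Riemann--Roch, following Prop.~3.13 of \cite{Calabrese}); the framed version is handled by the finite-dimensionality of $\Hom(\sO_{\sX},F[1])$. So this half of your proposal is fine.

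The divergence, and the genuine gap, is in the two Hilbert-scheme elements. Finite-typeness of each graded piece $\Hilb^{\alpha}_{\leq 1}(\sX)$ is indeed the easy half (\cite{OS}, \cite{BCY}); the real content is the two Laurent conditions, and your justification of them does not stand as written. Condition (2) requires that for fixed $\gamma$ \emph{and} $n$ the exceptional component $\delta$ of the class of a quotient of $\sO_{\sX}$ be bounded above; your statement that $\delta$ ``is bounded above by the part of $[F]$ supported on the exceptional fibre of a fixed representative'' is vacuous, since $\delta$ \emph{is} that part, and a priori nothing excludes quotients with arbitrarily thick multiple structure along the contracted curve realizing the same $n$ --- ruling this out is exactly the ``detailed analysis'' (using the negativity of the contracted curve, i.e.\ the geometry of $\psi$) that Calabrese carries out for the perverse Hilbert scheme. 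Your argument for condition (1) is circular: the lower bound $n\geq n(\gamma,L)$ must be uniform over \emph{all} $\delta$, whereas you fix ``the finitely many relevant $\delta$'' first, which presupposes condition (2). The paper sidesteps the direct argument for $\srH_{\leq 1}$ altogether: it establishes Laurentness of $\mathds{1}_{\sPF[1]}$, $\mathds{1}^{\sO}_{\sPF[1]}$ and of $\srPH_{\leq 1}$ (the latter by the finite-type analysis for fixed $\gamma,n$ with $\delta$ varying, as in Prop.~3.14 of \cite{Calabrese}), and then deduces Laurentness of $\srH_{\leq 1}$ from the already-proved identity $\srPH_{\leq 1}\star\mathds{1}_{\sPF[1]}=\mathds{1}^{\sO}_{\sPF[1]}\star\srH_{\leq 1}$ of Theorem \ref{main:Hall:algebra:identity}, using that Laurent subsets are closed under the operations of the graded product. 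To repair your proposal you must either supply the multiplicity--versus--$\chi$ estimate along the exceptional locus for both $\srH_{\leq 1}$ and $\srPH_{\leq 1}$, or prove it for $\srPH_{\leq 1}$ and recover $\srH_{\leq 1}$ from the identity as the paper does; also note that the relevant results of \cite{Calabrese} are Propositions 3.13--3.15, not Lemma 3.11.
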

\begin{proof}
The proof of the result is very similar to Proposition 3.13, 3.14, and 3.15 of \cite{Calabrese}.

The Laurentness of $\mathds{1}_{\sPF[1]},\quad \mathds{1}^{\sO}_{\sPF[1]}$ is from the fact that once fixing numerical data $(\gamma,\delta,n)$, Riemann-Roch tells us that the subset $\alpha$ is bounded. 
That the element $\srPH_{\leq 1}$ is Laurent comes from a detail analysis that once we fix $\gamma, n$, varying 
$\delta$ then the corresponding perverse Hilbert scheme is of finite type.  The case of $\srH_{\leq 1}$ is from the Hall algebra identity:
$$\srPH_{\leq 1}\star \mathds{1}_{\sPF[1]}=\mathds{1}^{\sO}_{\sPF[1]}\star  
\srH_{\leq 1}$$ 
in Theorem \ref{main:Hall:algebra:identity}.
\end{proof}

\textbf{Duality functor revisited.}
Recall the duality functor in  (\ref{duality:functor}), and the shifted duality functor 
$\dd^\prime=\dd[1]$.  By Proposition \ref{duality:equality}, 
\begin{equation}\label{Dprime:sending}
T\in \sQT_{\bullet}\mapsto \dd^\prime(T)\in \sPF,
\end{equation}
where $T$ has numerical data $(0,\delta,n)$, while $\dd^\prime(T)$ has numerical data $(0,-\delta,n)$.

\subsection{Proof of the main results.}

First we have the following Hall algebra identity from Proposition \ref{Hall:algebra:identity:want:cancel}:
\begin{equation}\label{Hall:algebra:identity:need}
\srPH_{\leq 1}\star \mathds{1}_{\sPF[1]}=\mathds{1}_{\sPF[1]}\star \dd^\prime(\srH_{\exc}^{\#})\star 
\srH_{\leq 1}.
\end{equation}
We need to cancel $\mathds{1}_{\sPF[1]}$ in (\ref{Hall:algebra:identity:need}).  The elements 
$\srPH_{\leq 1}, \quad \srH_{\exc}^{\#}, \quad \srH_{\leq 1}$ are all regular in $H_{\ssc}(\sPA_{\leq 1})$, but 
$\mathds{1}_{\sPF[1]}$ is not. To overcome this difficulty, we us Joyce's stability result, as done by Bridgeland \cite{Bridgeland11} and \cite{Calabrese}.  Recall that elements in $\sPF[1]$ will have numerical data 
$(0,\delta,n)$, for $n\geq 0$.  We need the fact 
$$(\ll-1)\cdot \log(\mathds{1}_{\sPF[1]})\in H_{\reg}(\sPA_{\leq 1}),$$
which can be done by introducing stability condition on the objects that have numerical data $(0,\delta,n)$. 
This means that we work in the category $\sPA_{\exc}$. 
Define a stability condition $\mu$ by:
\[
(0,\delta,n)\mapsto
\begin{cases}
1, & \delta\geq 0;\\
2, & \delta<0.
\end{cases}
\]
The stability condition $\mu$ is a weak stability condition in sense of Definition 3.5 of \cite{JS}. 

\begin{lem}
The set of $\mu$-semistable objects of slope 
$\mu=2$ is $\sPF[1]$, and the set of $\mu$-semistable objects of slope 
$\mu=1$ is $\sPT_{\exc}$.
\end{lem}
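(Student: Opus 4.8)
The plan is to characterize $\mu$-semistability directly from the definition of the weak stability condition $\mu$, exploiting the structure of the torsion pair $(\sPF[1], \sPT)$ inside $\sPA_{\exc}$. First I would record that every object $E \in \sPA_{\exc}$ fits into an exact sequence $F[1] \hookrightarrow E \twoheadrightarrow T$ with $F[1] \in \sPF[1]$ and $T \in \sPT_{\exc}$, by the torsion pair property; moreover $\sPF[1]$ consists exactly of objects with numerical data $(0,\delta,n)$ having $\delta < 0$ (hence slope $\mu = 2$), while $\sPT_{\exc}$ consists of objects with $\delta \geq 0$ (hence slope $\mu = 1$). This is the content I would extract from the earlier discussion of $\sPF[1]$ carrying data $(0,\delta,n)$ with $n \geq 0$ and from the sign behavior of $\dd^\prime$ in \eqref{Dprime:sending}, together with the fact that $F \in \sPF$ forces negative curve class after the shift.

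Next I would prove the two inclusions. For slope $2$: if $E$ is $\mu$-semistable of slope $2$, then the quotient $T \in \sPT_{\exc}$ in the canonical sequence has slope $\mu(T) \leq 1 < 2$ unless $T = 0$; since $\mu$ is a weak stability condition, a semistable object of slope $2$ admits no quotient of strictly smaller slope, so $T = 0$ and $E \in \sPF[1]$. Conversely every $E \in \sPF[1]$ has all subobjects and quotients again in $\sPF[1]$ (as $\sPF$ is closed under sub and quotient in $\sPA$, which is part of its being the torsion-free class), hence of slope $2$, so $E$ is automatically semistable. For slope $1$: if $E$ is $\mu$-semistable of slope $1$, then the sub-object $F[1] \in \sPF[1]$ has slope $\mu = 2 > 1$, contradicting semistability unless $F[1] = 0$, so $E \in \sPT_{\exc}$; conversely an object of $\sPT_{\exc}$ has no sub-object of slope $2$ — any such sub-object would be a nonzero object of $\sPF[1] \cap \sPT = 0$ — and all its quotients lie in $\sPT_{\exc}$ as well since $\sPT$ is a torsion class, so it has slope $1$ throughout and is semistable.

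The step I expect to be the main obstacle is the bookkeeping that identifies slopes with the sign of $\delta$ and verifies that all the relevant sub/quotient objects stay inside $\sPA_{\exc}$ rather than merely $\sPA_{\leq 1}$ — i.e. that restricting to the exceptional locus is preserved under the operations used. For this I would invoke the closure of $\sPT_{\exc} = \sPT \cap \sPA_{\exc}$ and of $\sPF[1]$ under the extensions and quotients appearing, using that $R\psi_\star$ is exact on the relevant complexes so that the condition $\dim \Supp R\psi_\star(-) = 0$ passes to subquotients. I would also note the compatibility check that the canonical sequence $F[1] \hookrightarrow E \twoheadrightarrow T$ of an object in $\sPA_{\exc}$ has both outer terms in $\sPA_{\exc}$, which follows because $\dim\Supp R\psi_\star$ of a sub or quotient is bounded by that of $E$. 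Once these closure statements are in hand the semistability argument is the short two-sided comparison above, essentially identical to the corresponding step in \cite{Bridgeland11} and Lemma~3.12 of \cite{Calabrese}, so I would state it concisely and refer there for the routine verifications.
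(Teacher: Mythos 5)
Your argument is essentially the paper's: decompose a semistable object via the torsion pair $(\sPF[1],\sPT)$ inside $\sPA$ and note that a nonzero subobject of slope $2$ together with a nonzero quotient of slope $1$ violates $\mu(P')\leq\mu(P/P')$, so semistable objects are torsion or torsion-free, with the converse following from the slope bookkeeping. One small imprecision: in the tilt $\sPA$ the class $\sPF[1]$ is the \emph{torsion} part (closed under quotients, not subobjects) and $\sPT$ is the torsion-free part (closed under subobjects, not quotients), but your converse only actually needs quotient-closure of $\sPF[1]$ and the minimality of slope $1$, so the conclusion stands.
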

\begin{proof}
An object $P$ is said to be semistable if for all proper subobjects $P^\prime\subset P$ we have
$\mu(P^\prime)\leq \mu(P/P^\prime)$.  If 
$P$ is any semistable object, we have the torsion and torsion-free exact sequence:
$$F[1]\hookrightarrow P\twoheadrightarrow T$$
where $F\in\sPF$, $T\in\sPT_{\leq 1}$. If 
$F[1]\neq 0$ and $T\neq 0$, then 
$2=\mu(F[1])\leq \mu(T)=1$ which is impossible. So it must be torsion or torsion free. 
\end{proof}

As in \cite[Proposition 3.18]{Calabrese}, the stability condition $\mu$ is permissible in sense of \cite[Definition 4.7]{Joyce207}. 
The following result is Theorem 6.3, Corollary 6.4 in \cite{Bridgeland11}, Proposition 3.20 of \cite{Calabrese}:
\begin{prop}\label{automorphism:Hall:algebra}
In the complete Hall algebra $H(\sPA_{\leq 1})_{\Lambda}$, we have:
$$\mathds{1}_{\sPF[1]}=\exp(\epsilon),$$
with $\eta=(\ll-1)\cdot \epsilon\in H_{\reg}(\sPA_{\leq 1})_{\Lambda}$ a regular element.  Here the element 
$\epsilon$ is $\log(\mathds{1}_{\sPF[1]})$. The automorphism:
$$\Ad_{\mathds{1}_{\sPF[1]}}: H(\sPA_{\leq 1})_{\Lambda}\to H(\sPA_{\leq 1})_{\Lambda}$$
preserves regular elements and the induced Poisson automorphism of 
$H_{\ssc}(\sPA_{\leq 1})_{\Lambda}$ is given by:
$$\Ad_{\mathds{1}_{\sPF[1]}}=\exp\{\eta,-\}.$$
\end{prop}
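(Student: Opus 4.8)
The plan is to follow the strategy of Bridgeland \cite{Bridgeland11} (Theorems 6.3 and 6.4) and Calabrese \cite{Calabrese} (Proposition 3.20) essentially verbatim, since the category $\sPA_{\leq 1}$ behaves just like $\Coh_{\leq 1}$ of a threefold for these purposes. First I would invoke the stability condition $\mu$ defined just above: by the preceding Lemma, the $\mu$-semistable objects of slope $2$ are exactly $\sPF[1]$, so $\mathds{1}_{\sPF[1]}$ is the characteristic stack function of a $\mu$-semistable class. The key structural input is that $\mu$ is permissible in the sense of \cite[Definition 4.7]{Joyce207}, which was noted just above (following \cite[Proposition 3.18]{Calabrese}); permissibility guarantees that the relevant moduli stacks are of finite type once the numerical class is fixed, so that the Laurent/completion machinery of the previous subsection applies and $\mathds{1}_{\sPF[1]}$ and $\log(\mathds{1}_{\sPF[1]})$ genuinely lie in $H(\sPA_{\leq 1})_\Lambda$.

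Next I would define $\epsilon := \log(\mathds{1}_{\sPF[1]})$, computed via the formal series $\epsilon = \sum_{n\geq 1}\frac{(-1)^{n-1}}{n}(\mathds{1}_{\sPF[1]} - 1)^{\star n}$ inside the complete algebra $H(\sPA_{\leq 1})_\Lambda$; this converges because $\mathds{1}_{\sPF[1]}-1$ is supported away from the zero class and the Laurent condition controls the convolution powers. The heart of the argument is showing $\eta := (\ll-1)\cdot\epsilon$ is regular, i.e. lies in $H_{\reg}(\sPA_{\leq 1})_\Lambda$. This is exactly Joyce's "no-poles" theorem: because $\sPF[1]$ is the class of semistable objects for the permissible weak stability condition $\mu$, the logarithm $\epsilon$, a priori only an element of $H(\sPA_{\leq 1})_\Lambda$ with a potential simple pole in $(\ll-1)$, in fact has at worst a simple pole, so multiplying by $(\ll-1)$ lands in the regular subalgebra. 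One applies this componentwise in each numerical degree $(0,\delta,n)$, using that only finitely many such classes contribute below a fixed bound by the Laurent property. I expect this regularity statement to be the main obstacle — not because the proof is new, but because one must carefully check that the stacks of $\mu$-semistable perverse objects with fixed class are finite type and satisfy the hypotheses of Joyce's theorem in the orbifold/perverse setting; this reduces to the permissibility already asserted, together with the fact (noted earlier) that objects of $\sPA$ are semi-Schur, $\Ext^{i}(E,E)=0$ for $i<0$.

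Finally, for the statement about the adjoint automorphism: since $\mathds{1}_{\sPF[1]} = \exp(\epsilon)$ and $\epsilon = \eta/(\ll-1)$ with $\eta$ regular, conjugation $\Ad_{\mathds{1}_{\sPF[1]}} = \exp(\ad_\epsilon)$ where $\ad_\epsilon(x) = \epsilon\star x - x\star\epsilon$. One has $\epsilon\star x - x\star\epsilon = (\ll-1)\cdot\frac{\epsilon\star x - x\star\epsilon}{\ll-1} = \{\eta/(\ll-1)\cdot(\ll-1),-\}$ — more precisely, writing $\ad_\epsilon(x) = \frac{1}{\ll-1}\bigl(\eta\star x - x\star\eta\bigr)$ and recalling the Poisson bracket $\{f,g\} = (f\star g - g\star f)/(\ll-1)$, one gets $\ad_\epsilon = \{\eta,-\}$ as operators, and since $\eta$ is regular the bracket $\{\eta,-\}$ preserves $H_{\reg}(\sPA_{\leq 1})_\Lambda$; hence so does $\Ad_{\mathds{1}_{\sPF[1]}} = \exp(\ad_\epsilon)$, and the induced operator on the semiclassical quotient $H_{\ssc}(\sPA_{\leq 1})_\Lambda$ is $\exp\{\eta,-\}$. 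This last paragraph is a formal manipulation in the complete filtered Poisson algebra and should follow directly once the regularity of $\eta$ is in hand; the only care needed is to confirm that $\exp(\ad_\epsilon)$ and $\exp\{\eta,-\}$ both converge in the $\Lambda$-adic topology, which again is the Laurent property of the previous subsection.
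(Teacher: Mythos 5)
Your proposal is correct and follows exactly the route the paper itself takes: the paper offers no independent argument but simply cites Bridgeland (Theorem 6.3, Corollary 6.4) and Calabrese (Proposition 3.20), whose content is precisely what you spell out — permissibility of $\mu$ feeding into Joyce's no-poles theorem to get regularity of $\eta=(\ll-1)\log(\mathds{1}_{\sPF[1]})$, followed by the formal identity $\Ad_{\exp(\epsilon)}=\exp(\mathrm{ad}_\epsilon)$ with $\mathrm{ad}_\epsilon=\{\eta,-\}$ descending to the semiclassical quotient. Nothing further is needed.
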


\begin{thm}
We have:
$$\pDT(\sX/Y)=I(\dd^\prime(\srH_{\exc}^{\#}))\cdot \DT(\sX). $$
\end{thm}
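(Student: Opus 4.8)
The plan is to start from the Hall algebra identity~(\ref{Hall:algebra:identity:need}), namely
$$\srPH_{\leq 1}\star \mathds{1}_{\sPF[1]}=\mathds{1}_{\sPF[1]}\star \dd^\prime(\srH_{\exc}^{\#})\star \srH_{\leq 1},$$
and to cancel the factor $\mathds{1}_{\sPF[1]}$ on both sides so that we are left with $\srPH_{\leq 1}=\mathds{1}_{\sPF[1]}\star \dd^\prime(\srH_{\exc}^{\#})\star \srH_{\leq 1}\star \mathds{1}_{\sPF[1]}^{-1}$, or more usefully with $\srPH_{\leq 1}=\Ad_{\mathds{1}_{\sPF[1]}}\!\big(\dd^\prime(\srH_{\exc}^{\#})\star \srH_{\leq 1}\big)$ after commuting. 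Rather than inverting $\mathds{1}_{\sPF[1]}$ directly, I would conjugate: rewrite the identity as
$$\Ad_{\mathds{1}_{\sPF[1]}^{-1}}(\srPH_{\leq 1}) = \dd^\prime(\srH_{\exc}^{\#})\star \srH_{\leq 1}$$
in the complete Hall algebra $H(\sPA_{\leq 1})_{\Lambda}$. By Proposition~\ref{automorphism:Hall:algebra}, the automorphism $\Ad_{\mathds{1}_{\sPF[1]}}$ preserves regular elements, and since $\srPH_{\leq 1},\ \srH_{\exc}^{\#},\ \srH_{\leq 1}$ are all regular (hence lie in $H_{\reg}(\sPA_{\leq 1})_{\Lambda}$), both sides descend to the semi-classical Hall algebra $H_{\ssc}(\sPA_{\leq 1})_{\Lambda}$. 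There, Proposition~\ref{automorphism:Hall:algebra} tells us that the induced Poisson automorphism is $\exp\{\eta,-\}$ with $\eta=(\ll-1)\epsilon$; but $\eta$ has numerical data supported in classes of the form $(0,\delta,n)$ coming from $\sPF[1]$, so its Euler pairing against any class is computed by $\chi$, which vanishes on the relevant subspace after applying the integration map — this is the standard mechanism by which the conjugation becomes trivial on the target commutative ring. Concretely: apply $I_\Lambda$ to both sides. Since $I$ is a Poisson algebra homomorphism to the commutative ring $\cc_\sigma[\pDelta]_\Lambda$, and the adjoint action by $\mathds{1}_{\sPF[1]}$ maps under $I$ to an automorphism of $\cc_\sigma[\pDelta]_\Lambda$ of the form $\exp\{I(\eta),-\}$ which acts trivially because $\{x^{\alpha},x^{\beta}\}$ is governed by $\chi(\alpha,\beta)$ and the classes in question pair to zero, we obtain
$$I(\srPH_{\leq 1}) = I(\dd^\prime(\srH_{\exc}^{\#}))\cdot I(\srH_{\leq 1}).$$

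The second main step is to identify each of the three integration-map values with a generating function. For $\srH_{\leq 1}$: it is the Hall algebra element of the Hilbert scheme $\Hilb_{\leq 1}(\sX)\to \sM_{\leq 1}$, so by the defining property of the integration map in Theorem~\ref{thm:5.2} (applied componentwise in the K-class $\alpha$), $I(\srH_{\leq 1})=\sum_{\alpha\in F_1 K(\sX)}\chi(\Hilb^\alpha(\sX),\nu_H)\,x^\alpha = \DT(\sX)$, after the usual identification of $q^\alpha$ with $x^\alpha$. Likewise $I(\srPH_{\leq 1})=\sum_\alpha \chi({}^p\!\Hilb^\alpha(\sX/Y),\nu_{pH})\,x^\alpha = \pDT(\sX/Y)$, using the perverse Behrend function on the perverse Hilbert scheme. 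The remaining factor $I(\dd^\prime(\srH_{\exc}^{\#}))$ is left as is, matching the statement of the theorem. So the identity to be proved reads $\pDT(\sX/Y) = I(\dd^\prime(\srH_{\exc}^{\#}))\cdot \DT(\sX)$, exactly as claimed.

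The step I expect to be the main obstacle is the passage from the Hall algebra identity to the semi-classical identity — specifically, justifying that the conjugation by $\mathds{1}_{\sPF[1]}$ disappears after integration. This requires (i) that all the elements involved actually lie in the completed \emph{regular} subalgebra, which rests on the Laurentness statements (the preceding proposition) and on $\eta=(\ll-1)\epsilon\in H_{\reg}(\sPA_{\leq 1})_\Lambda$ from Proposition~\ref{automorphism:Hall:algebra}; and (ii) a careful bookkeeping of numerical classes showing that the Poisson bracket $\{I(\eta),-\}$ kills the relevant generators, i.e.\ that the Euler form $\chi(\alpha,\beta)$ vanishes whenever $\alpha$ ranges over the support of $\eta$ (classes $(0,\delta,n)$ with $f_\star\delta=0$) and $\beta$ over the support of $I(\srPH_{\leq 1})$ or its conjugate. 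This is the analogue of Bridgeland's and Calabrese's "the wall-crossing term is central" argument, and I would carry it out by the same route: take $\log$ of both sides of~(\ref{Hall:algebra:identity:need}) in $H(\sPA_{\leq 1})_\Lambda$, use the Baker–Campbell–Hausdorff formula together with regularity to reduce modulo $(\ll-1)$ to a Poisson-bracket identity in $H_{\ssc}$, apply $I_\Lambda$, and then observe that the image of the bracketed correction term vanishes in $\cc_\sigma[\pDelta]_\Lambda$ for numerical reasons. The rest — identifying $I(\srH_{\leq 1})$, $I(\srPH_{\leq 1})$ with $\DT(\sX)$, $\pDT(\sX/Y)$ — is a routine application of Theorem~\ref{thm:5.2}.
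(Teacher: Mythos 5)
Your proposal follows essentially the same route as the paper: start from the identity of Proposition \ref{Hall:algebra:identity:want:cancel}, remove $\mathds{1}_{\sPF[1]}$ via Proposition \ref{automorphism:Hall:algebra} (writing the conjugation as $\exp\{\eta,-\}$ with $\eta=(\ll-1)\epsilon$ regular), apply $I_\Lambda$ noting the bracket term dies in the commutative target, and identify $I_\Lambda(\srH_{\leq 1})=\DT(\sX)$ and $I_\Lambda(\srPH_{\leq 1})=\pDT(\sX/Y)$. This matches the paper's proof (which is stated more tersely, simply asserting that the Poisson bracket becomes trivial under the integration map), so the proposal is correct and not a genuinely different argument.
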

\begin{proof}
From the Hall algebra identity 
$$
\srPH_{\leq 1}\star \mathds{1}_{\sPF[1]}=\mathds{1}_{\sPF[1]}\star \dd^\prime(\srH_{\exc}^{\#})\star 
\srH_{\leq 1}.$$ 
in (\ref{Hall:algebra:identity:need}) and Proposition \ref{automorphism:Hall:algebra},  we have the equation:
$$\srPH_{\leq 1}=\dd^\prime(\srH_{\exc}^{\#})\cdot \exp\{\eta,-\}\cdot \srH_{\leq 1}.$$
So when applying the integration map and note that the Poisson bracket is trivial when applying the integration map we have:
$$I_{\Lambda}(\srPH_{\leq 1})=I_{\Lambda}(\dd^\prime(\srH_{\exc}^{\#}))\cdot I_{\Lambda}(\srH_{\leq 1}).$$
Hence the result follows, due to $I_{\Lambda}(\srH_{\leq 1})=\DT(\sX)$. 
\end{proof}

\begin{cor}
We have:
$$\pDT(\sX/Y)=\frac{\DT^{\vee}_{\exc}(\sX)}{\DT_0(\sX)}\cdot \DT(\sX).$$
\end{cor}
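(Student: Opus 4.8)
The plan is to derive the corollary from the preceding theorem, which already gives $\pDT(\sX/Y)=I(\dd'(\srH_{\exc}^{\#}))\cdot\DT(\sX)$; what remains is the purely curve-counting identification
$$I(\dd'(\srH_{\exc}^{\#}))=\frac{\DT^{\vee}_{\exc}(\sX)}{\DT_0(\sX)}.$$
I would prove this in three steps: (i) evaluate $I(\srH_{\exc}^{\#})$ as an exceptional PT partition function; (ii) read off the effect of the shifted duality $\dd'$ on the integration map; (iii) invoke Bayer's DT/PT correspondence for Calabi--Yau threefold DM stacks in order to replace PT by DT.

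For step (i): the element $\srH_{\exc}^{\#}$ is represented by the open subscheme $\Hilb^{\#}_{\exc}(\sX)\subset\Hilb^{\#}_{\leq 1}(\sX)$ parametrising stable pairs $\sO_{\sX}\to F$ with $F\in\sPT_{\bullet}$, i.e. PT stable pairs supported on the exceptional locus; since $\sPT_{\bullet}=\sPT_{\exc}\cap\sQ_{\exc}$, the sheaf $R\psi_{\star}F$ is zero-dimensional and the corresponding curve class is contracted by $\psi$. Because this is a genuine scheme, hence a regular element, the integration map gives directly
$$I(\srH_{\exc}^{\#})=\sum_{\beta}\chi\bigl(\Hilb^{\#}_{\exc,\beta}(\sX),\nu_{\PT}\bigr)\,q^{\beta}=\sum_{\beta\,:\,f_{\star}\beta=0}\PT_{\beta}(\sX)\,q^{\beta}=:\PT_{\exc}(\sX),$$
the middle equality being the very definition of the PT invariant. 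If one prefers to argue inside the Hall algebra, the same output is obtained by applying $I$ to the identity $\mathds{1}_{\sPT_{\bullet}}^{\sO}=\srH_{\exc}^{\#}\star\mathds{1}_{\sPT_{\bullet}}$ proved above, after rendering $\mathds{1}_{\sPT_{\bullet}}$ regular by the weak-stability device of Proposition \ref{automorphism:Hall:algebra} and cancelling the common factor, exactly as in the proof of the theorem.

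For step (ii): since $\sX$ is a smooth Calabi--Yau threefold stack, $\dd'=\dd[1]$ is an anti-autoequivalence of $D^{b}(\sX)$ which induces an isomorphism of the relevant moduli stacks exchanging an object with its dual, under which the symmetric obstruction theories are carried to their transposes and hence the Behrend functions correspond pointwise; therefore $\dd'$ commutes with the weighted Euler characteristics computed by the integration map. By (\ref{Dprime:sending}) it acts on the numerical grading by $(0,\delta,n)\mapsto(0,-\delta,n)$, i.e. it inverts the curve-class variables while fixing the $\Ch_3$-variable. Combining with step (i),
$$I(\dd'(\srH_{\exc}^{\#}))=\PT^{\vee}_{\exc}(\sX):=\sum_{\beta\,:\,f_{\star}\beta=0}\PT_{-\beta}(\sX)\,q^{-\beta}.$$

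For step (iii): by Bayer's DT/PT correspondence for Calabi--Yau threefold DM stacks \cite{Ba}, the factorisation $\DT(\sX)=\DT_0(\sX)\cdot\PT(\sX)$ is compatible with the curve-class grading, and restricting to classes contracted by $\psi$ it becomes $\DT_{\exc}(\sX)=\DT_0(\sX)\cdot\PT_{\exc}(\sX)$; since $\DT_0(\sX)$ is concentrated in curve-class zero and involves only non-negative powers of the $\Ch_3$-variable, it is left untouched by the substitution of step (ii), so that $\PT^{\vee}_{\exc}(\sX)=\DT^{\vee}_{\exc}(\sX)/\DT_0(\sX)$. Feeding this back into the theorem yields the asserted formula. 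I expect step (ii) to be the only genuinely delicate point: one must be careful that $\dd'$ inverts exactly the curve-class directions and fixes the $\Ch_3$-direction, so that $\DT_{\exc}$ is dualised to $\DT^{\vee}_{\exc}$ while the $\DT_0$ in the denominator is preserved rather than dualised; the remaining two steps are essentially definitional once Bayer's theorem (or, equivalently, a repetition of Bridgeland's Hall-algebra argument restricted to the exceptional locus) is granted.
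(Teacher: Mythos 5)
Your proposal is correct and follows essentially the same route as the paper: starting from the theorem, identifying $I_{\Lambda}(\srH_{\exc}^{\#})=\PT_{\exc}(\sX)$, using the shifted duality to pass to $\PT_{\exc}^{\vee}(\sX)$, and invoking Bayer's DT/PT correspondence to rewrite this as $\DT^{\vee}_{\exc}(\sX)/\DT_0(\sX)$. Your step (ii), spelling out why the duality inverts only the curve-class variables so that the denominator $\DT_0(\sX)$ is not dualised, is in fact more careful than the paper's brief assertion $\PT_{\exc}^{\vee}(\sX)=\DT_{\exc}^{\prime,\vee}(\sX)=\DT^{\vee}_{\exc}(\sX)/\DT_0(\sX)$, but it is the same argument in substance.
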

\begin{proof}
We need to use A. Bayer's DT/PT-correspondence for Calabi-Yau orbifolds in \cite{Ba}.  In \cite{Ba}, Bayer proves that 
$$\DT^{\prime}(\sX)=\PT(\sX).$$
Hence 
$$\DT_{\exc}^{\prime}(\sX)=\PT_{\exc}(\sX).$$
Since $I_{\Lambda}(\srH_{\exc}^{\#})=\PT_{\exc}(\sX)$, 
$$I_{\Lambda}(\dd^\prime(\srH_{\exc}^{\#}))=\PT_{\exc}^{\vee}(\sX).$$
The result follows since $\PT_{\exc}^{\vee}(\sX)=\DT_{\exc}^{\prime,\vee}(\sX)=\frac{\DT^{\vee}_{\exc}(\sX)}{\DT_0(\sX)}$.
\end{proof}

\subsection*{Proof of Theorem \ref{main:flop:introduction}}
\begin{proof}
For an orbifold flop 
$$
\xymatrix{
\sX\ar@{-->}[rr]^{\phi}\ar[dr]&&\sX^\prime\ar[dl]\\
&Y&
}
$$
we have an equivalence:
$$\Phi: D^b(\sX)\to D^b(\sX^\prime)$$
which is given by the Fourier-Mukai transformation.  Moreover
$$\Phi(\Per(\sX/Y))=\Per(\sX^\prime/Y).$$
Then on the Hall algebra $H_{\ssc}(\sPA_{\leq 1})$, we have 
$$\Phi(\srPH_{\leq 1}(\sX))=\srPH_{\leq 1}(\sX^\prime).$$
So $\Phi_{\star}(\pDT(\sX/Y))=\pDT(\sX^\prime/Y)$.
\end{proof}

\section{Discussion on the Hard Lefschetz condition.}\Label{section:HLcondition}

In this section we give a short discussion on the Hard Lefschetz (HL) condition for orbifold flops. 

Proposition \ref{HL:condition} tells us that an orbifold flop $\phi: \sX\dasharrow \sX^\prime$ of  type 
$(a_1, a_2; b_1,b_2)$ satisfies the HL condition if and only if $a_i=b_i$ for $i=1,2$. Our result in Theorem \ref{main:flop:introduction}
may tells the DT-invariants for $\sX$ that does not satisfy the HL condition. 

\begin{cor}
Let $\phi: \sX\dasharrow \sX^\prime$ be an orbifold flop of type $(\mathbf{a}, \mathbf{b})$ satisfying the HL condition. Then 
$$\Phi(\DT(\sX)\cdot \DT^\vee_{\exc}(\sX))= \DT(\sX^\prime)\cdot \DT^\vee_{\exc}(\sX^\prime).$$
\end{cor}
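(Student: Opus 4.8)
The plan is to specialize the main theorem, Theorem~\ref{main:flop:introduction}, to the HL case and observe that the ``correction factors'' $\DT_0$ and the degree-zero invariants are flop-invariant, so they can be cancelled from both sides. Recall from Proposition~\ref{HL:condition} that the HL condition for an orbifold flop of type $(\mathbf{a},\mathbf{b})$ means $a_i=b_i$ for $i=0,1$ after reordering; in particular the exceptional loci $Z\cong\bP(\mathbf{a})/\mu_n$ and $Z'\cong\bP(\mathbf{b})/\mu_n$ are isomorphic as stacks, and the normal bundle data is symmetric. The starting point is the identity
$$\Phi_{\star}\!\left(\DT(\sX)\cdot \frac{\DT^\vee_{\exc}(\sX)}{\DT_0(\sX)}\right)=\DT(\sX^\prime)\cdot \frac{\DT^\vee_{\exc}(\sX^\prime)}{\DT_0(\sX^\prime)}.$$

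The key step is to show $\Phi_{\star}(\DT_0(\sX)) = \DT_0(\sX^\prime)$. This follows because the degree-zero DT partition function depends only on the contribution of zero-dimensional subschemes (sheaves supported in dimension $0$), which lives away from the flopping locus in the sense that $F_0K(\sX)$ is identified with $F_0K(\sX^\prime)$ under $\varphi$, and moreover the classical computation $\DT_0(\sX)=M(-q)^{\chi(\sX)}$-type formula (the orbifold analogue, as in \cite{BCY}) depends only on invariants of $\sX$ that are preserved under the flop---in particular the Euler characteristic and the orbifold structure, since $\psi,\psi'$ are isomorphisms in codimension one and $Y$ has only isolated singularities. Concretely, one argues that a crepant birational map that is an isomorphism outside a finite set of points preserves the relevant stringy/orbifold Euler-characteristic data, hence $\DT_0$ matches after applying $\Phi_{\star}$. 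Given this, one cancels $\DT_0$ from both sides of the displayed identity, yielding exactly
$$\Phi(\DT(\sX)\cdot \DT^\vee_{\exc}(\sX))= \DT(\sX^\prime)\cdot \DT^\vee_{\exc}(\sX^\prime),$$
as claimed.

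I would carry out the steps in this order: first record that in the HL case $a_i = b_i$, so $\pDelta$ and the Laurent structures on $H(\sPA_{\leq 1})$ for $\sX$ and $\sX'$ are compatibly identified under $\Phi_{\star}$; second, invoke Theorem~\ref{main:flop:introduction} verbatim; third, establish $\Phi_{\star}(\DT_0(\sX))=\DT_0(\sX^\prime)$ using the fact that $\psi,\psi'$ are small contractions (isomorphisms in codimension one) so they induce an isomorphism on the loci parametrizing zero-dimensional substacks, and the degree-zero generating function is a universal expression in $\chi_{\mathrm{orb}}(\sX)$ which is a birational invariant here; fourth, divide out. The cancellation is legitimate because $\DT_0(\sX)$ is an invertible element of the completed coefficient ring (its constant term is $1$), so multiplying both sides by $\Phi_{\star}(\DT_0(\sX))=\DT_0(\sX^\prime)$ is valid.

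The main obstacle I anticipate is the rigorous verification that $\Phi_{\star}(\DT_0(\sX)) = \DT_0(\sX')$. One must be careful that $\Phi$ (a Fourier--Mukai transform, not a geometric map) really does send the class of the ideal sheaf of a length-$\ell$ zero-dimensional substack supported away from $E$ to the analogous class on $\sX'$, up to the Behrend-function-weighted count being unchanged; this is plausible since off the exceptional locus $\Phi$ restricts to the identity, but making it precise requires tracking how $\Phi$ acts on $F_0K(\sX)$ and checking the degree-zero perverse Hilbert scheme coincides with the ordinary one in this range (as already noted in the Remark after the definition of $\srH_{\leq 1}$, a quotient of torsion is torsion). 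An alternative, possibly cleaner route is to apply the main theorem and also apply it with the roles of $p$ and $q=-(p+1)$ (equivalently, the flop in the reverse direction $\sX'\dashrightarrow\sX$), and combine the two identities to eliminate $\DT_0$ formally; in the HL case the symmetry $a_i=b_i$ makes the two directions genuinely symmetric, which should force the $\DT_0$ factors to agree. I would present the Euler-characteristic argument as the primary proof and mention the symmetry argument as a remark.
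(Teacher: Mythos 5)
Your overall skeleton is exactly the paper's: quote Theorem \ref{main:flop:introduction}, prove $\Phi_{\star}(\DT_0(\sX))=\DT_0(\sX^\prime)$, and cancel the (invertible, constant term $1$) factor $\DT_0$. The gap is in your justification of the key step. You argue that $\DT_0$ matches because the flop is ``an isomorphism outside a finite set of points'' and because the degree-zero partition function is a universal expression in an Euler-characteristic-type invariant that is preserved by any crepant birational map which is an isomorphism in codimension one. Both claims are off. First, $\phi$ is an isomorphism only outside the one-dimensional exceptional loci $Z\cong\bP(\mathbf{a})/\mu_n$ and $Z'\cong\bP(\mathbf{b})/\mu_n$ (it is $\psi$ that contracts $Z$ to a point), and zero-dimensional substacks supported at the stacky points of $Z$ do contribute to $\DT_0$. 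Second, for an orbifold the degree-zero partition function is not a function of a birationally invariant Euler characteristic alone: by the results of \cite{BCY} it depends on the local orbifold groups (it counts, roughly, representations of the stabilizer groups at the stacky points), and these groups are $\mu_{a_0n},\mu_{a_1n}$ on $\sX$ versus $\mu_{b_0n},\mu_{b_1n}$ on $\sX'$. For a non-HL flop such as $(2,2;1,3)$ (a $B\mu_2$-gerbe over $\pp^1$ versus $\pp(1,3)$) these differ, and there is no reason for $\DT_0(\sX)$ and $\DT_0(\sX')$ to agree --- indeed this is precisely why Theorem \ref{main:flop:introduction} keeps the $\DT_0$ factors. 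As written, your argument never actually uses the HL hypothesis at the point where it is needed, and would ``prove'' the corollary without it.

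The fix is the paper's one-line argument, which you state at the outset but do not deploy: the HL condition forces $a_i=b_i$ (Proposition \ref{HL:condition}), hence $Z\cong Z'$ as stacks and the local orbifold groups along the exceptional loci coincide; since degree-zero DT theory is determined by these local groups, $\Phi_{\star}(\DT_0(\sX))=\DT_0(\sX^\prime)$, and the cancellation you describe then goes through. Your alternative ``symmetry'' route (applying the theorem to the reverse flop and combining) does not by itself eliminate $\DT_0$ either --- the reverse direction gives the inverse of the same identity, so you would still need the equality of the two degree-zero series, which again comes from $a_i=b_i$.
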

\begin{proof}
Theorem \ref{main:flop:introduction} gives the formula:
$$\Phi_{\star}\left(\DT(\sX)\cdot \frac{\DT^\vee_{\exc}(\sX)}{\DT_0(\sX)}\right)=\DT(\sX^\prime)\cdot \frac{\DT^\vee_{\exc}(\sX^\prime)}{\DT_0(\sX^\prime)}.$$
If $\phi$ satisfies the HL condition, then $a_1=b_1, a_2=b_2$.  Hence the local orbifold groups are the same for both 
$\sX$ and $\sX^\prime$.
Degree zero Donaldson-Thomas theory for both $\sX$ and $\sX^\prime$ counts irreducible representations of the local group for  $\sX$ and $\sX^\prime$, then 
$\Phi_{\star}(\DT_0(\sX))=\DT_0(\sX^\prime)$. The Corollary follows. 
\end{proof}

We discuss the case of local picture of orbifold flop of type $(a_1, a_2; b_1,b_2)$ with $\sum_ia_i=\sum_ib_i$. 
Consider the diagram (\ref{diagram1}), 
$\phi: \widetilde{\sX}\dasharrow \widetilde{\sX}^\prime$ is an orbifold flop of type $(a_1, a_2; b_1,b_2)$. 
Both $\widetilde{\sX}$ and 
$\widetilde{\sX}^\prime$ are Calabi-Yau threefold stacks with $A_n$-singularities. 
Bryan, Cadman and Young \cite{BCY} studies the DT-invariants of such Calabi-Yau threefold stacks satisfying the HL condition by the method of  orbifold topology vertex. In some cases, they derived nice formula for the DT-partition functions. 

Our main result implies that using DT-invariants of Calabi-Yau threefold stacks with the HL condition, we may get DT-partition function for Calabi-Yau threefold stacks without the HL conditions. 
\begin{example}
Let $(a_1, a_2; b_1,b_2)=(2,2; 1,3)$. 
Consider 
$$
\sX=\xymatrix{
\sO_{\pp(2,2)}(-1)\oplus \sO_{\pp(2,2)}(-3)\ar@{-->}[rr]^{}\ar[dr]&&\sX^\prime=\sO_{\pp(1,3)}(-2)\oplus \sO_{\pp(1,3)}(-2)\ar[dl]\\
&Y&
}
$$
which is an orbifold flop such that $\sX$ satisfies HL condition, but $\sX^\prime$ does not. 
The stack $\sX$ is a local $B\mu_2$-gerbe over $\pp^1$, and the DT-partition function for $\sX$ was calculated in \cite[\S  4.4]{BCY}.  Our main result Theorem \ref{main:flop:introduction} implies the relationship between the DT-partition function for  $\sX$ and 
the DT-partition function for $\sX^\prime$.
\end{example}

\subsection*{}

\end{document}